\documentclass[a4paper]{article}
\usepackage[utf8]{inputenc}
\usepackage[T1]{fontenc}
\usepackage{amsmath, amssymb, amsthm}
\usepackage[dvipsnames]{xcolor}
\usepackage{graphicx}
\usepackage{csquotes}
\usepackage{tikz} 
\usetikzlibrary{angles,quotes,calc}
\usepackage{pgfplots}
\usepgfplotslibrary{colorbrewer}
\pgfplotsset{compat=1.18}
\usepackage{enumerate}
\usepackage{microtype}
\usepackage{mathdots}
\usepackage{hyperref}
\hypersetup{
  colorlinks,
  citecolor=Blue,
  linkcolor=Fuchsia,
  urlcolor=OliveGreen
}

\newtheorem{thm}{Theorem}[section]
\newtheorem{lem}[thm]{Lemma}
\newtheorem{defn}[thm]{Definition}
\newtheorem{prop}[thm]{Proposition}
\newtheorem{exam}[thm]{Example}
\newtheorem{quest}[thm]{Question}
\newtheorem{cor}[thm]{Corollary}
\newtheorem{rem}[thm]{Remark}

\RequirePackage{zref-clever}
\zcsetup{
    abbrev=true
    }

\newcommand{\Cref}[1]{\zcref[S]{#1}}
\zcRefTypeSetup{thm}{Name-sg = Theorem, name-sg = theorem, Name-pl =
  Theorems, name-pl = theorems}
\zcRefTypeSetup{lem}{ Name-sg = Lemma, name-sg = lemma, Name-pl =
  Lemmas, name-pl = lemmas}
\zcRefTypeSetup{prop}{ Name-sg = Proposition, name-sg = proposition,
  Name-pl = Propositions, name-pl = propositions}
\zcRefTypeSetup{cor}{ Name-sg = Corollary, name-sg = corollary,
  Name-pl = Corollaries, name-pl = corollaries}
\zcRefTypeSetup{defn}{ Name-sg = Definition, name-sg = definition,
  Name-pl = Definitions, name-pl = definitions}
\zcRefTypeSetup{exam}{ Name-sg = Example, name-sg = example, Name-pl =
  Examples, name-pl = examples}
\zcRefTypeSetup{quest}{ Name-sg = Question, name-sg = question,
  Name-pl = Questions, name-pl = questions}
\zcRefTypeSetup{rem}{ Name-sg = Remark, name-sg = remark, Name-pl =
  Remarks, name-pl = remarks}
\AddToHook{env/lem/begin}{\zcsetup{countertype={thm=lem}}}
\AddToHook{env/prop/begin}{\zcsetup{countertype={thm=prop}}}
\AddToHook{env/cor/begin}{\zcsetup{countertype={thm=cor}}}
\AddToHook{env/defn/begin}{\zcsetup{countertype={thm=defn}}}
\AddToHook{env/exam/begin}{\zcsetup{countertype={thm=exam}}}
\AddToHook{env/quest/begin}{\zcsetup{countertype={thm=quest}}}
\AddToHook{env/rem/begin}{\zcsetup{countertype={thm=rem}}}

\newcommand{\norm}[1]{\left\lVert#1\right\rVert}

\DeclareMathOperator{\arccot}{arccot}

\makeatletter
\newcommand{\itemref}[1]{%
  \textup{\hyperref[#1]{\tagform@{\ref*{#1}}}}%
}
\makeatother

\newcommand{\setsep}{\:|\:}

\newcommand{\abs}[1]{|#1|}
\newcommand{\bbR}{\mathbb{R}}
\newcommand{\bbC}{\mathbb{C}}

\newcommand{\bbN}{\mathbb{N}}
\newcommand{\calM}{\mathcal{M}}
\newcommand{\calS}{\mathcal{S}}

\newcommand{\calB}{\mathcal{B}}
\newcommand{\calQ}{\mathcal{Q}}

\newcommand{\Hone}{\mathbb{C}_{<1}}

\DeclareMathOperator{\real}{Re}
\DeclareMathOperator{\imag}{Im}
\DeclareMathOperator{\linSpan}{span}

\newcommand{\eps}{\varepsilon}
\newcommand{\iu}{\mathrm{i}\mkern1mu}
\newcommand{\mat}[1]{\begin{bmatrix}#1\end{bmatrix}}
\newcommand{\mars}{\includegraphics[width=.75em]{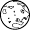}}

\usepackage[
backend=biber,
isbn=false,
giveninits=true,
style=alphabetic,
maxbibnames=99]
{biblatex}
\renewbibmacro{in:}{%
   \ifentrytype{article}{}{\printtext{\bibstring{in}\intitlepunct}}}
\AtEveryBibitem{%
   \clearlist{language}%
}
\newcommand{\xrep}{\xi}
\newcommand{\yimp}{\eta}
\newcommand{\ufunc}{u} 
\newcommand{\xvar}{r}
\newcommand{\yvar}{r}

\title{Moving-Average Iteration of Linear Maps}
\date{\today}
\author{Jeremias Epperlein\thanks{Faculty of Computer Science and Mathematics, University of Passau, Germany}, Fabian Wirth$^*$}

\begin{document}
\maketitle

\begin{abstract}                
We consider moving-average iteration for fixed complex matrices and place the theory in the general context of Krasnoselskii and Mann iteration. Necessary and sufficient spectral conditions for the asymptotic stability of moving-average iteration are presented in dependence on the fixed window width. For the corresponding region of stability in the complex plane explicit formulas are derived and it is shown that the region is open, convex and semi-algebraic. The regions are shown to be strictly increasing in the window length and 
the limiting stability region is determined as the window size tends to infinity.
\end{abstract}

\section{Introduction}
Iteration of functions in general is a cornerstone of computational mathematics. The theory of iterative processes has received significant attention with important concepts introduced in the 1950s, \cite{mannMeanValueMethods1953}, \cite{zbMATH03108780}. The analysis of iterative schemes has remained an active area ever since. In this paper, we address a stability question related to a certain moving-average process for linear maps that will be introduced in \Cref{sec:moving-average-iteration}. We begin by placing our approach into the general framework of iteration.
A continuous function $f: \bbR^d \to \bbR^d$ defines
two classical dynamical systems
which formalize the notion of
\enquote{at time $t$ move from the point $x$ in the direction of the point $f(x)$},
namely, the discrete time system
\begin{equation}
  x_{t+1} = f(x_t) \tag{D}
  \label{eq:discrete-time-system}
\end{equation}
and
the continuous-time system
\begin{align}
  \dot x(t) = f(x(t))-x(t). \tag{C}
  \label{eq:continuous-time-system}
\end{align}
Both systems have the fixed points of $f$ as their equilibria.
Discretisations of the continuous-time system of the form
\begin{align}
  x_{t+\delta} =x_{t} + \delta(f(x_t)-x_t) = \delta f(x_t) + (1-\delta)x_t \tag{P}
  \label{eq:discretisation}
\end{align}
interpolate between the dynamics of \eqref{eq:discrete-time-system} and
\eqref{eq:continuous-time-system}. The discrete-time systems
appearing in \eqref{eq:discretisation} are so-called
\emph{iteration schemes} applied to $f$, which are used
in various optimization and fixed-point-finding schemes to improve the convergence
speed. More precisely, \eqref{eq:discretisation} is an instance of the so-called \emph{Krasnoselskii
iteration scheme}\footnote{Note that we have switched the notation of the "time" index to $k$ now and as the remainder of the paper deals with variants of discrete-time systems, we will stick to this notation.}
\begin{align*}
  x_{k+1} = \omega_k f(x_k)+(1-\omega_k) x_k \tag{K}
  \label{eq:krasno}
\end{align*}
associated to a given sequence $(\omega_k)_{k \in \bbN_0}$ in $[0,1]$.

A further class of iteration schemes is given by the classical Mann iteration \cite{mannMeanValueMethods1953}
defined by
\begin{align}
  x_{k+1} = f\left(\frac{1}{k+1} \sum_{\ell=0}^{k}  x_\ell\right). \tag{M}
  \label{eq:Mann}
\end{align}
Here, instead of applying $f$ to $x_k$ directly,
first the arithmetic average $\overline{x}_k$ of all previous states is computed and then 
$f$ is applied to this average.
Note that the average itself satisfies
\begin{align}
  \overline{x}_{k+1} := \frac{1}{k+2} \sum_{\ell = 0}^{k+1} x_\ell = \frac{1}{k+2} f(\overline{x}_k) + \frac{k+1}{k+2}\
  \overline{x}_k. \tag{M'}
  \label{eq:Mann-Krasnoselskii}
\end{align}
which follows the dynamics of Krasnoselskii iteration
with weight sequence $\omega_k=\frac{1}{k+1}$.
Both \eqref{eq:Mann} and \eqref{eq:Mann-Krasnoselskii}
define non-autonomous dynamical systems to which
there are two natural autonomous approximations.
One of them is the Krasnoselskii iteration
with fixed weights $\omega_k=\frac{1}{n}$,
the other is what we call \emph{moving-average iteration}
\begin{align}
  x_{k+1} = f\left(\frac{1}{m} \sum_{\ell=0}^{m-1} x_{k-\ell}\right)
  \tag{A}\label{eq:moving-average-intro}
\end{align}
\emph{with window width $m \in \bbN$}.

In this paper we study 
iteration schemes for linear maps $f$
defined by a matrix $A \in \bbC^{d\times d}$.
Our original motivation for this came
from \cite{wirthNonhomogeneousPlacedependentMarkov2019}
where a probabilistic version of
moving-average iteration, motivated by the ergodic theorem, appeared in the context
of distributed optimization.

In particular we are interested in the
asymptotic stability of the origin
in the resulting linear systems.
It is well-known that the origin is
asymptotically stable for 
\eqref{eq:discrete-time-system} if
the spectrum $\sigma(A) \subseteq \bbC$
is contained in the open unit ball  $B_1(0)$ 
around $0$,
and that the origin is stable
under \eqref{eq:continuous-time-system}
if $\sigma(A)$ is contained
in $\Hone := \{ \lambda \in \bbC \setsep \real \lambda < 1\}$.
In both cases asymptotic stability of
the origin is characterized by the location
of the spectrum of $A$ in some open region of stability $\calS \subseteq \bbC$.
A simple calculation (see
\Cref{exa:stability-of-konstant-Krasnoselskii}\,(ii))
shows that this is also true
for Krasnoselskii iteration with constant
weight sequence  $\omega_k=\frac{1}{n}$.
Here the region of stability is
$\calS_n := B_n(1-n)$, the open ball of radius $n$ around
  $1-n \in \bbC$.
In particular, the sequence $\calS_n$ is increasing
in $n$ and the union $\bigcup_n \calS_n$
equals $\Hone$,
the region of stability for \eqref{eq:continuous-time-system}.

Under reasonable assumptions, an open region of stability also exists for Krasnoselskii iteration with
non-constant weights,
see \Cref{exa:stability-of-konstant-Krasnoselskii}. However,
we show in \Cref{exam:non-open-stability-region}
that, in general, stability conditions for Krasnoselskii iteration
with non-constant weights
cannot be characterized by spectral conditions.

Our main focus is on the region of stability
$\mars_m$ for moving-average iteration
with window width $m$.
These \emph{moving-average regions of stability} (MARS)
again form a sequence of bounded, convex, open subsets of $\Hone$,
which increase in $m$, but (to our surprise)
\begin{align*}
  \mars := \bigcup_{m \in \bbN} \mars_m
\end{align*}
is not equal to $\Hone$ but instead
equals
\begin{align*}
  \{\xrep+\iu\yimp{} \in \bbC\setsep \xrep{} < \yimp{} \cot \yimp{}, \ \yimp{}\in (-\pi,0) \cup(0,\pi)\}
  \cup (-\infty,1),
\end{align*}
see \Cref{fig:stability-regions}.
\begin{figure}
  \begin{center}
    \includegraphics[scale=0.5]{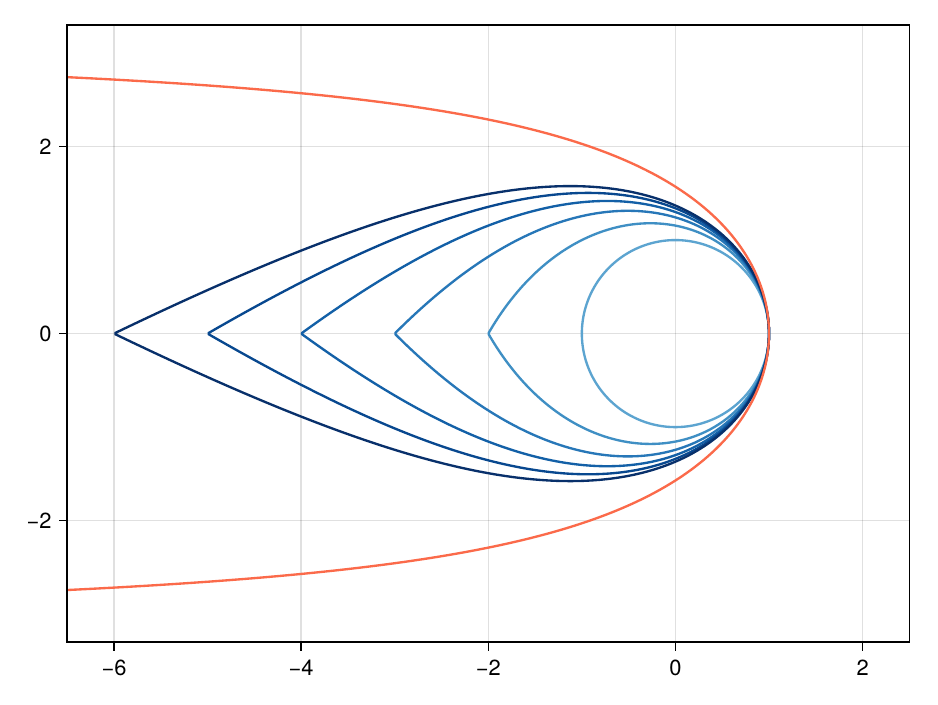}
    \vspace{-0.5cm}
  \end{center}
  \caption{Boundaries of the regions of stability $\mars_1,\dots,\mars_6$ in blue and $\mars$ in
    red}
  \label{fig:stability-regions}
\end{figure}

Iteration schemes appear under various names
throughout mathematics. Under the names used above
they primarily appear in the study of fixed-points
iterations. In particular, these iteration schemes have 
been studied extensively for nonexpansive maps
on convex subsets of Banach spaces.
A general account of this approach is given
in \cite{IterativeApproximationFixed2007} and \cite{bauschkeConvexAnalysisMonotone2017}.
In this line of research the focus is to find
conditions on the maps and the Banach space
to ensure (weak-) convergence
of the iteration schemes,
see for example \cite{reichWeakConvergenceTheorems1979a}.
A great deal is also known about convergence
speeds of these iteration schemes,
for example
using proof mining
\cite{kohlenbachUniformAsymptoticRegularity2003},
probabilistic techniques \cite{cominettiRateConvergenceKrasnoselskiiMann2014a}
or optimal transport
\cite{contrerasOptimalErrorBounds2023}.
Convergence results of this kind are used in such diverse fields 
as image reconstruction \cite{linKrasnoselskiiMannAlgorithmImproved2019a},
optimization \cite{cortildKrasnoselskiiMannIterations2025}
or
game theory \cite{belgioiosoConvergenceDiscreteTimeLinear2018a},
\cite{borgensRegularizedJacobitypeADMMmethods2019}.
A particularly important example 
of such an algorithm is 
the Douglas-Rachford splitting
which consists of the iteration of a convex combination 
of the identity and a composition of reflection operators,
see for example \cite{giselssonLinearConvergenceMetric2017}
or \cite{kanzowGeneralizedKrasnoselskiiMann2017}.
Similar ideas appear in various numerical schemes under the name \emph{under-relaxation} or \emph{damping}.
Up to now, the case of linear maps on the other hand, with the exception of
\cite{belgioiosoConvergenceDiscreteTimeLinear2018a}, has 
received surprisingly little attention in the literature
on iteration schemes.

For linear $f$, we can also
move the sum out of the function in \eqref{eq:moving-average-intro} and write the recursion as
\begin{align*}
    x_{k+m}-\sum_{\ell=0}^{m-1} \frac{1}{m} x_{k+\ell}
    = \sum_{\ell=0}^{m-1} \frac{1}{m} (f(x_{k+\ell})-x_{k+\ell}).
\end{align*}
This is a linear multistep method applied  
to the differential equation \eqref{eq:continuous-time-system}, see e.g. \cite[Section I.2]{iserlesFirstCourseNumerical2008}.
In this context what we are looking for
is called the region of \emph{(strict) asymptotic stability},
also known as the \emph{linear stability domain}
of this integration scheme, see
\cite[Section I.4.2]{iserlesFirstCourseNumerical2008}.
Recurrence equations like \eqref{eq:Mann}
and \eqref{eq:moving-average-intro}
and their stability
are also studied in the 
(delay) difference equations
literature, where they appear under the name \emph{Volterra difference equations}, see e.g.
\cite{elaydiAsymptoticStabilityLinear2007},
\emph{higher order difference equations}
\cite{berezanskySufficientConditionsGlobal2005}
or \emph{difference equations with several delays}
\cite{berezanskyStabilityLinearNonlinear2006}

The paper is structured as follows. We fix some notation in
\Cref{sec:notation}
and discuss our notion of stability for
 general linear recurrence relations in \Cref{sec:stability}.
This somewhat non-standard setting is needed to deal with arbitrary
Mann iteration schemes, where the whole past might influence the next
value.
In \Cref{sec:Krasnoselskii iteration} and \Cref{sec:mann-iteration} 
we discuss the regions of stability for general Krasnoselskii
and Mann iteration schemes.
We then introduce our central object of interest,
the moving-average regions of stability.
We determine some of their properties by elementary
means in \Cref{sec:moving-average-iteration}.
From the Schur-Cohn theorem, exact formulas for the regions are derived in
\Cref{sec:Schur-Cohn}.
Using these formulas, further properties are derived in
\Cref{sec:MARSstructure}.
In \Cref{sec:monotonicity} we show that the stability region becomes larger
when we increase the window width and we determine the exact limit
for the window width going to infinity.
The convexity of the moving-average regions of stability is 
established in \Cref{sec:convexity}.
In \Cref{sec:Banach-spaces} we finally take a quick glimpse into
the infinite-dimensional setting and show
that inclusion of the spectrum
in moving-average region of stability characterizes exponential stability
of the moving-average operator $\calM_n(A)$.
We finish with an outlook and some open problems in
\Cref{sec:conclusion}. Finally \Cref{sec:various-estimates}
contains some auxiliary estimates we used in  \Cref{sec:MARSstructure, sec:monotonicity, sec:convexity}.

\paragraph{AI disclosure:}{Qwen 3.8 27B was used for proofreading. Apart from that, the only use of LLMs
was a discussion with ChatGPT Free on September 1, 2026 about Theorem 2.10, which resulted in our proof strategy using polar coordinates.
All other arguments and calculations did not use LLM support.}

\section{Notation}
\label{sec:notation}
We denote the positive integers by $\bbN = \{1,2,\dots\}$
and the nonnegative integers by $\bbN_0$.
For a matrix $A \in \bbC^{d \times d}$ we
denote by $\sigma(A)$ its \emph{spectrum}, i.e.
the set of eigenvalues of $A$.
We denote the open ball of complex numbers with distance less than $r$ from $z
\in \bbC$ by $B_r(z)$. By $\norm{\cdot}$ we denote
the Euclidean norm on $\bbC^d$. The boundary of a set $M$ in a topological space is denoted $\partial M$.
For $c \in \bbR$ we define the half-plane $\bbC_{<c} = \{z \in \bbC \setsep \real{z}<c\}$. For a set $\calB \subseteq \bbC$
the set of complex conjugates is denoted $\calB^*:=\{\overline{b} \setsep b \in \calB\}$.
Similarly, for vectors $v \in \bbC^d$ and matrices $A \in \bbC^{d \times
  d}$ we set $v^* := \overline{v}^\top$ and $A^* :=
\overline{A}^\top$ where complex conjugation is applied entrywise.
For $w \in \bbC^m$ and $v$ a vector in a complex vector space $X$,
we define $w \otimes v$ as the vector in $X^m$
satisfying $(w \otimes v)_i = w_iv$.

\section{Stability of Linear Recurrence Equations}
\label{sec:stability}

We start by discussing the stability of general recurrence equations\footnote{
Such equations sometimes appear under the name
\enquote{Volterra difference equations of nonconvolution type}
in the literature, see e.g. \cite[Section 5]{elaydiStabilityAsymptoticityVolterra2009}.
} of the 
form
\begin{align}
    \label{eq:recurrence}
    x_{k+1} = \sum_{\ell=0}^{k} A(k,\ell)x_\ell,
\end{align}
where $A(k,\ell) \in \bbC^{d \times d}$, $k,\ell\in \bbN_0$,
are given matrices.

The solution of \eqref{eq:recurrence}
starting at time $k_0\in \bbN_0$ in $y=(y_0,\dots,y_{k_0})$
is the uniquely determined sequence 
$(x_k)_{k \in \bbN_0}$ in $\bbC^d$
satisfying \eqref{eq:recurrence}
for $k \geq k_0$ and $x_\ell = y_\ell$ for $\ell =0,\ldots, k_0$. Note that with this convention there exists a unique solution for all possible initial data $k_0\in \bbN_0, y \in \bbC^{d(k_0+1)}$.

\begin{defn}
\label{defn:general-nonautonomous}
  Let $\left(A(k,\ell)\right)_{k,\ell\in \bbN_0}\subseteq \bbC^{d\times d}$.
  We call a solution $(x_k)_{k \in \bbN_0}$
  of \eqref{eq:recurrence}
    \begin{enumerate}[(a)]
    \item \emph{stable}, if
    for every $\eps>0$ and $k_0 \in \bbN_0$
    there is $\delta>0$ 
    such that for every trajectory $(z_k)_{k \in \bbN_0}$
    with $\norm{z_{\ell}-x_{\ell}} < \delta$ for $\ell=0,\dots,k_0$
    we have
    \begin{align*}\norm{z_k - x_k}<\eps \text{ for all } k \in \bbN_0.
    \end{align*}
   
    \item
    \emph{globally attractive}, if
    every solution $(z_k)_{k \in \bbN_0}$ of \eqref{eq:recurrence}
    satisfies
    \[\lim_{k \to \infty} \norm{z_k -x_k} = 0.\]
  \end{enumerate}
\end{defn}

At first glance, the definitions of (asymptotic) stability resemble those that are commonly encountered in the study of dynamical systems. Note, however, that we have avoided to talk of the state of a dynamical system and there is a philosophical conundrum here, when considering standard axiomatizations of dynamical systems, see e.g. \cite[Chapter 2]{hinrichsenMathematicalSystemsTheory2010}. Following the dictum that the state is that information required to determine the evolution to the next time step, the state would have to be the entire history from zero up to time $k$ to determine $x_{k+1}$. However, it is then nontrivial to define a notion of state so that the state of the system converges for instance to the equilibrium position in $0$. For this reason, we are not talking about convergence of the state but rather about convergence of elements of sequences that are solutions. Interestingly, \cite[Definition 4.2]{zbMATH02171469} gives a definition of (asymptotic) stability exactly in the spirit of \cite[Chapter 3]{hinrichsenMathematicalSystemsTheory2010} and then goes on to discuss stability of Volterra equations in Section~6.3 without mentioning that this concept is in fact not defined in that reference. 
It is worth pointing out, that the problem disappears in the behavioral approach to systems theory introduced by Willems, \cite{willems1991paradigms}, \cite{polderman1998introduction}. In this approach a dynamical system $(T,W,\mathcal{B})$ is given by a time set $T$, the signal space $W$ and the \emph{behavior} $\mathcal{B} \subseteq W^T$. In the case of system \eqref{eq:recurrence}, we have $T=\bbN_0$, $W = \bbC^{d}$ and the behavior is given as the set of solutions of \eqref{eq:recurrence}. The point of behavioral theory is precisely to avoid an a priori introduction of a notion of state. (Asymptotic) stability can then be defined as in \Cref{defn:general-nonautonomous} and the problems of the standard dynamical systems approach are avoided\footnote{We thank Jochen Trumpf (Australian National University) for pointing out this connection.}.
Note that the stability condition (a) is not uniform in $k_0$.

\begin{prop}\label{prop:nonuniform-stability-notions}
  Let $\left(A(k,\ell)\right)_{k,\ell\in \bbN_0}\subseteq \bbC^{d\times d}$. The following are equivalent:
    \begin{enumerate}[(i)]
    \item There is a globally attractive solution $(x_k)_{k \in \bbN_0}$ of \eqref{eq:recurrence}.
    \item The zero solution is globally attractive for \eqref{eq:recurrence}.
    \item Every solution of \eqref{eq:recurrence} is stable and globally attractive.
    \end{enumerate}
\end{prop}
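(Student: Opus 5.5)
The plan is to prove the cycle (iii)$\Rightarrow$(i)$\Rightarrow$(ii)$\Rightarrow$(iii). Linearity of \eqref{eq:recurrence} is used throughout: differences and linear combinations of solutions are again solutions. Moreover, for fixed $k_0$, the solution starting at time $k_0$ in $y$ depends linearly on $y \in \bbC^{d(k_0+1)}$.

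The implication (iii)$\Rightarrow$(i) is trivial, since at least one solution exists (e.g. the zero solution). For (i)$\Rightarrow$(ii), let $(x_k)$ be globally attractive and let $(z_k)$ be any solution. Then $(x_k+z_k)$ is also a solution, so $\norm{z_k} = \norm{(x_k+z_k)-x_k} \to 0$, which is global attractivity of the zero solution.

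The substantive step is (ii)$\Rightarrow$(iii). Global attractivity of an arbitrary solution $(x_k)$ follows at once, because for any solution $(z_k)$ the difference $z-x$ is a solution, hence tends to $0$. For stability, fix $\eps>0$ and $k_0$. Since $z-x$ is the solution with initial data $y_\ell = z_\ell - x_\ell$ for $\ell\le k_0$, it suffices to find $\delta$ such that every solution $w$ with $\norm{w_\ell}<\delta$ for $\ell \le k_0$ satisfies $\norm{w_k}<\eps$ for all $k$. I will use the linear solution operator
\[
\Phi_{k_0}\colon \bbC^{d(k_0+1)} \to \ell^\infty(\bbN_0,\bbC^d),\quad y \mapsto (w_k)_{k\in\bbN_0}.
\]
By (ii), every solution tends to zero and is therefore bounded, so $\Phi_{k_0}$ indeed maps into $\ell^\infty$. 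Since $\Phi_{k_0}$ is a linear map on a finite-dimensional space, it is automatically continuous. Concretely, take the basis $e_1,\dots,e_N$ of $\bbC^{d(k_0+1)}$ and set $C := \max_j \sup_k \norm{(\Phi_{k_0} e_j)_k} < \infty$. Expanding $y=\sum_j c_j e_j$ gives $\sup_k\norm{(\Phi_{k_0}y)_k} \le C\sum_j|c_j| \le C' \max_{\ell\le k_0}\norm{y_\ell}$ for a constant $C'$ depending only on $d$ and $k_0$. Choosing $\delta = \eps/C'$ (or any $\delta$ if $C'=0$) yields stability.

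The only point needing care is this boundedness step. Attractivity alone gives boundedness of each individual solution but not a uniform bound, and finite dimensionality of the initial-data space for fixed $k_0$ is exactly what provides uniformity. This is also why the resulting $\delta$ depends on $k_0$, consistent with the remark that stability condition (a) is not uniform in $k_0$.
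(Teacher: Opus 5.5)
Your proof is correct and follows essentially the same route as the paper: you get global attractivity of the zero solution by linearity (using $x+z$ where the paper uses $x-z$), and you get stability by expanding initial data at time $k_0$ in a basis of the finite-dimensional space $\bbC^{d(k_0+1)}$ and using a uniform bound on the finitely many basis solutions. The only difference is that you arrange the implications as the cycle (iii)$\Rightarrow$(i)$\Rightarrow$(ii)$\Rightarrow$(iii) rather than proving (i)$\Rightarrow$(iii) directly, which is purely cosmetic.
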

\begin{proof}
  Clearly $(iii) \implies (ii) \implies (i)$.
  To show $(i) \implies (iii)$, assume there is a globally attractive solution $(x_k)_{k \in \bbN_0}$.
  We first show that the zero solution is globally
  attractive.
  Let $k_0 \in \bbN_0$ and let $(z_k)_{k \in \bbN_0}$
  be any solution.
  By linearity $(x_k-z_k)_{k \in \bbN_0}$ is a solution
  and hence
  \[\lim_{k \to \infty} \norm{z_k}=\lim_{k \to \infty} \norm{x_k - (x_k-z_k)} = 0.\]
  This shows that the zero solution is globally attractive.
  Next we show that the zero solution is also stable.
  Fix $k_0 \in \bbN_0$ and let $e_1,\dots,e_n$ be a
  basis of $(\bbC^d)^{k_0+1}$. 
  There is a constant $D>0$
  such that the coordinates with respect
  to $e_1,\dots,e_n$ of every $y = (y_0,\ldots,y_{k_0})\in (\bbC^d)^{k_0+1}$
  with $\norm{y_k} \leq 1$ for $k =0,\dots,k_0$ are bounded in absolute value by $D$. 
  Let $w^j,\;j=1,\ldots,n$, be the solution with initial condition $e_j$
  at time $k_0$.
    
  Since $w^j$ converges to $0$ for
  all $j=1,\dots,n$, there is a uniform bound $C>0$ such that
  $\norm{w^j_k} \leq C$ for all $k\in \bbN_0, j \in
  \{1,\dots,n\}$.
  Now for every solution $(z_k)_{k \in \bbN_0}$ starting at $k_0$ with
  $\norm{z_k} \leq \delta := \frac{\eps}{nCD}$
  for $k=0,\dots,k_0$
  we can write $z$ as $\sum_{j=1}^n \beta_j w^j$
  with $\abs{\beta_j} \leq D\delta$
  and hence by linearity
  \begin{align*}
    \norm{z_k} \leq D\delta n C = \eps \text{ for all } k \in \bbN_0.
  \end{align*}
  This shows that the zero solution is stable. Global attractivity and stability of an arbitrary solution follow directly from linearity.
\end{proof}

It is not particularly surprising that attractivity implies stability
in the sense of Lyapunov, as this is true for many other classes of
linear systems, see e.g. \cite[Chapter
3]{hinrichsenMathematicalSystemsTheory2010}. Unfortunately, the proofs
we were able to locate in the literature did not encompass the case of
Volterra difference equations and the infinite delay present in this
case. In the sequel we will, by abuse of terminology, define
asymptotic stability through attractivity. For the linear systems
under consideration this
is justified by \Cref{prop:nonuniform-stability-notions}.

\section{Krasnoselskii Iteration}
\label{sec:Krasnoselskii iteration}
We now apply the stability notion defined in the previous \Cref{sec:stability} to
our first iteration scheme. Let $A \in \bbC^{d \times d}$ and let $(\omega_k)_{k \in \bbN_0}$ be a sequence in $[0,1]$.
The Krasnoselskii iteration of $A$ with
weight sequence $(\omega_k)_{k \in \bbN_0}$
is given by the recurrence equation
  \begin{align}
  \begin{aligned}
  \label{eq:krasnoselskii-eq}
    y_{k+1} &=\omega_k A y_k +  (1-\omega_k) y_k \\
    &=(\omega_kA + (1-\omega_k)I) y_k.
\end{aligned}
  \end{align}
This is a special case of the Volterra iteration \eqref{eq:recurrence} with matrices $A(k,k) = \omega_k A + (1-\omega_k)I$, and $A(k,\ell)=0$, $\ell=0,\ldots,k-1$, $k\in \bbN_0$. We continue to use the solution concept introduced in \Cref{sec:stability}.

\begin{defn}
\label{def:krasnostable}
    We call $A$ \emph{Krasnoselskii-stable}
    with respect to the weight sequence $\omega$
    (Krasnoselskii-$\omega$-stable), if  every solution of \eqref{eq:krasnoselskii-eq} converges to $0$.
\end{defn}

\begin{exam}
  \label{exa:stability-of-konstant-Krasnoselskii}
  \begin{enumerate}[(i)]
  \item Krasnoselskii iteration of $A$ with constant weight $\omega_k=1$
    is just conventional iteration\footnote{Known
    as \enquote{Picard iteration} \cite{IterativeApproximationFixed2007} or even \enquote{Banach-Picard iteration} \cite{bauschkeConvexAnalysisMonotone2017} in the literature on iteration schemes.} of $A$, 
    \[y_{k+1}=Ay_k.\]
    Therefore $A$ is $Krasnoselskii$ stable
    with constant weight $1$
    if and only if $A$ is Schur stable, i.e. if and only if \[\sigma(A) \subseteq B_1(0).\]
  \item If the sequence $(\omega_k)_{k \in \bbN_0}$ is constant equal
    to $\omega \in (0,1]$ then Krasnoselskii iteration
    of $A$ with weight $\omega$ corresponds to
    the conventional iteration of $(1-\omega)I + \omega A$
    which is Schur stable if and only if
    $(1-\omega) + \omega \sigma(A) \subseteq B_1(0)$,
    i.e. if and only if \[\sigma(A) \subseteq B_{\frac{1}{\omega}}\left(1-\frac{1}{\omega}\right).\]
  \item Note that it really matters that we
    ask for convergence for all initial times.
    For example consider the sequence $\omega_0 = \frac{1}{3}, \omega_n=1$
    for $n \geq 1$.
    Then Krasnoselskii iteration of $A=-2$ with weight $(\omega_n)_{n \in
      \bbN_0}$ is not asymptotically stable according to the
      definition we use.
    Nevertheless, the Krasnoselskii iteration of $A=-2$
    with weight $(\omega_n)_{n \in
      \bbN_0}$ initialized at time
    $k_0=0$ with arbitrary initial value $y_0\in \bbC$ is constant equal
    to $0$ after the first time step since
    \[y_1 = \left(\left(1-\frac{1}{3}\right) +\frac{1}{3}(-2)\right)y_0=0.\]
  \end{enumerate}
\end{exam}

Next we discuss the Krasnoselskii stability of matrices
for weight sequences converging to zero. Part (i)
appeared as Theorem 2 in \cite{belgioiosoConvergenceDiscreteTimeLinear2018a}.
The proof given in this reference uses results about nonexpansive operators.
We give a short self-contained proof. We first note an auxiliary lemma.
\begin{lem}
    \label{lem:bomega-lemma}
    Let $\omega = (\omega_k)_{k \in \bbN_0}$ be a sequence in $[0,1]$
  converging to $0$ and
    let $b\in \bbC_{<0}$. Then there exists a $K_1\in \bbN_0$ such that for all $K_1\leq k_0 \leq K \in \bbN_0$ we have
    \begin{equation}
        \label{eq:bomega-ineq}
        \prod_{k=k_0}^K\abs{1+b\omega_k}^2 \leq
        \exp\left(\sum_{k=k_0}^K \omega_k \real b \right).
    \end{equation}
\end{lem}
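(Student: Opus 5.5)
Since both sides of \eqref{eq:bomega-ineq} are products over $k$ (the right-hand side is $\prod_{k=k_0}^K \exp(\omega_k \real b)$) and all factors are nonnegative, it suffices to find $K_1$ such that for every single index $k \geq K_1$ we have
\begin{equation*}
  \abs{1+b\omega_k}^2 \leq \exp(\omega_k \real b).
\end{equation*}
Multiplying these termwise inequalities over $k=k_0,\dots,K$ then gives the claim for all $K_1 \leq k_0 \leq K$.

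To establish the termwise bound, expand
\begin{equation*}
  \abs{1+b\omega_k}^2 = 1 + 2\omega_k \real b + \omega_k^2 \abs{b}^2 .
\end{equation*}
Use the elementary inequality $1+t \leq e^t$ for all real $t$, applied with $t = 2\omega_k\real b + \omega_k^2\abs{b}^2$. This yields
\begin{equation*}
  \abs{1+b\omega_k}^2 \leq \exp\bigl(2\omega_k \real b + \omega_k^2\abs{b}^2\bigr).
\end{equation*}
It remains to ensure $2\omega_k\real b + \omega_k^2\abs{b}^2 \leq \omega_k\real b$, i.e. $\omega_k\bigl(\real b + \omega_k \abs{b}^2\bigr) \leq 0$. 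For $\omega_k = 0$ this is trivial. Otherwise it is equivalent to $\omega_k \leq -\real b/\abs{b}^2$, and the right-hand side is a strictly positive number because $\real b<0$.

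Since $\omega_k \to 0$, there is $K_1$ with $\omega_k \leq -\real b/\abs{b}^2$ for all $k \geq K_1$, and this finishes the argument.

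There is no real obstacle here. The only points needing care are the reduction to a termwise bound, which uses nonnegativity of the factors, and choosing the threshold from the hypothesis $\real b<0$ so that the quadratic error term $\omega_k^2\abs{b}^2$ is absorbed by half of the linear term.
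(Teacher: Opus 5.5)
Your proof is correct and follows essentially the same route as the paper: expand $\abs{1+b\omega_k}^2 = 1+2\omega_k\real b+\omega_k^2\abs{b}^2$, apply $1+t\leq e^t$, and choose $K_1$ so that $\omega_k\leq -\real b\,\abs{b}^{-2}$ for $k\geq K_1$. The only cosmetic difference is that you compare factor by factor, whereas the paper first bounds the whole product by the exponential of the summed exponents and then compares the exponents.
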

\begin{proof}
For $\omega \in \bbR$ we have 
    \begin{equation}
    \label{eq:bsquaredformula}
      \abs{1+b\omega}^2 = (1+b\omega)(1+\overline{b}\omega) = 1 + 
      2\omega \real b + \omega^2|b|^2.
  \end{equation}
  Using the estimate $(1+x) \leq e^x$, valid
  for $x\in \bbR$, we get for arbitrary $k_0 \leq K$
  \begin{equation}
  \label{eq:firstexpestimate}
     \prod_{k=k_0}^K\abs{1+b\omega_k}^2 \leq
        \exp\left(\sum_{k=k_0}^K 2\omega_k \real b+\omega_k^2 \abs{b}^2\right).
  \end{equation}
  Choose $K_1\in \bbN_0$ sufficiently large such that $\omega_k \leq - \real b \abs{b}^{-2}$ for all $k\geq K_1$. Then the claim \eqref{eq:bomega-ineq} holds for all $K \geq k_0 \geq K_1$.
\end{proof}

\begin{thm}
  \label{thm:stability-Krasnoselskii}
  Let $\omega = (\omega_k)_{k \in \bbN_0}$ be a sequence in $[0,1]$
  converging to $0$.
  \begin{enumerate}[(i)]
    \item If $\sum_{n=0}^\infty \omega_n = \infty$,
  then $A$ is 
  Krasnoselskii-stable
  with weight sequence $\omega$
  if and only if $\sigma(A) \subseteq \Hone$.
\item If $\sum_{n=0}^\infty \omega_n < \infty$,
  then no matrix is Krasnoselskii-stable with weight sequence $\omega$.
\end{enumerate}
\end{thm}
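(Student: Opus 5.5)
We first reduce to the scalar case by triangularizing. By Schur decomposition, $A = U T U^*$ with $U$ unitary and $T$ upper triangular with the eigenvalues $\lambda_1,\dots,\lambda_d$ on the diagonal. The transformation $y_k \mapsto U^* y_k$ preserves norms and maps solutions of \eqref{eq:krasnoselskii-eq} for $A$ to solutions for $T$. So it suffices to treat upper triangular $A$.

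Since every solution of \eqref{eq:krasnoselskii-eq} starting at time $k_0$ has the form
\[
y_{K+1} = \prod_{k=k_0}^{K}\bigl((1-\omega_k)I+\omega_k T\bigr)\, y_{k_0},
\]
the diagonal of this product is
\[
\prod_{k=k_0}^K \bigl(1+(\lambda_j-1)\omega_k\bigr).
\]
Writing $b_j := \lambda_j - 1$, the condition $\lambda_j \in \Hone$ is exactly $b_j \in \bbC_{<0}$.

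\textbf{Part (i), necessity.} Suppose some $\lambda_j \notin \Hone$, i.e.\ $\real b_j \geq 0$. Take an eigenvector $v$ of $A$ for $\lambda_j$; the solution started at $v$ is $\prod_k (1+b_j\omega_k)\, v$. By \eqref{eq:bsquaredformula},
\[
\abs{1+b_j\omega_k}^2 = 1 + 2\omega_k \real b_j + \omega_k^2 \abs{b_j}^2 \geq 1,
\]
so this solution does not tend to $0$, and $A$ is not Krasnoselskii-$\omega$-stable. This works for any initial time $k_0$.

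\textbf{Part (i), sufficiency.} Assume all $\real b_j<0$. By \Cref{lem:bomega-lemma} there is $K_1$ such that for $K_1 \leq k_0 \leq K$,
\[
\prod_{k=k_0}^K \abs{1+b_j\omega_k}^2 \leq \exp\Bigl(\real b_j \sum_{k=k_0}^K \omega_k\Bigr),
\]
which tends to $0$ as $K\to\infty$ because $\sum_k \omega_k = \infty$. So the scalar recursions decay.

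For the triangular system we proceed by back-substitution, induction from the last coordinate upward. Each coordinate satisfies
\[
z_{k+1} = (1+b\omega_k) z_k + \omega_k g_k,
\]
where $g_k$ is a linear combination of lower coordinates and hence $g_k \to 0$ by the induction hypothesis. This is the step I expect to be the main obstacle. I would handle it with a discrete variation-of-constants estimate. Put $\Phi(K,k) := \prod_{i=k}^{K}(1+b\omega_i)$. For $k \geq K_1$, \Cref{lem:bomega-lemma} gives
\[
\abs{\Phi(K,k)} \leq \exp\Bigl(-c \sum_{i=k}^K \omega_i\Bigr), \qquad c := -\tfrac{1}{2}\real b > 0.
\]
Hence
\[
\abs{z_{K+1}} \leq \abs{\Phi(K,k_0)}\,\abs{z_{k_0}} + \sum_{k=k_0}^K \omega_k \abs{\Phi(K,k+1)}\,\abs{g_k}.
\]
For the sum, split at an index $N$ after which $\abs{g_k}<\eps$. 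The tail contributes at most
\[
\eps \sum_{k} \omega_k\, e^{-c\sum_{i>k}\omega_i} \lesssim \eps \int_0^\infty e^{-ct}\,dt = \eps/c,
\]
a Riemann-sum comparison that uses $\omega_k \to 0$ (so $\omega_k e^{c\omega_k}$ is bounded by a constant times $\omega_k$). The head consists of finitely many terms, each multiplied by a factor that decays to $0$ because $\sum\omega=\infty$.

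Initial times $k_0 < K_1$ only add a finite product in front and are harmless. Finally, note that the product $\prod(1+b\omega_k)$ may vanish for some $k$, which only helps.

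\textbf{Part (ii).} If $\sum\omega_k<\infty$, take any eigenvalue $\lambda$ with eigenvector $v$ and set $b = \lambda - 1$. Since $\omega_k \to 0$, choose $k_0$ large enough that $\abs{b}\omega_k \leq 1/2$ for all $k\geq k_0$. Using $\log\abs{1+x} \geq -2\abs{x}$ for $\abs{x}\leq 1/2$,
\[
\prod_{k=k_0}^K \abs{1+b\omega_k} \geq \exp\Bigl(-2\abs{b}\sum_{k\geq k_0}\omega_k\Bigr) > 0
\]
uniformly in $K$. So the solution starting at time $k_0$ in $v$ does not converge to $0$. Here the freedom to choose the initial time is used essentially, just as in \Cref{exa:stability-of-konstant-Krasnoselskii}\,(iii).
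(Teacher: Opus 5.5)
Your proof is correct. The necessity part of (i) matches the paper: if $\real b\ge 0$, every factor $\abs{1+b\omega_k}$ is at least $1$. The sufficiency part for matrices and part (ii) follow genuinely different routes.

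For sufficiency, the paper uses the Jordan form to reduce to a single block $aI+N$. Since $I$ and $N$ commute, the transition matrix factors as $\prod_k(1+b\omega_k)$ times $\sum_{\ell<d} e_\ell\bigl(\tfrac{\omega_{k_0}}{1+b\omega_{k_0}},\dots,\tfrac{\omega_K}{1+b\omega_K}\bigr)N^\ell$. Bounding the elementary symmetric polynomials $e_\ell$ by $(C\sum\omega_k)^\ell$ then gives an explicit estimate of the form $e^{\frac12\real b\, S}$ times a polynomial in $S=\sum_{k=k_0}^K\omega_k$. That is a quantitative decay rate, uniform in the initial value.

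Your unitary Schur triangularization with back-substitution is instead a cascade, or input-to-state, argument. Its key fact is that $\sum_k \omega_k\abs{\Phi(K,k+1)}$ is bounded uniformly in $K$. This follows from the comparison with $\int_0^\infty e^{-ct}\,dt$, which gives the bound $e^c/c$; $\omega_k\le 1$ already suffices here, so $\omega_k\to0$ is not needed at that step. The consequence is that a decaying forcing term $g_k\to 0$ is absorbed. This avoids Jordan forms and the combinatorics of $e_\ell$, and it is more robust, since it would also handle additive perturbations. As written, however, it only gives qualitative convergence and needs the $\eps$-splitting.

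For (ii), your lower bound $\prod_{k\ge k_0}\abs{1+b\omega_k}\ge \exp\bigl(-2\abs{b}\sum_{k\ge k_0}\omega_k\bigr)>0$ works directly for any eigenvalue. It is shorter than the paper's argument, which first deduces $\real b<0$ from stability and then derives a contradiction between $\sum c_k=\infty$ and $\sum c_k<\infty$.
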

\begin{proof}
  We first treat (i) in the case of $A=a \in \bbC$.
 To simplify notation, set $b:=a-1$.
  By \Cref{def:krasnostable},  Krasnoselskii-$\omega$-stability of $a$ 
  is equivalent to  
  \begin{align*}
   \forall k_0 \in \bbN_0 \;:\;\lim_{K\to \infty} \prod_{k=k_0}^K \abs{(1-\omega_k)+\omega_k a} =0.
  \end{align*}
  Squaring the partial products
  does not change their (non-) convergence.
  In view of \eqref{eq:bsquaredformula}, we want to show that $\sum_{n=0}^\infty \omega_n =
  \infty$ 
  implies the validity of the equivalence
  \[\left(\forall k_0\in \bbN_0: 
    \prod_{k=k_0}^\infty (1+2\omega_k \real
    b+\omega_k^2 \abs{b}^2)=0\right) \quad \Leftrightarrow \quad b \in \bbC_{<0}.\]
  A necessary condition for the convergence of the product on the left to zero
  is that
  \begin{equation}
  \label{eq:bs-ness}
      \omega_k (2\real b+\omega_k \abs{b}^2)<0
  \end{equation} for infinitely many $k$. As $\omega_k\to 0$, 
  this is equivalent to $\real b <0$, i.e., $b \in \bbC_{<0}$.
  For the converse direction, \Cref{lem:bomega-lemma} shows that if $\real b <0$ and $\sum_{k=0}^\infty \omega_k =
  \infty$, then 
  the sum in the exponent on the right hand side of \eqref{eq:bomega-ineq} diverges to $-\infty$,
  which means that our product converges to $0$.
  
  Now consider the general case $A \in \bbC^{d \times d}$.
  By passing to the restriction to eigenspaces we immediately see
  from 
  the arguments around \eqref{eq:bs-ness}
  that asymptotic stability of the 
  $(\omega_n)_{n \in \bbN}$-Krasnoselskii iteration implies
  $\sigma(A) \subset \Hone$.
  For the converse implication note that using
  the Jordan normal form we can reduce the 
  discussion
  to the case
  $A = aI + N$
  where $N$ is the $d \times d$ Jordan block with eigenvalue $0$.
  Once again setting $b:=a-1$, we get
  \[(1-\omega_k)I+\omega_k A = (1+b\omega_k)I + \omega_k N , \quad  k \in \bbN_0.\]
  Estimating the norm we obtain for 
  $K-k_0> d$, $k_0$ sufficiently large so that $b\omega_k \neq -1$ and $k\geq k_0$,
  \begin{align*}
    \norm{\prod_{k=k_0}^K (I-\omega_k)I+\omega_k A} \leq
    \left(\prod_{k=k_0}^K\abs{1+b\omega_k}\right) \sum_{\ell=0}^{d-1}
    \norm{N^\ell} e_\ell\left(\frac{\omega_{k_0}}{1+b\omega_{k_0}},\dots, \frac{\omega_K}{1+b\omega_K}\right),
  \end{align*}
    where $e_\ell(x_1,\dots,x_{K-k_0+1}) = \sum_{1\leq n_1 < \dots <n_\ell \leq
    K-k_0+1} x_{n_1}\cdots x_{n_\ell}$ is the $\ell$-th
    elementary symmetric polynomial
    in $K-k_0+1$ variables.
  Now $\frac{1}{1+b\omega_k}$ converges
  to $1$, so for $k$ sufficiently large it is bounded on $[k,\infty)$ by some constant $C>0$.
  Putting this into our estimate we obtain for $k_0$ sufficiently large
    \begin{align*}
    \norm{\prod_{k=k_0}^K (I-\omega_k)I+\omega_k A}
    &\leq
    \left(\prod_{k=k_0}^K\abs{1+b\omega_k}\right) \sum_{\ell=0}^{d-1} \norm{N^\ell}
    \left(C\sum_{k=k_0}^K \omega_k\right)^\ell \\
    &\stackrel{\eqref{eq:bomega-ineq}}{\leq}
      \exp\left(\frac{1}{2}\real b  \sum_{k=k_0}^K \omega_k \right)\sum_{\ell=0}^{d-1} \norm{N^\ell}
      \left( C \sum_{k=k_0}^K \omega_k\right)^\ell.    
  \end{align*}
  The right hand side converges to $0$ for $K \to \infty$ since $\lim_{r \to \infty} \exp(-\beta r)r^\ell =
  0$ for all $\beta>0$ and $\ell \in \bbN$.
  
  For (ii) the restriction to eigenspaces shows again that
  it is enough to treat the scalar case $A=:b+1 \in \bbC$.
  Assume $\sum_{k=0}^\infty \omega_k<\infty$  and that $b+1$ is 
  Krasnoselskii-$\omega$-stable.
  Appealing again to \eqref{eq:bsquaredformula} this means
  \[\prod_{k=k_0}^\infty 1+2\real b \omega_k + \omega_k^2 \abs{b}^2 =0\]
  for all $k_0 \in \bbN_0$.
    As in \eqref{eq:bs-ness}, this implies
  $\real b<0$. Set \[c_k:=-(2\real b \omega_k + \omega_k^2
  \abs{b}^2).\]
  For sufficiently large $k$ we have $c_k \in (0,1)$.
  Hence
  \begin{align*}
    \exp\left(\sum_{k=k_0}^\infty \frac{c_k}{1-c_k}\right) \geq   \prod^\infty_{k=k_0} 1+\frac{c_k}{1-c_k}=\prod_{k=k_0}^\infty \frac{1}{1-c_k} = \infty.
  \end{align*}
  for sufficiently large $k_0$.
  Since $\lim_{k \to \infty} 1-c_k = 1$, this implies
  $\sum_{k=k_0}^\infty c_k = \infty$.
  But $\sum_{k=k_0}^\infty \omega_k^2 \leq \sum_{k=k_0}^\infty \omega_k <\infty$,
  and so also $\sum_{k=k_0}^\infty c_k<\infty$.
  This contradiction shows the assertion.
\end{proof}

For weight sequences $(\omega_k)_{k \in \bbN_0}$ not converging to $0$,
Krasnoselskii stability of $A$
is not characterized by the location of the
spectrum of $A$, as the following example shows.
\begin{exam}
  \label{exam:non-open-stability-region}
  Consider the sequence $\omega$ given by $\omega_k = \frac{k}{k+1}$, $k\in \bbN_0$.
  Then $-1$ is 
    Krasnoselskii-$\omega$-stable, but
  $\left(\begin{smallmatrix}
    -1 & 1 & 0\\
    0 & -1 &1\\
    0& 0 &-1 
  \end{smallmatrix}\right)$
  is not. Furthermore,
  the set of all $a \in \bbC$, which
  are not Krasnoselskii-$\omega$-stable   is not open.
\end{exam}
\begin{proof}
  We first show
  that $a \in \bbC$ is Krasnoselskii-stable 
  with respect to $(\omega_k)_{k \in \bbN_0}$  if
  $\abs{a} \leq 1$ and $a \neq 1$.
  Set $b:=a-1$.
  The necessary condition (already used in \eqref{eq:bs-ness})
  \[\omega_k (2\real b+\omega_k \abs{b}^2)<0 \text{ for infinitely many } k\]
  implies $b=a-1 \neq 0$ and $\abs{a}-1=\abs{b+1}^2-1 = 
  2\real b+ \abs{b}^2 \leq 0$.

  On the other hand, for $\abs{a} \leq 1$ and $b \neq 0$
  we have $2\real b+ \omega_k \abs{b}^2 < 0$
  and hence, using \eqref{eq:firstexpestimate}, we have for arbitrary $k_0 \leq K$
  \begin{align*}
    \prod_{k=k_0}^{K-1}  \abs{1+b \omega_k}^2 \leq \exp\left(\sum_{k=k_0}^{K-1} 2\left(1-\frac{1}{k+1}\right)\real b+ \left(1-\frac{1}{k+1}\right)^2 \abs{b}^2\right).
  \end{align*}
  Using the following well-known bounds for the 
  partial sums of the harmonic series
  \begin{align*}
    \log(K) \leq \sum_{n=1}^{K} \frac{1}{n} \leq \log(K)+1
  \end{align*}
  together with $\sum_{n=1}^\infty \frac{1}{n^2} < \infty$, it follows that there is a constant $C$, depending on $b$
  and $k_0$  such that 
  \begin{align*}
    \prod_{k=k_0}^{K-1}  \abs{1+b \omega_k}^2 &\leq \exp((K- \log K )2\real b + (K -
    2\log K )\abs{b}^2 +C) \\
    &\leq \exp((K-\log K) (2\real b + \abs{b}^2) - \log K \abs{b}^2 +C) \to 0
  \end{align*}
  for $K \to \infty$.
  In particular for $a=-1$ we get asymptotic stability for the
  Krasnoselskii iteration.
  On the other hand, let $N$ be the $3 \times 3$ Jordan block with
  eigenvalue $0$ and set $A:=-I+N$.
  Then for $K\geq k_0 \geq 2$ and using $\beta_k:=\frac{\omega_k}{1-2\omega_k}=\frac{k}{1-k}$
  \begin{align}
  \label{eq:2-2-counter-krasnoselskii}
  \begin{aligned}
    &\prod_{k=k_0}^K ((1-\omega_k)I+\omega_k A)=\prod_{k=k_0}^K ((1-2\omega_k) I
                                     + \omega_k N)\\
    &=\left(\prod_{k=k_0}^K \frac{1-k}{k+1}\right) \left(I + \sum_{k=k_0}^K
      \beta_{k} N + \sum_{k_0\leq k_1<k_2\leq K}
      \beta_{k_1} \beta_{k_2} N^2\right).
  \end{aligned}
  \end{align}
  Now 
  \begin{align*}
      \left\vert\prod_{k=k_0}^K \frac{1-k}{k+1}\right\vert \geq \frac{1}{K(K+1)}
  \end{align*}
  and 
  \begin{align*}
      \sum_{k_0\leq k_1<k_2\leq K} \beta_{k_1} \beta_{k_2} \geq \frac{1}{4}\frac{(K-k_0)^2}{2}
  \end{align*}
  since the left hand side is a sum of at least $\frac{(K-k_0)^2}{2}$ summands, each of which is at least $\frac{1}{4}$.
  Hence the absolute value of the coefficient of $N^2$ in
  \eqref{eq:2-2-counter-krasnoselskii}
  is at least
  \begin{align*}
    \frac{1}{K(K+1)} \frac{(K-k_0)^2}{8}
  \end{align*}
  which converges to $\frac{1}{8} \neq 0$ for $K \to \infty$.
  
\end{proof}

\section{Mann Iteration}
\label{sec:mann-iteration}
In \cite{mannMeanValueMethods1953}, Mann  described a more general iteration scheme than the classical Mann iteration \eqref{eq:Mann}
based on what we call a \emph{Mann-weight sequence}.
This is a sequence $(\pi^k)_{k \in \bbN_0}$ of coefficients of convex combinations, i.e.  a sequence with
  values in $[0,1]$
  such that $\pi^k_\ell =0$ for $\ell>k$
  and $\sum_{\ell=0}^k \pi^k_\ell=1$ for every $k \in \bbN_0$.
  Now given a matrix $A \in \bbC^{d \times d}$
  the \emph{Mann iteration} of $A$ with
    weight sequence $(\pi^k)_{k \in \bbN_0}$
    is given by the recurrence equation
  \begin{align}
  \label{eq:mann-eq}
    y_{k+1} &= A\sum_{\ell=0}^k \pi^k_\ell y_\ell .
  \end{align}
  The notation that we use is inspired by
  \cite{contrerasOptimalErrorBounds2023}. Again this is a special case of Volterra iteration and we continue to use the solution and stability concepts introduced in \Cref{sec:stability}.

  \begin{defn}
    We call $A\in \bbC^{d\times d}$ \emph{Mann-stable}
    with respect to the weight sequence $(\pi^k)_{k \in \bbN_0}$
    if every solution of \eqref{eq:mann-eq} converges to $0$.
\end{defn}

\begin{rem}
\label{rem:mann-only-eventual-weights-are-relevant}
  It follows immediately from the definition, that a matrix 
  that is Mann-stable with respect to a weight sequence
 $(\pi^k)_{k \in \bbN_0}$ is also Mann-stable
  with respect to any weight sequence which
  eventually agrees with $(\pi^k)_{k \in \bbN_0}$.
\end{rem}

\begin{exam}
  The classical Mann iteration \eqref{eq:Mann} given by
  \begin{align*}
    y_{k+1} = A \left(\frac{1}{k+1} \sum_{\ell=0}^k y_\ell\right)
  \end{align*}
  discussed in the introduction
  corresponds to the weight sequence $(\pi^k)_{k \in \bbN_0}$ with $\pi^k = (\frac{1}{k+1},\dots,\frac{1}{k+1},0,\dots)$.
\end{exam}

Just as we did in the introduction, we set 
\begin{align}
  \overline{y}_k:=\sum_{\ell=0}^k \pi^k_\ell y_\ell,
  \label{eq:def-overlinex}
\end{align}
and obtain
\begin{align}
  y_{k+1} &= A \overline{y}_k
            \label{eq:yk-eq-A-overlineyk}
\end{align}
If there is $\overline{y}_{-1}$ with $A \overline{y}_{-1}=y_0$, then we get the following recursion
for $\overline{y}_k$:
\begin{align*}
  \overline{y}_k = \sum_{\ell=0}^k \pi_\ell^k A\overline{y}_{\ell-1}
\end{align*}
Notice that we did not use the linearity of $A$ here.
This is the form of Mann iteration used in \cite{contrerasOptimalErrorBounds2023}.

Krasnoselskii iteration as discussed in \Cref{sec:Krasnoselskii
  iteration} can be treated as a special case of Mann iteration as the
following proposition shows. This connection
already appears in Mann's original paper
\cite{mannMeanValueMethods1953}
for the classical Mann iteration.
For a discussion of the general case
see \cite[Theorem 4.2]{IterativeApproximationFixed2007}. 
Note, however, that we were not able to find the exact statement about the connection between the stability of the two iteration schemes in the literature.      

\begin{prop}
  \label{prop:Krasnoselskii-as-mann}
  Let $(\omega_k)_{k \in \bbN_0}$ be a sequence in $[0,1]$.
  Define $\pi^k$ recursively by
  \begin{align*}
    \pi^0 &:= \delta^0, \quad \pi^{k+1} :=(1-\omega_k) \pi^k +  \omega_k \delta^{k+1}
  \end{align*}
  where $\delta^k$ is the sequence which equals one at index $k$
  and zero otherwise.
  
  \begin{enumerate}[(i)]
  \item Let $(x_k)_{k \in \bbN_0}$
    be the solution of the Mann iteration of $A$ 
    with respect to the weight sequence 
    $(\pi^k)_{k \in \bbN_0}$ with initial values
    $x_0,\dots,x_{k_0}$ started at time $k_0$.
    Then
    \begin{align}
      \overline{x}_{k+1} = (1-\omega_k) \overline{x}_k+\omega_k
      A\overline{x}_k
      \label{eq:overlinexk-krassno}
    \end{align}
      for all $k \geq k_0$.
  \item Assume that $A$ is invertible or that $\sum_{k=0}^\infty
    \omega_k = \infty$. Then $A$ is Krasnoselskii-stable with respect to $(\omega_k)_{k \in \bbN_0}$
  if and only if $A$ is Mann-stable with respect to $(\pi^k)_{k \in \bbN_0}$.
  \end{enumerate}
\end{prop}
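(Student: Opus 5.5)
Part (i) is a direct computation. By definition of the weights, for $k \geq k_0$ (indeed for every $k$) we have $\pi^{k+1}_\ell = (1-\omega_k)\pi^k_\ell$ for $\ell \leq k$ and $\pi^{k+1}_{k+1} = \omega_k$, since $\pi^k_{k+1}=0$. Hence
\begin{align*}
  \overline{x}_{k+1} = \sum_{\ell=0}^{k+1}\pi^{k+1}_\ell x_\ell = (1-\omega_k)\sum_{\ell=0}^k \pi^k_\ell x_\ell + \omega_k x_{k+1} = (1-\omega_k)\overline{x}_k + \omega_k x_{k+1}.
\end{align*}
For $k \geq k_0$ the Mann recursion gives $x_{k+1} = A\overline{x}_k$ by \eqref{eq:yk-eq-A-overlineyk}, which yields \eqref{eq:overlinexk-krassno}. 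I would also check that the $\pi^k$ are valid Mann weights (nonnegative, summing to one), which follows by induction.

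For (ii), first suppose $A$ is Mann-stable and let $(y_k)$ be any Krasnoselskii solution started at $k_0$ in $y_{k_0}$. I will produce a Mann solution whose averages are $y_k$. Take a Mann solution $(x_k)$ started at time $k_0$ whose first values satisfy $\overline{x}_{k_0} = y_{k_0}$. For example, choose $x_0=\dots=x_{k_0}=y_{k_0}$, so that $\overline{x}_{k_0}=y_{k_0}$ because the weights sum to one. By (i), $\overline{x}_k$ satisfies the Krasnoselskii recursion from $k_0$ onwards, so $\overline{x}_k = y_k$ for $k\geq k_0$. Mann stability gives $x_k\to 0$, and I want to conclude $\overline{x}_k\to0$. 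Here the two hypotheses enter.

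If $A$ is invertible, then $\overline{x}_k = A^{-1}x_{k+1}\to 0$. If $\sum\omega_k=\infty$, I write $\overline{x}_k$ as a convex combination: unrolling gives $\pi^k_\ell = \omega_{\ell-1}\prod_{j=\ell}^{k-1}(1-\omega_j)$ for $\ell\geq1$, and $\pi^k_0 = \prod_{j=0}^{k-1}(1-\omega_j)$. For each fixed $\ell$, $\pi^k_\ell\to0$, because divergence of $\sum\omega_j$ forces $\prod(1-\omega_j)\to0$. This is a Toeplitz-type regular summation, and I would apply the standard Toeplitz lemma: split the sum at a large index $L$ beyond which $\norm{x_\ell}<\eps$, bound the tail by $\eps$, and let the finitely many head weights go to zero. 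Then $\overline{x}_k\to0$ in this case as well.

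Conversely, suppose $A$ is Krasnoselskii-stable and let $(x_k)$ be a Mann solution started at $k_0$. By (i), $(\overline{x}_k)_{k\ge k_0}$ is a Krasnoselskii solution started at $k_0$, so $\overline{x}_k\to0$, and therefore $x_{k+1}=A\overline{x}_k\to0$. This direction needs no hypothesis. The only real obstacle is the averaging step in the case $\sum\omega_k=\infty$, namely verifying the regularity of the weight matrix; the formula for $\pi^k_\ell$ above makes this routine.
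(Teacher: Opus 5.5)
Your proposal is correct and follows essentially the same route as the paper: the same computation for (i), the same closed form $\pi^k_\ell=\omega_{\ell-1}\prod_{j=\ell}^{k-1}(1-\omega_j)$ with a Toeplitz-type splitting to pass from $x_k\to0$ to $\overline{x}_k\to0$, and the same realisation of a Krasnoselskii solution as the averages of a Mann solution. The only difference is cosmetic: you take $x_0=\dots=x_{k_0}=y_{k_0}$, whereas the paper places $(\pi^{k_0}_\ell)^{-1}y_{k_0}$ at a single index $\ell$ with $\pi^{k_0}_\ell\neq0$ and zeros elsewhere; both choices give $\overline{x}_{k_0}=y_{k_0}$.
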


\begin{rem}
    Some references simply call
    Mann iteration with the weights
    defined in \Cref{prop:Krasnoselskii-as-mann}
    \enquote{Krasnoselskii iteration}, see e.g.
    \cite{contrerasOptimalErrorBounds2023}, \cite{cortildKrasnoselskiiMannIterations2025} and this 
    is justified by \eqref{eq:overlinexk-krassno}. However,
    without one of the assumptions in (ii) 
    the stability of the two
    iteration schemes differs. This is due to the fact
    that $\overline{x}_k \to 0$
    does not imply $x_k \to 0$ in general.
    For example, if $A=0$, then Mann iteration is always stable regardless of the weights. Krasnoselskii iteration,
    however, is only stable if the weights go to
    zero sufficiently fast as we have seen in \Cref{thm:stability-Krasnoselskii}.
\end{rem}
 \begin{proof}[Proof of {\Cref{prop:Krasnoselskii-as-mann}}]
   \begin{enumerate}[(i)]
   \item
     This is just a simple calculation:
     \begin{align*}
       \overline{x}_{k+1}
       &=\sum_{\ell=0}^{k+1} \pi^{k+1}_\ell x_\ell
         =\sum_{\ell=0}^{k} (1-\omega_k)\pi_\ell^{k} x_\ell + \omega_k x_{k+1}  \\
       &= (1-\omega_k)\overline{x}_k + \omega_k A \overline{x}_k .
        \end{align*}
 \item Let $(x_k)_{k \in \bbN_0}$
    be the solution of the Mann iteration of $A$ 
    with weight sequence 
    $(\pi^k)_{k \in \bbN_0}$ and initial values
    $x_0,\dots,x_{k_0}$ started at time $k_0$.
 We first show that the assumptions imply that $\overline{x}_k \to 0$ if and only if $x_k \to 0$.
   The forward implication follows directly from
   \eqref{eq:yk-eq-A-overlineyk}.
   If $A$ is invertible, then the reverse implication follows
   in the same way.
   For the remaining case assume $x_k \to 0$
   and $\sum_{k=0}^\infty \omega_k = \infty$.
   This directly implies $0\leq \prod_{k=0}^\infty (1-\omega_k) \leq
   \exp(-\sum_{k=0}^\infty \omega_k)=0$.
   By induction we see that for $\ell \leq k$ we have
   \begin{align*}
     \pi^{k}_\ell &= \omega_{\ell-1} \prod_{j=\ell}^{k-1}
                    (1-\omega_j)
   \end{align*}
   where we set $\omega_{-1}:=1$. Hence for every $\ell \in \bbN_0$ we have
   $\lim_{k \to \infty} \pi^k_\ell= 0$.
   Thus for $\eps>0$,
   there is $k_1 \in \bbN_0$ such that
   $\norm{x_k}\leq \frac{\eps}{2}$ for all $k \geq k_1$
   and there is $k_2 \geq k_1$ such that
   $\pi^{k}_\ell \leq \frac{\eps}{2k_1\max\{1,\norm{x_0},\dots,\norm{x_{k_1-1}}\}}$
   for all $k \geq k_2$ and $\ell \in \{0,\dots,k_1-1\}$.
   Then for $k \geq k_2$
   \begin{align*}
     \norm{\overline{x}_k} &= \norm{\sum_{\ell=0}^k \pi^k_\ell x_\ell} \\
     &\leq \frac{\eps}{2}+\frac{\eps}{2}.
   \end{align*}
   Thus $\overline{x}_k \to 0$.

   If $A$ is Krasnoselskii-stable (for brevity we omit the weights
   in the remainder of the proof)
   and $x_k$ is a solution of the Mann iteration of $A$,
   then $\overline{x}_k \to 0$ by \eqref{eq:overlinexk-krassno}
   and hence also $x_k \to 0$ as shown above.

   On the other hand, assume that $A$ is Mann-stable
   and $y_k$ is a solution of
   the Krasnoselskii iteration of $A$
   started at $k_0$.
   There is $\ell \in \{0,\dots,k_0\}$
   such that $\pi^{k_0}_{\ell} \neq 0$.
   Let $x$ be the solution of
   the Mann iteration of $A$ with
      \[x_k = \begin{cases}
     (\pi^{k_0}_{\ell})^{-1} y_{k_0} & \text{if } k = \ell \\
     0 & k \in \{0,\dots,k_0\}\setminus \{\ell\}
      \end{cases}
    \]
    started at time $k_0$. By construction
    $y_{k_0} = \overline{x}_{k_0}$ and  \eqref{eq:overlinexk-krassno}
    shows that $y_k=\overline{x}_k$ for $k \geq k_0$.
    Since $A$ is Mann-stable, we have $\overline{x}_k \to
      0$ as shown above and therefore also
    $y_k \to 0$.
    This proves Krasnoselskii stability of $A$. \qedhere
 \end{enumerate}
\end{proof}

The previous result together with \Cref{thm:stability-Krasnoselskii}
immediately gives:
\begin{cor}
  Let $A \in \bbC^{d \times d}$.
  The classical Mann iteration of $A$ with
  $\pi^k = (\frac{1}{k+1},\dots,\frac{1}{k+1},0,\dots)$
  is stable if and only if
  $\sigma(A) \subseteq \Hone$.
\end{cor}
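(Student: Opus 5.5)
The corollary follows by composing \Cref{prop:Krasnoselskii-as-mann} with \Cref{thm:stability-Krasnoselskii}\,(i). The first step is to identify the Krasnoselskii weight sequence that generates the classical Mann weights through the recursion $\pi^0=\delta^0$, $\pi^{k+1}=(1-\omega_k)\pi^k+\omega_k\delta^{k+1}$. The natural choice is $\omega_k=\frac{1}{k+2}$. We verify by induction on $k$ that this yields $\pi^k_\ell=\frac{1}{k+1}$ for $\ell\le k$ and $\pi^k_\ell=0$ otherwise. The base case $k=0$ is immediate. For the inductive step, if $\pi^k_\ell=\frac1{k+1}$ for $\ell\le k$, then for $\ell\le k$
\[
\pi^{k+1}_\ell=\Bigl(1-\tfrac{1}{k+2}\Bigr)\tfrac{1}{k+1}=\tfrac{1}{k+2},
\]
and $\pi^{k+1}_{k+1}=\omega_k=\frac{1}{k+2}$, while all entries with $\ell>k+1$ vanish. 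This also agrees with \eqref{eq:Mann-Krasnoselskii} from the introduction.

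Next we check the hypotheses of the two results. The sequence $\omega_k=\frac1{k+2}$ lies in $[0,1]$ and converges to $0$. Moreover $\sum_k\omega_k=\sum_{n\ge2}\frac1n=\infty$ by divergence of the harmonic series. This divergence is exactly the alternative assumption in \Cref{prop:Krasnoselskii-as-mann}\,(ii), so no invertibility of $A$ is needed. Hence $A$ is Mann-stable with respect to the classical weights if and only if $A$ is Krasnoselskii-stable with respect to $\omega$. By \Cref{thm:stability-Krasnoselskii}\,(i), the latter holds if and only if $\sigma(A)\subseteq\Hone$.

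There is no real obstacle in this argument. The only point requiring care is the index shift: the weight for the step $k\to k+1$ is $\frac1{k+2}$, not $\frac1{k+1}$. One must also make sure to use the divergence alternative of \Cref{prop:Krasnoselskii-as-mann}\,(ii), because $A$ is allowed to be singular.
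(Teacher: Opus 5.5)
Your proposal is correct and takes the same route as the paper, which obtains the corollary immediately by combining \Cref{prop:Krasnoselskii-as-mann}\,(ii), using the divergence alternative since $A$ may be singular, with \Cref{thm:stability-Krasnoselskii}\,(i). Your induction showing that $\omega_k=\frac{1}{k+2}$ generates the classical weights fills in a detail the paper leaves implicit, and your index bookkeeping is right: the introduction's $\omega_k=\frac{1}{k+1}$ is off by one, which is harmless because only $\omega_k\to 0$ and $\sum_k\omega_k=\infty$ matter.
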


\begin{rem}
  \Cref{exam:non-open-stability-region} together
  with \Cref{prop:Krasnoselskii-as-mann} shows
  that there are Mann-weight sequences
  for which Mann stability of a matrix $A$
  cannot be determined from the spectrum of $A$ alone.
\end{rem}

\section{Moving-Average Iteration}
\label{sec:moving-average-iteration}

We have now reached the core of the paper.
In this and the following three sections we determine the region
of stability for the moving-average iteration scheme with window width $m \in \bbN$ given by
\begin{align}
  \label{eq:recall-moving-average}
  x_{k+1} = A\left(\frac{1}{m} \sum_{\ell=0}^{m-1} x_{k-\ell}\right).
\end{align}
In the notation of the previous section
this corresponds to Mann iteration with
weight sequence
\begin{align*}
  \pi^{k} = \Bigl(\underbrace{0,\dots,0 \vphantom{\frac{1}{m}}}_{k+1-m},\underbrace{\frac{1}{m},\dots,\frac{1}{m}}_{m},0,\dots\Bigr).
\end{align*}
Note that 
the iteration in \eqref{eq:recall-moving-average} is only defined for $k\geq m-1$.
However, in light of \Cref{rem:mann-only-eventual-weights-are-relevant}
it is irrelevant for our stability considerations how
$\pi^0,\dots,\pi^{m-2}$ are defined.

\begin{defn}
  We say that $A \in \bbC^{d \times d}$ is \emph{moving-average-stable}
  for window width $m$
  if it is Mann-stable with respect
  to the Mann-weight sequence
  \[\pi^k = \Bigl(\underbrace{0,\dots,0\vphantom{\frac{a}{b}}}_{k+1-m},\underbrace{\frac{1}{m},\dots,\frac{1}{m}}_m,0,\dots\Bigr),\]
  i.e., if and only if every sequence $(x_k)_{k\in\bbN_0}$ in $\bbC^d$  which eventually satisfies \eqref{eq:recall-moving-average}
  converges to zero.
\end{defn}

We begin by making the autonomous structure of the system
 \eqref{eq:recall-moving-average} explicit.

For $A \in \bbC^{d \times d}$ and $m \in \bbN$ define $\calM_m(A)\in \bbC^{(md) \times (md)}$ by
\begin{align}
\calM_m(A):=  \begin{pmatrix}
    0 & I_d & 0 & \dots &0 \\
    0& 0& I_d & \dots &0 \\
    \vdots &\vdots&&\ddots \\
    0&0&0&&I_d \\
    \frac{1}{m}A&\frac{1}{m}A&\frac{1}{m}A&\dots&\frac{1}{m}A
  \end{pmatrix}.
  \label{eq:def-ma}
\end{align}
Define
\begin{align*}
  \mars_m &:= \{a \in \bbC \setsep \sigma(\calM_m(a)) \subseteq B_{1}(0)
  \},\\
  \mars &:= \bigcup_{m \in \bbN} \mars_m.
\end{align*}
We call $\mars$ and $\mars_m$ the \emph{moving-average region of
  stability  (MARS)} (for window width $m$). 
  For an illustration of the relationship between $a, \mars_m$
and $\sigma(\calM_m(a))$ see \Cref{fig:spectra}.
\begin{figure}
  \begin{center}
\includegraphics[width=\textwidth]{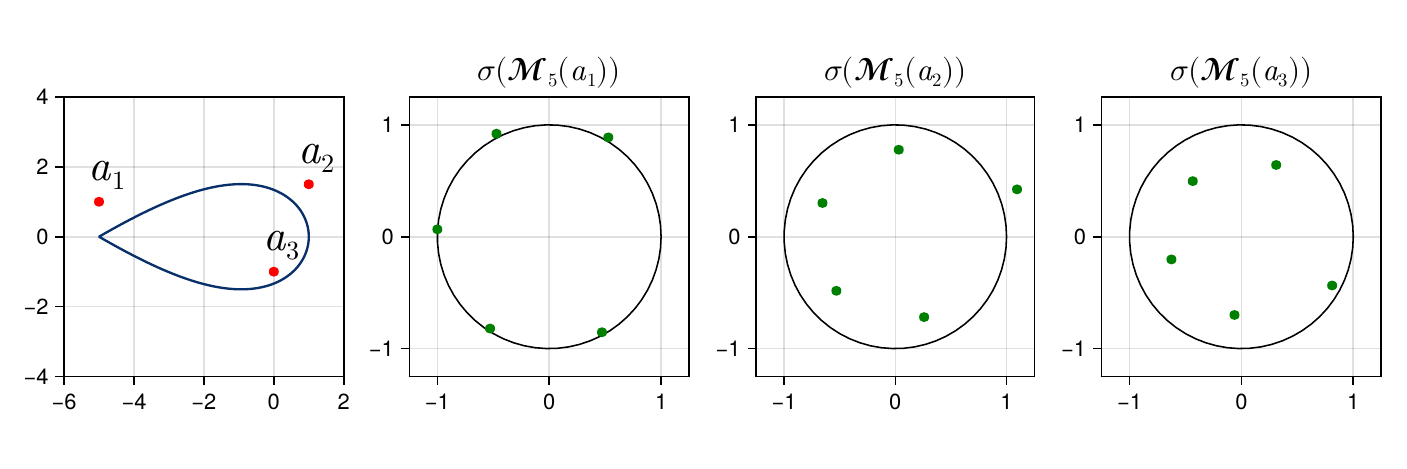}
  \vspace{-1.4cm}
  \end{center}
  \caption{The first picture shows three points $a_1, a_2,a_3$ in relation to $\mars_5$. The corresponding spectra
    of $\calM_5(a_\ell)$ are shown in the following three figures.}
  \label{fig:spectra}
\end{figure}
\begin{thm}
\label{thm:mas-via-mars}
  For $A \in \bbC^{d \times d}$ and $m \in \bbN$ the following are
  equivalent:
  \begin{enumerate}[(i)]
  \item $A$ is moving-average-stable
    for window width $m$.
  \item $\sigma(\calM_m(A)) \subseteq B_1(0)$.
  \item $\sigma(A) \subseteq \mars_m$.
  \end{enumerate}
\end{thm}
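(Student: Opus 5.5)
The plan is to prove (i)$\Leftrightarrow$(ii) by passing to the companion-type state space, and (ii)$\Leftrightarrow$(iii) by computing the characteristic polynomial of $\calM_m(A)$ blockwise.

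\emph{Step 1: (i)$\Leftrightarrow$(ii).} A sequence $(x_k)$ eventually satisfies \eqref{eq:recall-moving-average} for $k\ge k_0$ (with $k_0\ge m-1$) if and only if the stacked vectors $X_k:=(x_{k-m+1},\dots,x_k)^\top\in\bbC^{md}$ satisfy $X_{k+1}=\calM_m(A)X_k$ for $k\ge k_0$. The last block row of $\calM_m(A)$ produces exactly $\frac1m A\sum_{\ell=0}^{m-1}x_{k-\ell}$, and the remaining rows perform the shift. Moreover, $X_{k_0}$ can be prescribed arbitrarily by choosing the initial data $x_{k_0-m+1},\dots,x_{k_0}$. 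Also $x_k\to0$ if and only if $X_k\to0$, since every entry of $X_k$ is some $x_j$ with $k-m+1\le j\le k$. Hence moving-average stability is equivalent to $\calM_m(A)^nX\to0$ for all $X\in\bbC^{md}$. By the standard Jordan-form argument, this holds if and only if $\calM_m(A)$ is Schur stable, i.e.\ (ii). By \Cref{rem:mann-only-eventual-weights-are-relevant} the undefined early weights do not matter.

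\emph{Step 2: characteristic polynomial.} I would show that $\det(\lambda I-\calM_m(A))=\det\bigl(\lambda^m I-\frac1m A(1+\lambda+\dots+\lambda^{m-1})\bigr)$. The first route is to write $\calM_m(A)=S\otimes I_d+e_m\mathbf 1^\top\otimes\frac1m A$, where $S$ is the shift. Then triangularize $A=UTU^*$ (Schur decomposition) and conjugate by $I_m\otimes U$. After a permutation this makes $\calM_m(A)$ block upper triangular with diagonal blocks $\calM_m(t_{ii})$, the scalar companion matrices. Consequently
\[\sigma(\calM_m(A))=\bigcup_{a\in\sigma(A)}\sigma(\calM_m(a)).\]
Alternatively, one can note that $\lambda$ is an eigenvalue of $\calM_m(A)$ with eigenvector $(v,\lambda v,\dots,\lambda^{m-1}v)$ precisely when $\lambda^m v=\frac1m(1+\dots+\lambda^{m-1})Av$. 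Combined with a multiplicity count, or with the determinant formula obtained by block column operations, this gives the same spectral identity.

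\emph{Step 3: (ii)$\Leftrightarrow$(iii).} With the identity from Step 2, $\sigma(\calM_m(A))\subseteq B_1(0)$ holds if and only if $\sigma(\calM_m(a))\subseteq B_1(0)$ for every $a\in\sigma(A)$, which is exactly $\sigma(A)\subseteq\mars_m$ by definition.

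The main obstacle is the permutation/triangularization in Step 2. It requires checking that reordering the basis from block-of-$d$ to block-of-$m$ turns $(I_m\otimes U^*)\calM_m(A)(I_m\otimes U)=S\otimes I+e_m\mathbf 1^\top\otimes\frac1m T$ into $\bigl(\delta_{ij}S+e_m\mathbf1^\top \frac1m t_{ij}\bigr)_{i,j}$. This matrix is block upper triangular because $T$ is upper triangular, so the spectrum is the union over the diagonal blocks.
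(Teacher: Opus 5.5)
Your proof is correct. Steps 1 and 3 coincide with the paper's argument: the stacking identity $X_{k+1}=\calM_m(A)X_k$ gives (i)$\Leftrightarrow$(ii), and (ii)$\Leftrightarrow$(iii) is reduced to $\sigma(\calM_m(A))=\bigcup_{a\in\sigma(A)}\sigma(\calM_m(a))$.

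The difference is how you prove that identity. The paper does not triangularize. It notes that, by the shift structure, every eigenvector of $\calM_m(A)$ for $\lambda$ has the form $(1,\lambda,\dots,\lambda^{m-1})\otimes w$ with $\frac1m\sum_{\ell=0}^{m-1}\lambda^\ell Aw=\lambda^m w$. It then applies the spectral mapping theorem to the affine polynomial $z\mapsto\frac1m\sum_{\ell}\lambda^\ell z-\lambda^m$, which produces $a\in\sigma(A)$ with $p_{m,a}(\lambda)=0$. This is essentially your ``alternative''. Note that no multiplicity count is needed there, since only equality of sets is at stake.

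Your main route via the Schur decomposition is also sound. Conjugating by $I_m\otimes U$ and then applying the commutation permutation does yield the block upper triangular matrix $\bigl(\delta_{ij}S+\frac1m t_{ij}e_m\mathbf 1^\top\bigr)_{i,j}$, whose diagonal blocks are the companion matrices $\calM_m(t_{ii})$.

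What each route buys:
\begin{itemize}
\item Your route gives more. It yields the full factorization $\det(\lambda I-\calM_m(A))=\prod_{i=1}^d p_{m,t_{ii}}(\lambda)$, so algebraic multiplicities are tracked. The cost is the permutation bookkeeping and an essential reliance on finite dimensionality.
\item The paper's eigenvector argument is shorter. It is also exactly what gets upgraded to approximate eigenvectors in Lemma~\ref{lem:connection-spectra} for the Banach-space version, Theorem~\ref{thm:mars-banach-space}, where no triangular form is available.
\end{itemize}
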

\begin{proof}
  The equivalence between (i) and (ii)
  is an immediate consequence of the following observation:
  If $(y_k)_{k \in \bbN_0}$ is the solution of
  moving-average iteration of $A$ with
  window width $m$ started
  at time $k_0 \geq m-1$ with initial values
  $y_0,\dots,y_{k_0} \in \bbC^d$,
  then
  \begin{align*}
    (y_{k-m+1}^\top,\dots,y_{k}^\top)^\top=\calM_m(A)(y_{k-m}^\top,\dots,y_{k-1}^\top)^\top \text{ for } k > k_0.
  \end{align*}
  The equivalence between (ii) and (iii) follows directly
  from
  \begin{align*}
    \sigma(\calM_m(A)) = \bigcup_{a \in \sigma(A)} \sigma(\calM_{m}(a)).
  \end{align*}
  To see this, notice that the eigenvectors $v$ of $\calM_m(A)$ corresponding
  to the eigenvalue $\lambda$ are precisely the vectors
  of the form $(\lambda^0,\dots,\lambda^{m-1}) \otimes  w$
  for some $w \in \bbC^d, w\neq 0$, with 
  \begin{align}
    \label{eq:lambda-condition}
    \frac{1}{m} \sum_{\ell=0}^{m-1}
    \lambda^\ell A w = \lambda^m w.
  \end{align}
  Now the existence of $w \neq 0$, for
  which \eqref{eq:lambda-condition} holds, 
  is equivalent to 
  \begin{align*}
    0 \in \sigma\left(\frac{1}{m} \sum_{\ell=0}^{m-1}
    \lambda^\ell A-\lambda^mI \right).
  \end{align*}
  By the spectral mapping theorem for polynomials, this is in  turn
  equivalent to the existence of $a \in \sigma(A)$ with
  \begin{align*}
    \lambda^m = \frac{1}{m} \sum_{\ell=0}^{m-1}
    \lambda^\ell a,
  \end{align*}
  i.e. $\lambda \in \sigma(\calM_m(a))$.
\end{proof}

In the following, we will often restrict attention
to $a$ in the open upper half-plane. We therefore define
\begin{align*}
  \mars_m^+ &:= \{a \in \bbC \setsep a \in \mars_m,\; \imag(a)>0 \},\\
  \mars_m^- &:= \{a \in \bbC \setsep a \in \mars_m,\; \imag(a)<0 \}.
\end{align*}

In the one-dimensional case $\calM_m(a)$ is a companion matrix,
so its characteristic polynomial can be directly read off as
\begin{equation}
 p_{m,a}(s) := s^m - \frac{a}{m}\sum_{\ell=0}^{m-1} s^\ell = s^m - \frac{a}{m}
  \frac{s^m-1}{s-1}.
  \label{eq:pma}    
\end{equation}

Multiplication by $(s-1)$ gives
\begin{align}
  q_{m,a}(s) := (s-1)p_{m,a}(s) = s^{m+1} -\left(1+\frac{a}{m}\right) s^m + \frac{a}{m}.
  \label{eq:qma}
\end{align}

The companion matrix of the polynomial $q_{m,a}(s)$ is 
  \begin{align*}
    \calQ_{m}(a):=\begin{pmatrix}
    0 & 1 & 0 & \cdots &0&0 \\
    0& 0& 1 & \cdots &0&0 \\
    \vdots &\vdots&&\ddots &&\vdots\\
    0&0&0&&1&0 \\
    0&0&0&\cdots&0&1 \\
    -\frac{a}{m}&0&0&\cdots&0&1+\frac{a}{m}.
  \end{pmatrix}                            
  \end{align*}
and the corresponding linear recurrence relation is
\begin{align}
  x_{k+1}=\left(1+\frac{a}{m}\right) x_k -\frac{a}{m} x_{k-m}.
  \label{eq:sparse-recurrence-equation}
\end{align}
The asymptotic stability of such delay difference equations
has received significant attention in the literature.
The following is a chronological overview of increasingly general settings in which necessary and/or sufficient stability conditions have been found:
\begin{align*}
  x_{k+1}&=x_k - \beta x_{k-m} && \beta \in \bbR&& \text{  \cite{levinNoteDifferencedelayEquations1976}};\\
  x_{k+1} &= \alpha x_k - \beta x_{k-m} && \alpha, \beta \in\bbR&&
\text{
\cite{kuruklisAsymptoticStabilityXn1994}};\\
  x_{k+1} &= \alpha x_k + Bx_{k-m} && \alpha \in \{+1,-1\}, B \in \bbR^{d \times d}&& 
  \text{
\cite{kipnisStabilityDelayDifference2006}};\\
  x_{k+1} &= \alpha x_k + Bx_{k-m} && \alpha \in (0,1), B \in \bbR^{d \times d}&& 
  \text{
\cite{kaslikStabilityResultsClass2009}}; \\
   x_{k+1} &= \alpha x_k + \beta x_{k-m} && \alpha, \beta \in \bbC&& 
   \text{
\cite{cermakStabilitySwitchesLinear2014}}; \\        
    x_{k+1} &= A x_k + Bx_{k-m} && A, B \in \bbR^{d \times d}&& 
    \text{
\cite{diblikj.StabilityExponentialStability2015}}. 
\end{align*}
Note, however, that \eqref{eq:sparse-recurrence-equation}
is never asymptotically stable in $0$, since every constant sequence is a solution.
This is due to the fact that we added $\lambda=1$ as a root 
when we passed from \eqref{eq:pma} to \eqref{eq:qma}.
Hence, while \cite{cermakStabilitySwitchesLinear2014} covers our
equation, the corresponding stability results are not 
applicable to our case.

Summing up the previous consideration we obtain
\begin{prop}[Characterization of $\mars_m$ via roots of polynomials]
  \begin{align*}
    \mars_m &= \{a \in \bbC \setsep p_{m,a} \text{ has all roots  in }
    B_1(0)\} \\
    &= \{a \in \bbC \setsep q_{m,a} \text{ has all but one root in }
      B_1(0)\}.
  \end{align*}
\end{prop}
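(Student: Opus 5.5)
The plan is to verify both equalities directly from the definitions, using the companion-matrix structure already noted.

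For the first equality, the definition gives $a \in \mars_m$ if and only if $\sigma(\calM_m(a)) \subseteq B_1(0)$. For $d=1$ the matrix $\calM_m(a)$ is the companion matrix of $p_{m,a}$. This can be confirmed by the eigenvector computation in the proof of \Cref{thm:mas-via-mars}: with $A=a$ scalar, $\lambda$ is an eigenvalue exactly when $\lambda^m = \frac{a}{m}\sum_{\ell=0}^{m-1}\lambda^\ell$, i.e. $p_{m,a}(\lambda)=0$. Hence $\sigma(\calM_m(a))$ is precisely the root set of $p_{m,a}$, and the first equality follows. Multiplicities play no role, since only the location of the roots matters.

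For the second equality, I would use $q_{m,a}(s) = (s-1)p_{m,a}(s)$ from \eqref{eq:qma}. Counted with multiplicity, the roots of $q_{m,a}$ are those of $p_{m,a}$ together with one additional root $s=1$, which lies outside $B_1(0)$.

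\emph{Forward direction.} If all $m$ roots of $p_{m,a}$ lie in $B_1(0)$, then exactly $m$ of the $m+1$ roots of $q_{m,a}$ lie in $B_1(0)$. The remaining one is $1 \notin B_1(0)$.

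\emph{Reverse direction.} Suppose all but one root of $q_{m,a}$ (counted with multiplicity) lie in $B_1(0)$. The root $1$ is not in $B_1(0)$, so it must be the exceptional root. Removing the factor $(s-1)$ then leaves the roots of $p_{m,a}$, all in $B_1(0)$.

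The only subtle point is the reading of ``all but one root''. If $1$ were a root of $p_{m,a}$, then $q_{m,a}$ would have $1$ as a double root, and ``all but one'' would fail when multiplicity is counted. This matches the first set, because such $a$ are not in $\mars_m$. I would state explicitly that roots are counted with multiplicity. Note also that $p_{m,a}(1)=1-a$, so this degenerate case occurs only for $a=1$. There is no real obstacle here; the argument is bookkeeping.
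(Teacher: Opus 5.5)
Your proposal is correct and follows the paper's route: $\calM_m(a)$ is the companion matrix of $p_{m,a}$, and $q_{m,a}=(s-1)p_{m,a}$ adds only the root $s=1\notin B_1(0)$. Your observation that roots must be counted with multiplicity makes explicit a point the paper leaves implicit; it is needed so that $a=1$, where $p_{m,1}(1)=0$ and $1$ becomes a double root of $q_{m,1}$, is excluded from the second set.
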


By Perron-Frobenius theory, the Gauß-Lucas theorem, and some elementary real algebraic geometry we obtain the following.
\begin{prop}[Simple properties of $\mars_m$ and $\mars$]
  \label{prop:simple-properties}
  For $m \in \bbN$ we have
  \begin{enumerate}[(a)]    
    \item $\mars_m \cap [0,\infty) = [0,1)$.\label{item:mars-reals-right}
    \item $(-m,0) \subseteq \mars_m$, hence $(-\infty,0) \subseteq \mars$.\label{item:mars-real-left}
    \item $\mars \subseteq \Hone$.\label{item:mars-is-left-of-one}
    \item $\mars_m^-=(\mars_m^+)^*:= \{ \overline{a} \setsep a \in \mars_m^+\}$. 
      \label{item:decomp-mars}
    \item $\mars_m$ is open. \label{item:mars-is-open}
    \item $\mars_m$ is semi-algebraic\footnote{We refer to \cite{bochnak2013real} for fundamentals of real algebraic geometry. We use the standard identification of $\bbC$ with $\bbR^2$ to refer to subsets of $\bbC$ as semi-algebraic, if their real representation in $\bbR^2$ is.} in $\bbC$ and $\mars$ is semi-analytic but not semi-algebraic.
    \label{item:mars-is-semialgebraic}
  \end{enumerate}
\end{prop}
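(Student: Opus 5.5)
We go through the items in order. Each uses only the polynomials $p_{m,a}$, $q_{m,a}$ from \eqref{eq:pma}, \eqref{eq:qma} and the characterization of $\mars_m$ via roots given just above.

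For \itemref{item:mars-reals-right} we use Perron--Frobenius. For $a>0$ the matrix $\calM_m(a)$ is nonnegative and irreducible, so its spectral radius equals its Perron root $\rho>0$, and $\rho$ is a root of $p_{m,a}$. For $s>0$ we rewrite $p_{m,a}(s)=0$ as $a=g(s):=m s^m/\sum_{\ell=0}^{m-1}s^\ell = m/\sum_{j=1}^{m}s^{-j}$. The function $g$ is strictly increasing on $(0,\infty)$ with $g(1)=1$. Hence $p_{m,a}$ has exactly one positive root, which must be $\rho$, and $\rho<1$ holds iff $a<1$. The case $a=0$ is trivial, since then $\sigma(\calM_m(0))=\{0\}$.

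For \itemref{item:mars-real-left} we write $a=-mc$ with $c\in(0,1)$. Then $q_{m,a}(s)=s^{m}(s-1+c)-c$. A root $\lambda$ of $p_{m,a}$ with $\abs{\lambda}\ge 1$ would satisfy $\abs{\lambda}^m\abs{\lambda-(1-c)}=c$. On the other hand $\abs{\lambda-(1-c)}\ge\abs{\lambda}-(1-c)\ge c$, with equality in both places only when $\lambda=1$. But $p_{m,a}(1)=1-a\neq 0$, so no such root exists. This gives $(-m,0)\subseteq\mars_m$, and taking the union over $m$ gives $(-\infty,0)\subseteq\mars$.

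For \itemref{item:mars-is-left-of-one} we apply Gauss--Lucas to $q_{m,a}$. Its derivative is $q_{m,a}'(s)=s^{m-1}\bigl((m+1)s-(m+a)\bigr)$, so $(m+a)/(m+1)$ is a critical point. If $a\in\mars_m$, all roots of $q_{m,a}$ lie in $B_1(0)\cup\{1\}$. Therefore this critical point lies in the closed unit disk, i.e.\ $\abs{a+m}\le m+1$. The closed disk $\overline{B_{m+1}(-m)}$ meets $\{\real a\ge 1\}$ only in the point $a=1$, and $1\notin\mars_m$ by \itemref{item:mars-reals-right}. Hence $\mars_m\subseteq\Hone$ for every $m$.

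Items \itemref{item:decomp-mars} and \itemref{item:mars-is-open} are routine. For \itemref{item:decomp-mars}, $p_{m,\overline a}(\overline s)=\overline{p_{m,a}(s)}$, so the roots for $\overline{a}$ are the conjugates of the roots for $a$ and have the same moduli. For \itemref{item:mars-is-open}, the roots of the monic polynomial $p_{m,a}$ depend continuously on $a$, so the condition $\max\abs{\text{root}}<1$ is open.

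For the first half of \itemref{item:mars-is-semialgebraic}, $\mars_m$ is defined by the first-order formula
\[
\forall s\in\bbC:\ p_{m,a}(s)=0\Rightarrow \abs{s}^2<1
\]
in the real coordinates of $a$ and $s$. Tarski--Seidenberg then shows $\mars_m$ is semi-algebraic.

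The claims about $\mars$ itself are the main obstacle, since they depend on the explicit description of $\mars$ announced in the introduction, which is established only later (\Cref{sec:monotonicity}). I would take that formula as given. Semi-analyticity then follows because the description uses only the real-analytic inequality $\xrep<\yimp\cot\yimp$ on the strips $0<\pm\yimp<\pi$, together with the ray $(-\infty,1)$.

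To show $\mars$ is not semi-algebraic, suppose it were. Then its boundary would be semi-algebraic, so some arc of the curve $\xrep=\yimp\cot\yimp$ would lie in the zero set of a nonzero polynomial $P$. The identity theorem would then give $P(z\cot z,z)\equiv 0$ as a meromorphic function of $z$. This would make $z\cot z$ an algebraic function of $z$, which has only finitely many poles. But $z\cot z$ has infinitely many poles, at $z=k\pi$ for $k\neq 0$, a contradiction.
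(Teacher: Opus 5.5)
Your proposal is correct. For (c), (d), (e) and the semi-algebraicity of $\mars_m$ it is essentially the paper's argument: Gauss--Lucas with the critical point $\frac{m+a}{m+1}$ of $q_{m,a}$, conjugation symmetry, continuity of roots, and Tarski--Seidenberg.

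You genuinely differ in (a), (b) and the last claim of (f).

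\textbf{Item (a).} The paper compares $\calM_m(a)$ entrywise with $\calM_m(1)$, whose spectral radius is $1$ because of the all-ones eigenvector. It then invokes strict monotonicity of the Perron root for primitive matrices. You use only that the spectral radius of a nonnegative matrix is an eigenvalue. You then identify it as the unique positive root of $p_{m,a}$ via the strictly increasing $g(s)=m/\sum_{j=1}^m s^{-j}$ with $g(1)=1$. This avoids the comparison theorem and even yields $\rho(\calM_m(a))=g^{-1}(a)$.

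\textbf{Item (b).} The paper applies Perron--Frobenius to $\calQ_m(a)$. For $a\in(-m,0)$ this matrix is nonnegative, primitive and row-stochastic, so $1$ is a simple, strictly dominant eigenvalue. Your reverse-triangle-inequality estimate on $q_{m,a}(s)=s^m(s-(1-c))-c$ is fully elementary. It also shows why the argument breaks down at $a=-m$, where $1-c=0$. To close your chain you first need $\abs{\lambda}^m\ge 1$ to get $\abs{\lambda-(1-c)}\le c$. This is implicit and should be stated.

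\textbf{Item (f), the claims about $\mars$.} The paper only points ahead to the explicit formula, so your pole argument supplies a step the paper omits. An arc of $\xrep=\yimp\cot\yimp$ lying on $\{P=0\}$ forces $P(z\cot z,z)\equiv 0$. Then every pole $k\pi$, $k\neq 0$, must be a zero of the leading coefficient of $P$ in its first variable. That degree must be positive, since otherwise $P$ would vanish on an interval in $z$. Relying on the later formula is not circular, because that formula only uses (a)--(e).

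For semi-analyticity, clear denominators and write the two strips as
$\{0<\pm\yimp<\pi,\ \pm(\yimp\cos\yimp-\xrep\sin\yimp)>0\}$, which is the paper's $\imag(e^a\overline{a})<0$. The reason is that $\yimp\cot\yimp$ is not analytic across $\yimp=\pm\pi$, and semi-analyticity is a local condition at every point of $\bbR^2$.
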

\begin{proof}
  Applying Perron-Frobenius theory directly to $\calM_m(a)$ gives us
  (a). Indeed, for $a>0$ the matrix
  $\calM_m(a)$ is nonnegative and primitive.
  Therefore $\rho(\calM_m(1))=1$, as we have an eigenvector
  of all ones in this case.
  Now for $a>1$, $\calM_m(a) \gneqq \calM_m(1)$ and hence
  $\rho(\calM_m(a)) > \rho(\calM_m(1)) = 1$, and on the other hand for
  $a<1$ we have
  $\calM_m(a) \lneqq \calM_m(1)$ and so $\rho(\calM_m(a)) < \rho(\calM_m(1)) = 1$.

  For (b) consider the matrix $\calQ_m(a)$.
  If $a \in(-m,0)$, then this matrix is nonnegative and primitive
  and since the all ones vector is a strictly positive
  eigenvector for the eigenvalue $1$, it has spectral radius $1$.

  Hence, by the Perron-Frobenius theorem, for $a \in (-m,0)$, the polynomial $q_{m,a}$
  has a simple root at $1$ and all other roots have
  absolute value smaller than $1$. This shows $(-m,0) \subseteq \mars_{m}$.

  For (c) consider $a \in \bbC$ with $a\neq 1$ and $\real a\geq 1$.
  The polynomial \[q_{m,a}'(s)=\frac{d}{ds} q_{m,a}(s) = (m+1)s^m - (m+a)s^{m-1}\]
  has a root of multiplicity $m-1$ at $0$ and
  a simple root at $\frac{m+a}{m+1}$.
  Since by the Gauß-Lucas theorem
  the roots of $q_{m,a}'$ are contained
  in the convex hull of the roots of $q_{m,a}$,
  there exists a root of $q_{m,a}$ with
  absolute value at least $\abs{\frac{m+a}{m+1}}$.
  We assume $\real a \geq 1$, hence $\abs{a-(-m)} \geq \abs{1-(-m)}$
  for all nonnegative $m$. Therefore $\mars \subseteq \Hone$.

  We have $\sigma(\calM_m(\overline{a}))=\sigma(\overline{\calM_m(a)})=\sigma(\calM_m(a))^*$.
  Combined with the fact that  $B_1(0)$ is invariant under conjugation, the set $\mars_m$ is
  invariant under conjugation as well. This shows (d).

  For (e) let $X$ be the space of
  closed subsets of $\bbC$ endowed with the Hausdorff topology.
  Notice that $\sigma(\calM_m(a)) \in X$ depends
  continuously on $a$ which follows, for example, from Elsner's theorem,
  see e.g. \cite[Theorem 1.3]{stewartMatrixPerturbationTheory1990}.
  Since $\{M \subseteq B_1(0) \setsep M \text{ is closed}\}$ is open
  in $X$, the set $\mars_m$ is open as well.

  The claim (f) is a standard application of the Tarski-Seidenberg theorem on quantifier elimination, \cite[Theorem 1.4.2]{bochnak2013real}.
  Using this result, we first note that the following set is semialgebraic:
  \begin{align*}
      K := \{ (\xrep, \yimp, \alpha,\beta) \in \bbR^4 \setsep  &\exists (v,w) \in \bbR^{2md} : (v,w) \neq (0,0) \ \wedge \\
      &\calM_m(\xrep + \iu \yimp) (v+\iu w) =  (\alpha + \iu \beta) (v+\iu w) \}. 
  \end{align*}
  $\mars_m$ is the complement of the set
  \begin{equation*}
     \left\{ \xrep + \iu \yimp \in \bbC \setsep   \exists (\alpha,\beta) \in \bbR^2 : (\xrep, \yimp, \alpha,\beta) \in K \wedge \alpha^2 + \beta^2 \geq 1 \right\} .
  \end{equation*}
  The latter set is again semialgebraic by the Tarski-Seidenberg theorem and as complements of semi-algebraic sets are semi-algebraic the result for $\mars_m$ follows. The claim for $\mars$ will follow from the concrete description obtained in \Cref{cor:explicitformulaasymptoticmars}. 
\end{proof}

At this point one might suspect that $\mars$ just equals $\Hone$.
To our surprise, this turned out not to be the case.
Using the Schur-Cohn method, we will see in the following sections that $\mars$ is the proper subset
of $\Hone$ depicted in \Cref{fig:stability-regions}.

\section{Explicit Formulas for MARS Using Schur-Cohn}
\label{sec:Schur-Cohn}

In this section we derive explicit formulas for $\mars_m$ and $\mars$. Our main tool will be the classical Schur-Cohn characterization of Schur stability of matrices. We begin by recalling this criterion. A brief discussion about the characterization of positive semidefiniteness of Hermitian matrices related to our case will be required until we get to the main result of this section.

A version of the Schur-Cohn theorem applicable to our use case can be found in
\cite[Theorem 3.4.89]{hinrichsenMathematicalSystemsTheory2010}. To state it, we need some additional notation.  For a polynomial
\begin{equation}
\label{eq:Schur-Cohn-poly}
    p(s) =  \sum_{\ell=0}^n a_\ell s^{\ell}  \in \bbC[s]
\end{equation}
we denote by
$p^*(s) := \sum_{\ell=0}^n \overline{a_\ell}s^{n-\ell} 
= s^n \overline{p}(1/s)$
the \emph{Schur-reflection of $p$ of order $n$}, see \cite[Definition
3.4.77]{hinrichsenMathematicalSystemsTheory2010}. Note that $\lambda \neq 0$ is a root of $p$ if and only if
$({\overline{\lambda}})^{-1}$
is a root of $p^*$. 
Given $p(s)$ as in \eqref{eq:Schur-Cohn-poly} and following
\cite[Definition 3.4.85]{hinrichsenMathematicalSystemsTheory2010}, we define matrices
\begin{equation*}
    U(p) := \begin{bmatrix}
        \overline{a}_n & \ldots &\overline{a}_{2} & \overline{a}_1 \\
        0 &\ddots& \ddots & \overline{a}_2\\
        \vdots& \ddots & \ddots &\vdots\\
        0 & \ldots &0& \overline{a}_{n}
    \end{bmatrix}, \quad  L(p):= \begin{bmatrix}
        0 & \ldots &0 & a_0 \\
        \vdots && \iddots & a_1\\
        0& \iddots & \iddots &\vdots\\
        a_0 & a_1 &\ldots & a_{n-1}
    \end{bmatrix},
\end{equation*}
and the Schur matrix $S(p) := U(p)^* U(p) - L(p)^*L(p)$.

\begin{thm}[Schur-Cohn]
  \label{thm:Schur-Cohn}
  Let $q(s) \in \bbC[s]$ be a polynomial of degree $n$
  with Schur matrix $S(q)$. Let $r$ be the rank of $S(q)$
  and $\sigma$ the signature of $S(q)$, i.e.
  the number of positive eigenvalues (counted with multiplicity)
  minus the number of negative eigenvalues.
  Then the polynomial $q(s)$ has $n-r$ roots (again counted with multiplicity) in common with $q^*(s)$,
  $(r+\sigma)/2$ additional roots with modulus smaller than $1$
  and $(r-\sigma)/2$ additional roots with modulus larger than $1$.
\end{thm}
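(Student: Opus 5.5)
Since this is the classical Schur--Cohn theorem quoted from \cite[Theorem 3.4.89]{hinrichsenMathematicalSystemsTheory2010}, one could simply cite it. If a self-contained argument is wanted, I would prove it by interpreting $S(q)$ as the Gram matrix of a Hermitian form and then applying Sylvester's law of inertia.

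\textbf{Step 1: $S(q)$ as a generating kernel.} Write $q(s)=a_n\prod_{k=1}^n(s-\lambda_k)$. I would first check, via a Gohberg--Semencul type identity for triangular Toeplitz matrices, that for $x\in\bbC^n$ with associated vector $\mathbf{s}=(1,s,\dots,s^{n-1})^\top$ one has
\begin{align*}
  \mathbf{t}^* S(q)\, \mathbf{s} \;=\; \frac{q^*(s)\overline{q^*(t)}-q(s)\overline{q(t)}}{1-s\overline{t}} =: K_q(s,t),
\end{align*}
possibly up to transposition or conjugation conventions. The point is that $U(q)$ and $L(q)$ are the truncated multiplication operators by $q^*$ and $q$, and the numerator vanishes on $s\overline t=1$. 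This identifies $S(q)$ with the Hermitian form $K_q$ on polynomials of degree $<n$, so rank and signature of $S(q)$ are those of $K_q$.

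\textbf{Step 2: inductive decomposition of the kernel into squares.} For a linear factor $\varphi(s)=s-\lambda$ with $\varphi^*(s)=1-\overline\lambda s$, a direct computation gives
\begin{align*}
  \varphi^*(s)\overline{\varphi^*(t)}-\varphi(s)\overline{\varphi(t)}=(1-\abs{\lambda}^2)(1-s\overline t).
\end{align*}
Writing $q=\varphi r$ with $q^*=\varphi^* r^*$, this yields the recursion
\begin{align*}
  K_q(s,t)=(1-\abs{\lambda}^2)\,r^*(s)\overline{r^*(t)}+\varphi(s)\overline{\varphi(t)}K_r(s,t).
\end{align*}
Iterating this recursion gives
\begin{align*}
  K_q(s,t)=\abs{a_n}^2\sum_{k=1}^n(1-\abs{\lambda_k}^2)f_k(s)\overline{f_k(t)},
\end{align*}
where
\begin{align*}
  f_k(s)=\prod_{j<k}(s-\lambda_j)\prod_{j>k}(1-\overline{\lambda_j}s)
\end{align*}
is a polynomial of degree at most $n-1$.

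\textbf{Step 3: the generic case.} Suppose no root $\lambda_k$ satisfies $\lambda_k\overline{\lambda_j}=1$ for any $j$; equivalently, $q$ and $q^*$ have no common roots. Then I would show that $f_1,\dots,f_n$ are linearly independent. One way is to evaluate at suitable roots, which gives a triangular structure; alternatively, use that their span is the model space of $q$. Sylvester's law of inertia then gives $r=n$, and $\sigma$ equals the number of $\abs{\lambda_k}<1$ minus the number of $\abs{\lambda_k}>1$. This is the claim when there are no common roots.

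\textbf{Step 4: common roots (main obstacle).} Let $g=\gcd(q,q^*)$, which is self-reciprocal up to a unimodular constant, and write $q=gh$. Then $K_q(s,t)=g(s)\overline{g(t)}K_h(s,t)$ up to a positive constant, because $g^*$ is a unimodular multiple of $g$. This shows the form is supported on the subspace $g\cdot\bbC[s]_{<\deg h}$, so the rank is at most $\deg h=n-(n-r)$ and the inertia agrees with that of $K_h$. Step 3 applied to $h$, which has no roots in common with $h^*$, then finishes the proof.

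The delicate points are two: making the rank statement exact, i.e.\ showing the kernel of $S(q)$ has dimension exactly $\deg g$, including unit-circle roots and multiplicities; and keeping the index conventions in Step 1 straight. I would handle the first by applying the identity of Step 1 to $h$ and using injectivity of multiplication by $g$.
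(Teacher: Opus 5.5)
Your proposal is correct. It differs from the paper only in that the paper gives no proof at all: it quotes the theorem from \cite[Theorem 3.4.89]{hinrichsenMathematicalSystemsTheory2010}, exactly as your first sentence suggests. Your self-contained argument has four parts: the kernel representation of $S(q)$, the sum-of-squares decomposition obtained from the factorization of $q$, Sylvester's law of inertia, and the reduction of the degenerate case to the coprime one via the self-reciprocal factor $g=\gcd(q,q^*)$. Each part goes through. One has $g^*=cg$ with $\abs{c}=1$, $\gcd(h,h^*)=1$, and $h$ has no unimodular roots. The $f_k$ are independent even for repeated roots: take the smallest $k_0$ with nonzero coefficient, divide by $\prod_{j<k_0}(s-\lambda_j)$, and evaluate at $\lambda_{k_0}$. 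Note that your Step 4 is not a side case for this paper. The polynomials $q_{m,a}$ and $q_{m,a}^*$ always share the root $s=1$, so the paper's application (the corollary characterizing $\mars_m$ by rank $m$ and signature $m$) lies entirely in the degenerate situation.

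The hedge in Step 1 can be dropped. Write $T_p$ for the $n\times n$ lower triangular Toeplitz matrix with first column $(p_0,\dots,p_{n-1})^\top$ and $J$ for the flip matrix. The paper's conventions give $U(q)=T_{q^*}^\top$ and $L(q)=T_qJ$, hence $S(q)=\overline{T_{q^*}T_{q^*}^*-T_qT_q^*}$. The matrix $T_{q^*}T_{q^*}^*-T_qT_q^*$ is exactly the coefficient matrix of $K_q$: its entry $(i,j)$ is the coefficient of $s^i\overline{t}^j$. This holds because your Step 2 recursion shows $K_q$ has degree at most $n-1$ in each variable, so only the leading block of the infinite Toeplitz products survives. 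Thus $\mathbf{t}^*S(q)\mathbf{s}=K_q(s,t)$ holds exactly as you wrote it.

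Compared with the citation, your route costs this index bookkeeping. In return it gives transparency and a by-product the paper verifies by direct computation. From $K_q(\mu,t)=g(\mu)\overline{g(t)}K_h(\mu,t)$ you get $(1,\mu,\dots,\mu^{n-1})^\top\in\ker S(q)$ for every common root $\mu$. This explains why $(1,\dots,1)^\top$ lies in the kernel of $S(q_{m,a})$.
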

Applied to $q_{m,a}$, which is a polynomial of degree $m+1$, we obtain:
\begin{cor}
  \label{cor:MARS-via-Schur-matrix}
    $a \in \mars_m$ if and only if $S(q_{m,a})$ has rank $m$ and signature $m$.
\end{cor}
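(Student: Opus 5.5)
We deduce \Cref{cor:MARS-via-Schur-matrix} from \Cref{thm:Schur-Cohn} applied to $q=q_{m,a}$, which has degree $n=m+1$. We also use the characterization of $\mars_m$ through $q_{m,a}$: $a\in\mars_m$ if and only if $p_{m,a}$ has all $m$ roots in $B_1(0)$. Since $q_{m,a}=(s-1)p_{m,a}$, this says $q_{m,a}$ has $m$ roots in $B_1(0)$ plus the root $1$.

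The key observation concerns common roots of $q$ and $q^*$. A root $\lambda$ of $q$ is shared with $q^*$ exactly when $(\overline{\lambda})^{-1}$ is also a root of $q$. In particular, every root on the unit circle is a common root. So $s=1$ always yields at least one common root, and therefore $n-r\geq 1$, i.e.\ $r\leq m$, for every $a$.

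For the forward direction, let $a\in\mars_m$. The roots of $q_{m,a}$ are $1$ together with $m$ roots of modulus less than $1$. None of the roots inside the disc is common with $q^*$: its reflection $(\overline{\lambda})^{-1}$ lies outside the disc, or is $\infty$ when $\lambda=0$. One has to check that the multiplicity count in \Cref{thm:Schur-Cohn} gives exactly one common root here, coming from the simple root $1$. The root $1$ is simple because $p_{m,a}(1)=1-a\neq 0$, as $a\ne 1$ by \Cref{prop:simple-properties}. Hence $n-r=1$, so $r=m$. The remaining $m$ roots all lie inside the disc, so $(r+\sigma)/2=m$, which forces $\sigma=m$.

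For the converse, suppose $r=\sigma=m$. Then $n-r=1$, so $q$ and $q^*$ share exactly one root counted with multiplicity; this root is necessarily $s=1$. Moreover there are $(r+\sigma)/2=m$ further roots in $B_1(0)$ and $(r-\sigma)/2=0$ roots outside the closed disc. Since $q$ has degree $m+1$, its roots are $1$ and $m$ roots in $B_1(0)$. Dividing by $(s-1)$ shows that all roots of $p_{m,a}$ lie in $B_1(0)$, so $a\in\mars_m$.

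The main obstacle is interpreting "roots in common with $q^*$, counted with multiplicity" correctly in \Cref{thm:Schur-Cohn}, in the sense of the cited reference (essentially the degree of $\gcd(q,q^*)$). In the forward direction this needs care for two reasons: $q^*$ may drop degree when $a=0$, and the root $0$ of $q$ could be of concern. In the converse, the conclusion uses only the count of roots of modulus less than $1$, together with $\deg q=m+1$, so this subtlety does not arise. For $a=0$ one checks directly that $q_{m,0}=s^m(s-1)$ and $0\in\mars_m$, consistent with the claim.
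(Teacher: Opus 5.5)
Your proposal is correct and follows essentially the same route as the paper: you apply the Schur--Cohn theorem to $q_{m,a}$ (degree $m+1$) and use that $s=1$ is always a common root of $q_{m,a}$ and $q_{m,a}^*$. For $a\in\mars_m$ you show no root in the open disc is shared, and conversely rank $m$ and signature $m$ force exactly one common root plus $m$ roots in $B_1(0)$. Your extra checks make explicit two details the paper's proof leaves implicit: the simplicity of the root $1$ via $p_{m,a}(1)=1-a\neq 0$, and the degree drop of $q_{m,a}^*$ at $a=0$.
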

\begin{proof}
    We already know that $a \in \mars_m$
    if and only if $q_{m,a}$ has one root at $s=1$ and all
other roots inside the open unit disk.
Now $q_{m,a}$ and $q^*_{m,a}$
always have $s=1$ as a common root.
If $S(q_{m,a})$ has rank $m$ and signature $m$, 
then by \Cref{thm:Schur-Cohn} both polynomials only have one root in common and
all other roots have modulus smaller than one, so $a \in \mars_m$.

If on the other hand $q_{m,a}$
has a root at one and all other roots
have modulus smaller than one, then 
none of these roots $\lambda$ in the open unit disk
can be a root of $q^*_{m,a}$, for
otherwise ${\overline{\lambda}}^{-1}$
would be a root of $q_{m,a}$ of
modulus larger than one.
Therefore $q_{m,a}$ and $q^*_{m,a}$
have only one root in common,
hence $S(q_{m,a})$
has rank equal to $m=(m+1)-1$
and the rank and signature of 
$S(q_{m,a})$  agree.
\end{proof}

As we just noticed, our Schur matrix $S(q_{m,a})$ is never positive definite,
so we cannot apply the standard leading principal minor test,
also known as Sylvester's criterion, see e.g. 
\cite[Theorem 7.5 (b)]{hornMatrixAnalysis1985}. Nevertheless, we want our Schur matrix to be \enquote{as positive
definite as it can be}, which only makes tiny modifications to the
test necessary. 
The following lemma is what we need. It is probably 
well-known but the only
precise reference we could track down was \cite[Theorem 5]{epperleinMesoscaleObstructionsStability2013}
where the case of symmetric real matrices was treated. Note that in general the test for positive semidefiniteness involves all principal minors of a Hermitian matrix,
not just the leading ones, see \cite[Theorem 7.5 (a)]{hornMatrixAnalysis1985}. See also
\cite{prussingPrincipalMinorTest1986} for a discussion of related errors in the literature. For the sake of completeness 
we give a short proof 
using Cauchy's eigenvalue interlacing
theorem, see \cite[Theorem 4.3.17 and Theorem 7.5 (c)]{hornMatrixAnalysis1985}.

\begin{lem}[Characterization of almost positive definite matrices]
\label{lem:char-almost-pos-def}
  Let $C$ be an $n\times n$ Hermitian matrix with zero row sums. Then
  the following are equivalent:
  \begin{enumerate}[(i)]
  \item  \label{item:apd} $C$ is positive semidefinite and has rank $n-1$. 
    \item  \label{item:submatrix-pd} the leading $(n-1)\times (n-1)$ submatrix of $C$,
      which we denote by $C_{(n-1) \times (n-1)}$, is positive definite,
    \item the first $n-1$ leading principal minors of $C$ are
      positive. \label{item:lead-princ-min}
  \end{enumerate}
\end{lem}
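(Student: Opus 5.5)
The equivalence (ii)$\Leftrightarrow$(iii) is Sylvester's criterion applied to the Hermitian matrix $C_{(n-1)\times(n-1)}$. Its leading principal minors are exactly the first $n-1$ leading principal minors of $C$. By \cite[Theorem 7.5 (b)]{hornMatrixAnalysis1985}, positivity of these minors is equivalent to positive definiteness of $C_{(n-1)\times(n-1)}$. Nothing about the zero row sums is needed for this step.

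The key structural observation for the rest of the argument is the following. Let $\mathbf{1}\in\bbC^n$ be the all-ones vector. Zero row sums means $C\mathbf{1}=0$, so $\mathbf{1}\in\ker C$ and $\operatorname{rank} C\le n-1$ always.

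For (ii)$\Rightarrow$(i), I would use Cauchy interlacing \cite[Theorem 4.3.17]{hornMatrixAnalysis1985}. Order the eigenvalues as $\lambda_1(C)\le\dots\le\lambda_n(C)$ and $\mu_1\le\dots\le\mu_{n-1}$ for those of $C_{(n-1)\times(n-1)}$. Interlacing gives $\lambda_k(C)\le\mu_k\le\lambda_{k+1}(C)$. Hence $\lambda_2(C),\dots,\lambda_n(C)\ge\mu_1>0$. Since $0$ is an eigenvalue of $C$ (eigenvector $\mathbf{1}$), it must be $\lambda_1(C)=0$. Therefore $C$ is positive semidefinite with exactly $n-1$ positive eigenvalues, i.e.\ rank $n-1$.

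For (i)$\Rightarrow$(ii), suppose $C\succeq 0$ with rank $n-1$. Then $\ker C=\linSpan\{\mathbf{1}\}$. Clearly $C_{(n-1)\times(n-1)}$ is positive semidefinite, since $x^*C_{(n-1)\times(n-1)}x=\tilde x^*C\tilde x$ with $\tilde x=(x,0)$. If $x^*C_{(n-1)\times(n-1)}x=0$ for some $x\neq 0$, then $\tilde x^*C\tilde x=0$. For a positive semidefinite matrix this forces $C\tilde x=0$, for instance via $C=C^{1/2}C^{1/2}$. So $\tilde x\in\linSpan\{\mathbf{1}\}$, but $\tilde x$ has last entry $0$ and is nonzero, which is impossible. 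Hence $C_{(n-1)\times(n-1)}$ is positive definite.

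The only slightly delicate step is this last one: using the rank condition together with $C\mathbf{1}=0$ to pin down the kernel exactly, and showing no nonzero vector with vanishing last coordinate lies in it. Everything else is a direct citation.
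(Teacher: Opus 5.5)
Your proposal is correct and follows essentially the same route as the paper: Sylvester's criterion for (ii)$\Leftrightarrow$(iii), Cauchy interlacing plus $C\mathbf{1}=0$ for (ii)$\Rightarrow$(i), and extension by zero against $\ker C=\linSpan\{\mathbf{1}\}$ for (i)$\Rightarrow$(ii). The differences are cosmetic. You obtain semidefiniteness of the submatrix directly from the quadratic form rather than from interlacing, and you make explicit the step $\tilde x^*C\tilde x=0\Rightarrow C\tilde x=0$, which the paper leaves implicit.
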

\begin{proof}
  The equivalence between \eqref{item:submatrix-pd} and
  \eqref{item:lead-princ-min}
  is the classical characterization of positive definiteness
  via leading principal minors, \cite[Theorem 7.5 (b)]{hornMatrixAnalysis1985}.

   Let $\lambda_1 \leq \dots \leq \lambda_n$ be the eigenvalues of $C$ and $\mu_1 \leq \dots \leq \mu_{n-1}$ be the eigenvalues of $C_{(n-1) \times (n-1)}$. By Cauchy's interlacing theorem we have
  \begin{equation}
  \label{eq:Cauchy}
      \lambda_1 \leq \mu_1 \leq \lambda_2 \leq \dots \leq \mu_{n-1} \leq \lambda_n.
  \end{equation}
  Now (ii) implies $\mu_1>0$ and thus $\lambda_2>0$ and $\lambda_1 =0$ by the assumption of zero row sums. This implies (i). Conversely, if (i) holds, then \eqref{eq:Cauchy} shows that $C_{(n-1) \times (n-1)}$ is positive semidefinite as $\lambda_1=0$. If $C_{(n-1) \times (n-1)}$ were singular, then we would have
  $w \in \bbC^{n-1}$ such that $w^* C_{(n-1) \times (n-1)} w=0$.
  Extending $w$ by $0$ in the last coordinate, we
  would obtain $\tilde{w} \in \bbC^n$ such that
  $\tilde{w}^* C \tilde{w} = 0$, but $\tilde{w} \not\in
  \linSpan \{(1,\dots,1)\}=\ker C$. Hence $C_{(n-1)\times (n-1)}$ is
  regular and thus positive definite. This shows (ii).
\end{proof}

Combining \Cref{cor:MARS-via-Schur-matrix} with
  \Cref{lem:char-almost-pos-def} we obtain the following explicit formulas for $\mars_m^+$.
\begin{thm}[An explicit formula describing the upper half of MARS]
  \label{thm:explicit-formulas-for-mars}
  \begin{align*}
    \mars_m^+ &= \left\{a \in \bbC \setsep \imag\left(\left(1+\frac{a}{m}\right)^{\ell+1}\overline{a}\right)<0
          \text{ for all } \ell \in \{0,\dots,m\},\; \imag(a)>0\right\} \\
    &=\left\{ \xrep{} + \iu \yimp{} \setsep (\xrep,\yimp) \in \bbR \times (0,\infty),\;
  (m+1)\arccot\left(\frac{\xrep+m}{\yimp}\right) <\arccot\left(\frac{\xrep}{\yimp}\right)\right\},
  \end{align*}
  where we consider $\arccot$ as a continuous decreasing function
from $\bbR$ to $(0,\pi)$.
  See \Cref{fig:stability-regions} for an illustration.
\end{thm}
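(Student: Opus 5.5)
The plan is to combine \Cref{cor:MARS-via-Schur-matrix} with \Cref{lem:char-almost-pos-def}, applied to $C=S(q_{m,a})$ with $n=m+1$.

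\textbf{Zero row sums.} Write $b:=1+\frac{a}{m}$ and $c:=\frac{a}{m}$, so that $q_{m,a}(s)=s^{m+1}-bs^m+c$. Only three coefficients are nonzero: $a_{m+1}=1$, $a_m=-b$, $a_0=c$. Hence:
\begin{itemize}
\item $U(q_{m,a})$ is upper bidiagonal, with $1$ on the diagonal and $-\overline b$ on the superdiagonal;
\item $L(q_{m,a})$ has $c$ on the antidiagonal, plus the single extra entry $-b$ in position $(m+1,m+1)$.
\end{itemize}
I will show that $S(q_{m,a})\mathbf 1=0$. The structural reason is that $1$ is a common root of $q_{m,a}$ and $q_{m,a}^*$. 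Concretely, $U\mathbf 1=(1-\overline b,\dots,1-\overline b,1)^\top$ and $L\mathbf 1=(c,\dots,c,c-b)^\top$, so $U\mathbf 1$ and $L\mathbf 1$ coincide up to the conjugation pattern, since $1-\overline b=-\overline c$ and $c-b=-1$. From this one checks $U^*U\mathbf 1=L^*L\mathbf 1$ by direct multiplication. \Cref{lem:char-almost-pos-def} then applies: by \Cref{cor:MARS-via-Schur-matrix}, $a\in\mars_m$ if and only if the leading minors $D_1,\dots,D_m$ of $S(q_{m,a})$ are positive.

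\textbf{Computing the leading minors.} This is the step I expect to be the main obstacle. The leading $k\times k$ block of $U^*U$ is tridiagonal: diagonal $1+|b|^2$ (except the first entry, which is $1$), off-diagonals $-b$ and $-\overline b$. The leading block of $L^*L$, for $k\le m$, is $|c|^2$ times a projection onto the last $k$ coordinates intersected with the first rows. Only a few entries interact, so $S_k$ is a tridiagonal matrix perturbed by a rank-one term. I would compute $D_k$ in one of two ways:
\begin{itemize}
\item by the standard three-term recurrence for tridiagonal determinants, corrected via the matrix determinant lemma;
\item or, more cleanly, by one step of the Schur--Cohn/Jury reduction, which passes from $q_{m,a}$ to the polynomial $\overline{a_{n}}q-a_0q^*$ divided by $s$.
\end{itemize}
The target identity is
\[
D_k=\kappa_k\cdot\bigl(-\imag\bigl(b^{k}\overline a\bigr)\bigr)
\]
with explicit $\kappa_k>0$ when $\imag a>0$. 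For example, $D_1=1-|c|^2$ reduces to a positive multiple of $-\imag(b\overline a)$ after a short computation, and I would guess and verify the general pattern by induction. This would give the first description of $\mars_m^+$. Note that the paper's index $\ell\in\{0,\dots,m\}$ corresponds to $k=\ell+1$, so I must check which $m$ minors appear, including whether the $\ell=m$ condition arises from $D_m$. The restriction $\imag a>0$ is used so that $\imag a$ can be divided out with a definite sign.

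\textbf{Angular reformulation.} For $a=\xrep+\iu\yimp$ with $\yimp>0$, set $\theta_0:=\arg a=\arccot(\xrep/\yimp)\in(0,\pi)$ and $\theta_1:=\arg b=\arccot\bigl((\xrep+m)/\yimp\bigr)$. Since $\arccot$ is decreasing, $0<\theta_1<\theta_0<\pi$. Then
\[
\imag\bigl(b^{\ell+1}\overline a\bigr)<0 \iff \sin\bigl((\ell+1)\theta_1-\theta_0\bigr)<0,
\]
that is, $(\ell+1)\theta_1-\theta_0$ lies in $(-\pi,0)$ modulo $2\pi$.

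The sequence $t_\ell:=(\ell+1)\theta_1-\theta_0$ starts at $t_0\in(-\pi,0)$ and increases in steps $\theta_1<\pi$. Therefore, as long as all $t_\ell$ stay negative they remain in $(-\pi,0)$. The first index with $t_\ell\ge0$ has $t_\ell\in[0,\pi)$, where the sine is nonnegative and the condition fails. So the conditions for all $\ell\le m$ hold if and only if $t_m<0$, i.e.
\[
(m+1)\arccot\!\left(\frac{\xrep+m}{\yimp}\right)<\arccot\!\left(\frac{\xrep}{\yimp}\right),
\]
which is the second description of $\mars_m^+$.
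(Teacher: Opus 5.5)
Your framework is the paper's: \Cref{cor:MARS-via-Schur-matrix} plus \Cref{lem:char-almost-pos-def}, the zero-row-sum check, then the angle argument. Your zero-row-sum verification is correct, and so is your argument with $t_\ell$ increasing in steps $\theta_1<\pi$ from $t_0\in(-\pi,0)$.

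The gap is in the step you flag as the main obstacle. You never compute the minors, and the closed form you plan to verify by induction is off by one index, so the induction cannot succeed. Indeed $b\overline a=\overline a+|a|^2/m$, so $-\imag(b\overline a)=\imag a>0$ on the whole upper half-plane. But $D_1=1-|c|^2=1-|a|^2/m^2$ is negative for $|a|>m$, so $D_1$ is not a positive multiple of $-\imag(b\overline a)$. The correct identity is
\[
D_k=\frac{-\imag\bigl(b^{k+1}\overline a\bigr)}{\imag a},\qquad k=0,\dots,m .
\]
Here the empty minor $D_0=1$ accounts for the trivially true condition $\ell=0$, and $D_1,\dots,D_m$ give exactly the conditions $\ell=1,\dots,m$. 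With your indexing you would retain only $\ell=0,\dots,m-1$. Your angular argument would then output $m\arccot\bigl(\frac{\xrep+m}{\yimp}\bigr)<\arccot\bigl(\frac{\xrep}{\yimp}\bigr)$. For $m=1$ that is the whole upper half-plane rather than the upper half of the unit disk.

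The missing observation makes the computation routine. For $k\le m$, the leading $k\times k$ block of $L^*L$ is simply $|c|^2I_k$; no projection or rank-one correction appears. Apart from the corner entries $(1,m+1)$ and $(m+1,1)$, the matrix $S(q_{m,a})$ is tridiagonal, and those corners lie outside the leading $m\times m$ block.

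Using $c=b-1$, that block is tridiagonal with diagonal $2\real b-|b|^2,2\real b,\dots,2\real b$ and off-diagonal entries $-\overline b$, $-b$. Hence
\[
D_{k+2}=2\real b\,D_{k+1}-|b|^2D_k,\qquad D_0=1,\quad D_1=2\real b-|b|^2 .
\]
Since $\imag a>0$, the characteristic roots $b,\overline b$ are distinct, and solving the recurrence gives the formula above. This is exactly how the paper proceeds; no determinant lemma or Jury reduction is needed.
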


\begin{proof}
We first calculate the Schur matrix of $q_{m,a}$.
To simplify notation, we write $b_m=1+\frac{a}{m}$,
so we have $q_{m,a}(s) = s^{m+1}-b_m s^m + (b_m-1)$.
We start with the two auxiliary matrices
\begin{align*}
  U(q_{m,a}) &= \begin{bmatrix}
      1 & -\overline{b}_m & 0&\dots &0\\
      0 & 1 &-\overline{b}_m& \ddots & \vdots \\
      \vdots &\ddots&\ddots &\ddots&0\\
      \vdots&&\ddots&1&-\overline{b}_m\\
      0&\dots&\dots&0&1\end{bmatrix}\\
  L(q_{m,a}) &=  \begin{bmatrix}
      0 & \dots &\dots&0&b_m-1 \\
      0 & \dots&&b_m-1&0 \\
      \vdots&&\iddots&\iddots&\vdots\\
      0&b_m-1&0&\dots&0\\
      b_m-1 &0&\dots&0& -b_m
    \end{bmatrix} .
\end{align*}
From these matrices a straightforward calculation yields the Schur matrix as
\begin{align*}
&S(q_{m,a}) = U(q_{m,a})^*U(q_{m,a}) -
                 L(q_{m,a})^*L(q_{m,a}) \\
         =&\begin{bmatrix}
      2 \real b_m -|b_m|^2 & -\overline{b}_m & 0&\dots &(\overline{b}_m-1)b_m\\
      -b_m & 2 \real b_m &-\overline{b}_m& \ddots & \vdots \\
      0 &\ddots&\ddots &\ddots&0\\
      \vdots& \ddots & -b_m&2 \real b_m &-\overline{b}_m\\
      (b_m-1)\overline{b}_m&\dots& 0 &-b_m&2\real b_m-|b_m|^2\end{bmatrix}.
\end{align*}
The matrix $S(q_{m,a})$ has $(1,\dots,1)^\top$ in its kernel,
so in particular it is singular.

In light of \Cref{lem:char-almost-pos-def}, we are interested in those parameters $a$ for which
the first $m$ leading principal minors of $S(q_{m,a})$ are positive.
Let $c_{\ell,m,a}$ be the determinant
of the leading $\ell \times \ell$ submatrix
of $S(q_{m,a})$.
Since the leading $m\times m$ submatrix
is tridiagonal, we obtain the following recursion
\begin{align}
\nonumber
  c_{0,m,a} &= 1,\\
\label{eq:det-recursion}
  c_{1,m,a} &= 2 \real b_m -|b_m|^2, \\
  c_{\ell+2,m,a} &= 2 \real b_m c_{\ell+1,m,a}- |b_m|^2 c_{\ell,m,a}, \quad \ell = 0, \ldots, m-2.
  \nonumber
\end{align}

We assume for the remainder of the proof that $\imag(a)>0$. Then $b_m$
is not real and we get two different roots $s_1=b_m$ and
$s_2=\overline{b}_m$ for the characteristic equation
$s^2=2 \real b_m s- |b_m|^2$ of the recursion
\eqref{eq:det-recursion}.  We therefore obtain
\begin{align*}
  c_{\ell,m,a} &= b_m^{\ell+1} \frac{\overline{b}_m-1}{\overline{b}_m-b_m} + (\overline{b}_m)^{\ell+1}
  \frac{b_m-1}{b_m-\overline{b}_m} \\
               &= \real \left(b_m^{\ell+1}\frac{\iu \overline{a}}{\imag a}\right)=\frac{-\imag(b_m^{\ell+1} \overline{a})}{\imag a}
\end{align*}
for all $\ell \in \{0,\dots,m\}$.

All in all this shows that
\begin{align*}
  \mars_m^+ = \{a \in \bbC \setsep  \imag(b_m^{\ell+1} \overline{a})<0 \text{ for all }\ell \in \{0,\dots,m\}, \imag(a)>0\}.
\end{align*}

It remains to show the second equality in the assertion.
Set $\varphi_{a,m} = \arg{b_m} \in (0,\pi)$.
Note that $\imag(b_m \overline{a}) <0$.
As $\arg \overline{a} \in (-\pi,0)$,
we have $\varphi_{a,m} +\arg \overline{a} =\arg(b_m \overline{a})\in (-\pi,0)$.

Since $\varphi_{a,m} \in (0,\pi)$ the condition
\begin{align}
  \forall \ell \in \{0,\dots,m\}: \imag\left(b_m^{\ell+1} \overline{a}\right)<0
\end{align}
becomes
\begin{align}
  \forall \ell \in \{0,\dots,m\}: (\ell+1) \varphi_{a,m} +\arg \overline{a} \in
  (-\pi,0). \label{eq:arg-condition}
\end{align}
Because   $\varphi_{a,m} +\arg \overline{a} \in (-\pi,0)$
and $\varphi_{a,m}>0$
this is equivalent to 
\begin{align}
\label{eq:phi-final-ineq}
  (m+1)\varphi_{a,m} + \arg \overline{a} < 0.
\end{align}

Set $a=\xrep+\iu\yimp{}$. We have already assumed that $\yimp>0$.
Recall that we consider $\arccot$ as a continuous decreasing function
from $\bbR$ to $(0,\pi)$,
see \Cref{fig:acot}.
\begin{figure}
\begin{center}
\begin{tikzpicture}
\begin{axis}[
    height=4cm,
    width=8cm,
    axis lines=middle,
    xmin=-7, xmax=7.2,
    ymin=-.2, ymax=3.5,
    samples=400,
    xlabel={$\xvar$},
    ylabel={$\arccot(\xvar)$},
    ytick={pi/2,pi},
    yticklabels={$\frac{\pi}{2}$,$\pi$},
    x label style={at={(current axis.right of origin)},anchor=north, below=1mm},
       very thick,
]
\addplot[thick,NavyBlue,domain=-7:7.2
]
{rad(atan2(1,x))};
\end{axis}
\end{tikzpicture}
\caption{The branch of $\arccot$ used throughout the paper}
\label{fig:acot}
\end{center}
\end{figure}
With this convention we get
\begin{align*}
  \varphi_{a,m} = \arccot\left(\frac{\xrep+m}{\yimp}\right)
\end{align*}
and hence condition \eqref{eq:phi-final-ineq} becomes
\begin{align*}
  (m+1)\arccot\left(\frac{\xrep+m}{\yimp}\right) &< - \arg \overline{a} = \arg a=
  \arccot\left(\frac{\xrep}{\yimp}\right). 
\end{align*}
The second equality in the assertion is proved.
\end{proof}

\section{The Structure of MARS}
\label{sec:MARSstructure}

In this section we will exploit the formula obtained
in \Cref{thm:explicit-formulas-for-mars} to obtain a better understanding of
the structure of the sets $\mars_m$.
Our main tool will be the function $F: \bbR \times (0,\infty)
\times [0,\infty) \to \bbR$ given by
\begin{align*}
  F(\xrep,\yimp,m)=(m+1)\arccot\left(\frac{\xrep+m}{\yimp}\right).
\end{align*}
Using this notation \Cref{thm:explicit-formulas-for-mars} reads
\begin{equation}
\label{eq:thm74recap}
  \mars_m^+ = \{\xrep+\iu\yimp{} \setsep (\xrep,\yimp) \in \bbR \times (0,\infty),\; F(\xrep,\yimp,m) <F(\xrep,\yimp,0)\}
\end{equation}
and we will show  in \Cref{cor:decomp-mars} that
\begin{align*}
\mars_m &= \mars_m^+ \cup \mars_m^- \cup (-m,1)\\
&=\{\xrep{}+\iu\yimp{} \setsep \xrep{},\yimp{} \in \bbR, F(\xrep{},\abs{\yimp{}},m) <F(\xrep,\abs{\yimp},0),  \yimp{} \neq
0\} \cup (-m,1).
\end{align*}
We have moved the discussion of various estimates which do not directly involve the function $F$ to
\Cref{sec:various-estimates}.
We start with a simple observation.
\begin{lem}[A quarter plane containing the upper part of MARS]
\label{lem:quarter-plane}
  Let $m \in \bbN$. If $\xrep+\iu\yimp{} \in \mars_m^+$, then
  $1> \xrep{} > -m+ \yimp{} \cot\left(\frac{\pi}{m+1}\right)\geq -m$.
\end{lem}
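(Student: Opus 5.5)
The plan is to treat the three inequalities separately, using \Cref{thm:explicit-formulas-for-mars} and \Cref{prop:simple-properties}.

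\emph{The bound $\xrep<1$.} By \Cref{prop:simple-properties}\,\itemref{item:mars-is-left-of-one} we have $\mars_m \subseteq \mars \subseteq \Hone$, so $\real(\xrep+\iu\yimp)<1$. Nothing more is needed here.

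\emph{The middle inequality.} Let $\xrep+\iu\yimp\in\mars_m^+$, so $\yimp>0$. The second description in \Cref{thm:explicit-formulas-for-mars} gives
\[
(m+1)\arccot\left(\frac{\xrep+m}{\yimp}\right) < \arccot\left(\frac{\xrep}{\yimp}\right).
\]
The chosen branch of $\arccot$ takes values in $(0,\pi)$, so the right-hand side is less than $\pi$. Dividing by $m+1$ yields
\[
\arccot\left(\frac{\xrep+m}{\yimp}\right)<\frac{\pi}{m+1}.
\]
Since $\arccot$ is a continuous strictly decreasing bijection from $\bbR$ onto $(0,\pi)$, its inverse on $(0,\pi)$ is $\cot$, which is also strictly decreasing. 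Applying $\cot$ to both sides reverses the inequality, because $\pi/(m+1)\in(0,\pi)$ for $m\ge 1$. This gives
\[
\frac{\xrep+m}{\yimp}>\cot\left(\frac{\pi}{m+1}\right).
\]
Multiplying by $\yimp>0$ then gives $\xrep>-m+\yimp\cot(\pi/(m+1))$.

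\emph{The last inequality.} For $m\ge1$ we have $\pi/(m+1)\in(0,\pi/2]$, so $\cot(\pi/(m+1))\ge 0$. Together with $\yimp>0$ this gives $\yimp\cot(\pi/(m+1))\ge 0$, which is the final bound $\ge -m$. Equality holds exactly when $m=1$.

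No step is a real obstacle. The only care needed is with the monotonicity direction when inverting $\arccot$, and with noting that the strict bound $\arccot(\xrep/\yimp)<\pi$ is what produces a strict inequality.
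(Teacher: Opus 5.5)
Your proposal is correct and follows essentially the same route as the paper: bound the right-hand side $\arccot(\xrep/\yimp)$ of the explicit description in \Cref{thm:explicit-formulas-for-mars} by $\pi$, invert the decreasing $\arccot$ to get $(\xrep+m)/\yimp>\cot(\pi/(m+1))\geq 0$, and read off $\xrep<1$ from \Cref{prop:simple-properties}\,\itemref{item:mars-is-left-of-one}. Your extra remarks on the direction of the inequality when applying $\cot$, and on equality in the last bound exactly when $m=1$, are accurate.
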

\begin{proof}
  For $\xrep+\iu\yimp{} \in \mars_m^+$ we have $F(\xrep,\yimp,m)<F(\xrep,\yimp,0)<\pi$ and therefore
  $(m+1)\arccot(\frac{\xrep+m}{\yimp})<\pi$. This in turn implies the first claim through
  \begin{align*}
    \frac{\xrep+m}{\yimp}&>\cot\left(\frac{\pi}{m+1}\right) \geq \cot\left(\frac{\pi}{2}\right)=0. 
  \end{align*}
  The observation $\xrep<1$ is another way of stating \Cref{prop:simple-properties}
  \eqref{item:mars-is-left-of-one}.
\end{proof}

Combining the previous results we
obtain the following decomposition
for the whole moving-average region of stability.
\begin{cor}[Decomposition of MARS] For all $m \in \bbN$ 
  \label{cor:decomp-mars}
  \begin{align*}
    \mars_m &= \mars^+_m \cup \mars_m^- \cup (-m,1) = \mars^+_m \cup \left(\mars_m^+\right)^* \cup (-m,1).
  \end{align*}  
\end{cor}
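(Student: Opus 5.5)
The plan is to split $\mars_m$ according to the sign of the imaginary part. Every $a\in\mars_m$ has either $\imag a>0$, $\imag a<0$, or $\imag a=0$. The first two pieces are, by definition, $\mars_m^+$ and $\mars_m^-$. The second equality of the corollary then follows immediately from \Cref{prop:simple-properties}\itemref{item:decomp-mars}, which gives $\mars_m^-=(\mars_m^+)^*$. So the only real content is the identity
\[
\mars_m\cap\bbR=(-m,1).
\]

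For the inclusion $(-m,1)\subseteq\mars_m$ I would combine three facts.
\begin{itemize}
\item \Cref{prop:simple-properties}\itemref{item:mars-reals-right} gives $[0,1)\subseteq\mars_m$.
\item \Cref{prop:simple-properties}\itemref{item:mars-real-left} gives $(-m,0)\subseteq\mars_m$.
\item $0\in\mars_m$ directly: $p_{m,0}(s)=s^m$ has all its roots at $0$.
\end{itemize}
For the reverse inclusion, part \itemref{item:mars-reals-right} already excludes every real $a\ge 1$. What remains is to show that no real $a\le -m$ lies in $\mars_m$.

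For real $a\le -m$ I would exhibit a root of $q_{m,a}$ other than $1$ with modulus at least $1$. The cleanest candidate is $a=-m$ itself. There $1+\frac am=0$, so
\[
q_{m,-m}(s)=s^{m+1}-1,
\]
whose roots are all on the unit circle. Hence $-m\notin\mars_m$.

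For $a<-m$ I would use a continuity argument rather than explicit roots. The roots of $q_{m,a}$ depend continuously on $a$. Moreover, for real $a<-m$ a root on the unit circle would have to satisfy
\[
|s|^m\,\bigl|s-\bigl(1+\tfrac am\bigr)\bigr|=\bigl|\tfrac am\bigr|,
\qquad\text{i.e.}\qquad
\bigl|s-\bigl(1+\tfrac am\bigr)\bigr|=\bigl|\tfrac am\bigr|=\bigl|1+\tfrac am\bigr|+1 .
\]
Since $1+\frac am<0$, this forces $s=1$. So on $(-\infty,-m)$ no root other than $1$ crosses the unit circle, and the number of roots outside the closed disc is constant there.

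To get at least one root outside, I would take $a\to-\infty$. Dividing by $a/m$ gives
\[
\frac{m}{a}\,q_{m,a}(s)=\frac ma\bigl(s^{m+1}-s^m\bigr)-s^m+1\;\longrightarrow\;1-s^m .
\]
As this rescaled polynomial has degree $m+1$ while the limit has degree $m$, one root escapes to infinity. So for very negative $a$ there is a root of large modulus, and by the constancy just established, every $a<-m$ has a root outside the unit disc. Therefore $a\notin\mars_m$.

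An alternative for this step is the Gauß–Lucas argument from part \itemref{item:mars-is-left-of-one}. The critical point $\frac{m+a}{m+1}$ has modulus at least $1$ once $a\le -2m-1$, which handles very negative $a$ directly, but it does not cover the gap in between, so the continuity argument is still needed.

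I expect the main obstacle to be this exclusion of $a<-m$. The crossing analysis on the unit circle is the delicate point: one has to make sure that roots only touch the circle at $s=1$, and must not miscount when the root $1$ could become multiple. That happens at $q_{m,a}'(1)=0$, i.e.\ $a=1$, which lies outside the range considered here, so the count is safe.
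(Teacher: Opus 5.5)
Your proof is correct, but it handles the one nontrivial step, $\mars_m\cap\bbR\subseteq(-m,1)$, differently from the paper.

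The paper never examines real $a$ directly. Since $\mars_m$ is open (\Cref{prop:simple-properties}\itemref{item:mars-is-open}, via Elsner's theorem), a real point $a\in\mars_m$ has a disc around it inside $\mars_m$. That disc contains points of $\mars_m^+$ with real parts slightly below and slightly above $a$. \Cref{lem:quarter-plane}, whose lower bound comes from the Schur--Cohn formula of \Cref{thm:explicit-formulas-for-mars}, puts those real parts in $(-m,1)$, so $a\in(-m,1)$. This takes two lines, but only because the Schur--Cohn analysis and the continuity of the spectrum are already in place.

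Your argument stays on the real axis and uses only \Cref{prop:simple-properties}\itemref{item:mars-reals-right} and \itemref{item:mars-real-left} together with continuity of roots. Each step checks out:
\begin{itemize}
\item the explicit roots of $s^{m+1}-1$ at $a=-m$;
\item the equality case of the triangle inequality, showing that for real $a<-m$ the only unimodular root of $q_{m,a}$ is $s=1$;
\item the simplicity check $q_{m,a}'(1)=1-a\neq 0$;
\item the root escaping to infinity as $a\to-\infty$.
\end{itemize}
So your proof is independent of \Cref{sec:Schur-Cohn}, which is what your route buys.

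It can be shortened considerably, though. $p_{m,a}$ is monic of degree $m$ with constant term $-a/m$, so the product of its roots has modulus $\abs{a}/m$. If all roots lay in $B_1(0)$, this would force $\abs{a}<m$. Hence $\mars_m\subseteq B_m(0)$, which excludes every real $a\le -m$ at once. This makes both the computation at $a=-m$ and the crossing argument unnecessary.
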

\begin{proof}
  By \Cref{lem:quarter-plane}
  and \Cref{prop:simple-properties}
  \itemref{item:decomp-mars}
  it only remains to show that
  $\mars_m \cap \bbR = (-m,1)$.
  We already know that $(-m,1) \subseteq \mars_m$
  from \Cref{prop:simple-properties} \itemref{item:mars-reals-right}
  and \itemref{item:mars-real-left}.

  By \Cref{prop:simple-properties}
  \itemref{item:mars-is-open}, $\mars_m$ is open. Thus 
  the inclusion $\mars_m \cap \bbR \subseteq (-m,1)$ follows
  directly from \Cref{lem:quarter-plane}.
\end{proof}

We will now make the semialgebraic structure of $\mars_m$ explicit, which was already obtained in \Cref{prop:simple-properties}
  \itemref{item:mars-is-semialgebraic}.
Using the addition theorem for $\cot$, we can describe
the sets $\mars_m$ as sublevel sets of rational functions, 
which we calculate explicitly for small $m$.

\begin{thm}[Describing MARS by rational functions]
  \label{thm:rational-func-description}
  For every $m \geq 1$ there is a rational function $h_m$ in two variables
  such that \begin{align*}
    \mars_m^+= \left\{\xrep{} + \iu \yimp{} \in \bbC \setsep \yimp{} >0, h_m(\xrep{} ,\yimp{} )<0 \text{ and } \xrep{}  > -m+ \yimp{}  \cot\left(\frac{\pi}{m+1}\right)\right\}.
  \end{align*}
            Furthermore
            \begin{align*}
    \mars_1^+&= \{\xrep{} + \iu \yimp{} \setsep \xrep{} ^2+\yimp{} ^2<1,\yimp{} >0\}, \\
    \mars_2^+&= \{\xrep{} + \iu \yimp{} \setsep (\xrep{} ^2+\yimp{} ^2)(\xrep{} +3)<4, \xrep{} >-2,\yimp{} >0\}, \\
    \mars_3^+&= \{\xrep{} + \iu \yimp{} \setsep
           3(\xrep{} ^2+\yimp{} ^2)(\xrep{} +3)(\xrep{} +5)+(9-\yimp{} ^2)(\xrep{} ^2+\yimp{} ^2)<81,\\ & \hspace*{8cm} \xrep{} >-3+\yimp{} ,\yimp{} >0\}.
            \end{align*}
          \end{thm}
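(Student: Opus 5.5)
We start from the second description in \Cref{thm:explicit-formulas-for-mars}, i.e. from \eqref{eq:thm74recap}: for $\yimp>0$ we have $\xrep+\iu\yimp\in\mars_m^+$ if and only if $(m+1)\varphi<\theta$. Here $\varphi:=\arccot\bigl(\frac{\xrep+m}{\yimp}\bigr)\in(0,\pi)$ and $\theta:=\arccot\bigl(\frac{\xrep}{\yimp}\bigr)\in(0,\pi)$.

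The idea is to convert this inequality of angles into an inequality of cotangents, which requires both sides to lie in a common interval on which $\cot$ is monotone. \Cref{lem:quarter-plane} shows that on $\mars_m^+$ we have $\xrep>-m+\yimp\cot\bigl(\frac{\pi}{m+1}\bigr)$. Conversely, this inequality is equivalent to $\frac{\xrep+m}{\yimp}>\cot\frac{\pi}{m+1}$, i.e. to $(m+1)\varphi<\pi$.

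So on the half-plane region $H_m:=\{\yimp>0,\ \xrep>-m+\yimp\cot(\frac{\pi}{m+1})\}$ both $(m+1)\varphi$ and $\theta$ lie in $(0,\pi)$. Since $\cot$ is strictly decreasing there, on $H_m$ we get
\[(m+1)\varphi<\theta \iff \cot((m+1)\varphi)>\cot\theta=\frac{\xrep}{\yimp}.\]
By \Cref{lem:quarter-plane}, $\mars_m^+\subseteq H_m$, so $\mars_m^+=H_m\cap\{\cot((m+1)\varphi)-\xrep/\yimp>0\}$.

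Next we express $\cot((m+1)\varphi)$ rationally. Writing $b=\xrep+m+\iu\yimp=\abs{b}e^{\iu\varphi}$ gives
\[\cot((m+1)\varphi)=\frac{\real(b^{m+1})}{\imag(b^{m+1})}.\]
Equivalently, one can iterate the addition theorem $\cot(k\varphi+\varphi)=\frac{\cot(k\varphi)\cot\varphi-1}{\cot(k\varphi)+\cot\varphi}$ with $\cot\varphi=(\xrep+m)/\yimp$. Either way this is a rational function of $(\xrep,\yimp)$, and we set
\[h_m(\xrep,\yimp):=\frac{\xrep}{\yimp}-\frac{\real((\xrep+m+\iu\yimp)^{m+1})}{\imag((\xrep+m+\iu\yimp)^{m+1})}.\]
On $H_m$ we have $(m+1)\varphi\in(0,\pi)$, so $\imag(b^{m+1})>0$ and $h_m$ is defined there. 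This proves the general statement.

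For the explicit formulas we clear the positive denominator $\yimp\,\imag(b^{m+1})$. The condition becomes
\[\xrep\,\imag(b^{m+1})-\yimp\,\real(b^{m+1})<0,\]
that is, $\imag(\overline{a}\,b^{m+1})<0$ with $a=\xrep+\iu\yimp$. This matches the first description in \Cref{thm:explicit-formulas-for-mars} for $\ell=m$, with $b=m\,b_m$.

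We then expand for $m=1,2,3$ and divide by the common factor $\yimp$. The expansions should factor through $\xrep^2+\yimp^2=\abs{a}^2$, because $\imag(\overline{a}b^{m+1})$ vanishes at $\yimp=0$.

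For $m=1$: $\imag(\overline{a}(a+1)^2)=\yimp(\abs{a}^2-1)$, which gives the unit disk. Here $\cot(\pi/2)=0$, so the extra condition is $\xrep>-1$, and it is implied by the disk.

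For $m=2$: $\cot(\pi/3)=1/\sqrt3$. The stated set uses only $\xrep>-2$, so we must show that on $\{(\xrep^2+\yimp^2)(\xrep+3)<4,\ \xrep>-2\}$ the stronger condition $\xrep>-2+\yimp/\sqrt3$ holds automatically. One way is to check that the boundary curve stays inside the sector. Another is to note that where $(m+1)\varphi\geq\pi$ the polynomial inequality fails for sign reasons.

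For $m=3$: $\cot(\pi/4)=1$, which matches the stated condition $\xrep>-3+\yimp$. We expand $\imag(\overline{a}(a+3)^4)/\yimp$ and compare it with the claimed polynomial.

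The main obstacle is this last bookkeeping. For $m=2$ we must justify dropping the sector condition, and in general we must keep track of where $\imag(b^{m+1})$ changes sign. The remaining expansions are routine algebra, which we would verify by direct computation.
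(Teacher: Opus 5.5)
Your argument for the general statement is the paper's own. You use the same reduction to the sector $(m+1)\varphi<\pi$ via \Cref{lem:quarter-plane}, the same monotonicity of $\cot$ on $(0,\pi)$, and literally the same $h_m$, since the paper's $Q_{m+1}\bigl(\frac{\xrep+m}{\yimp}\bigr)$ equals $\real(b^{m+1})/\imag(b^{m+1})$. Clearing the positive denominator to reach $\imag(\overline{a}\,b^{m+1})<0$ is a convenient simplification for the explicit cases.

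Your $m=1$ computation is correct. For $m=3$ the expansion does give $\imag\bigl(\overline{a}(a+3)^4\bigr)=\yimp\bigl(\abs{a}^2(3\xrep^2+24\xrep+54-\yimp^2)-81\bigr)$. This is $\yimp$ times the stated left-hand side minus $81$, and $\xrep>-3+\yimp$ is exactly your sector condition. One small correction: everything is expressible through $\abs{a}^2$ because $\overline{a}a^k=\abs{a}^2a^{k-1}$. Vanishing at $\yimp=0$ only gives you the factor $\yimp$.

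The one step you leave open is $m=2$. There you must show that $(\xrep^2+\yimp^2)(\xrep+3)<4$, $\xrep>-2$ and $\yimp>0$ together force $3\varphi<\pi$, but you only list two possible strategies. Your sign strategy works and should be written out:
\begin{itemize}
\item Put $\theta=\arg a$ and $\varphi=\arg(a+2)$. The condition $\xrep>-2$ gives $\varphi<\pi/2$.
\item Since $\cot\varphi=\frac{\xrep+2}{\yimp}>\frac{\xrep}{\yimp}=\cot\theta$, you get $\varphi<\theta<\pi$.
\item If $3\varphi\geq\pi$, then $0<3\varphi-\theta<2\varphi<\pi$.
\item Hence $2\yimp\bigl((\xrep^2+\yimp^2)(\xrep+3)-4\bigr)=\imag\bigl(\overline{a}(a+2)^3\bigr)=\abs{a}\abs{a+2}^3\sin(3\varphi-\theta)>0$, contradicting the hypothesis.
\end{itemize}

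This argument is specific to $m=2$. In general the window for $(m+1)\varphi-\theta$ is $(0,m\varphi)$, which can leave $(0,\pi)$ once $m\geq 3$. Indeed, for $m=3$ the sector condition cannot be dropped: $a=10\iu$ satisfies the polynomial inequality and $\xrep>-3$, but is not in the set.

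The paper closes the $m=2$ step differently. It first notes $\xrep<1$, then uses the estimate $\frac{4}{\xrep+3}<\xrep^2+3(\xrep+2)^2$ on $(-2,1)$ from \Cref{lem:R2-bound-1} to get $\yimp^2<\frac{4}{\xrep+3}-\xrep^2<3(\xrep+2)^2$. Your angular argument avoids that auxiliary lemma and makes clear why $m=2$ is special.
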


\begin{rem}
\label{rem:euler}
    The crucial tool behind \Cref{thm:rational-func-description} is the fact
  that $\tan(n \beta)$ can be expressed
  as a rational function $R_n$ of $\tan(\beta)$.
  This has been known for a long time, e.g. it appears explicitly
  in Euler's work \cite[Chapter XIV, Item 249]{euler1988}. 
  See \cite{calcut2008tangentchebyshev} and the references therein for an overview
  of the relevant literature.
\end{rem}

\begin{proof}[Proof of \Cref{thm:rational-func-description}]
  Let $z \in \bbR$ and set $\beta = \arccot(z) \in (0,\pi)$. 
  We can view $R_n(z):=\tan(n\arctan(z))$
  as a variation of the Chebyshev polynomials
  which are defined by $\cos(n \arccos(z))$.
  In view of \Cref{rem:euler}, since 
  $\cot$ is the reciprocal of $\tan$,
  we can also express $\cot(n\beta)$ as
  a rational function $Q_n \in \bbR(r)$ of $\cot(\beta)$
  where $Q_n(\xvar) = \frac{1}{R_n(\frac{1}{\xvar})}$.
  Since we need the first few of these functions explicitly
  and the derivation is elementary and short,
  we reproduce a recursive formula for $Q_n$ below.
  
  For $n \in \bbN$ we have 
  \begin{align*}
    \cot(n \beta)
          &= \frac{\cos(n\beta )}{\sin(n\beta)} \\
          &= \iu\frac{(\cos(\beta) + \iu \sin(\beta))^n+(\cos(\beta) -
            \iu \sin(\beta))^n}{(\cos(\beta) + \iu
            \sin(\beta))^n-(\cos(\beta) - \iu \sin(\beta))^n} \\
          &=i \frac{(z+\iu)^n+(z-\iu)^n}{(z+i)^n-(z-\iu)^n}\\
          &=\frac{\real(z+\iu)^n}{\imag(z+\iu)^n}
  \end{align*}
  Now define a sequence of polynomials $p_n,q_n$ recursively by \begin{align*}
    p_1(s) &= s, \\
    q_1(s) &= 1, \\
    p_{n+1}(s) &= sp_n(s) - q_n(s), \\
    q_{n+1}(s) &= sq_n(s) + p_n(s).
  \end{align*}
  The previous calculation shows that
  \begin{align*}
    \cot(n \arccot(z)) = Q_n(z)=\frac{p_n(z)}{q_n(z)}.
  \end{align*}
  We therefore get
  \begin{align*}
    \cot(2\arccot(z)) &= \frac{z^2-1}{2z},\\
    \cot(3\arccot(z)) &=\frac{z^3-3z}{3z^2-1},\\
    \cot(4\arccot(z)) &=\frac{z^4-6z^2+1}{4z^3-4z},\\
    &\vdots
  \end{align*}
  We now set $h_m(\xrep,\yimp):=\frac{\xrep}{\yimp}-\cot((m+1)\arccot(\frac{\xrep+m}{\yimp}))=\frac{\xrep}{\yimp}-Q_{m+1}(\frac{\xrep+m}{\yimp})$
  which is a rational function in $\xrep$ and $\yimp$.
  Using the previous calculations we obtain
  \begin{align*}
    h_1(\xrep,\yimp) &= \frac{\xrep^{2} + \yimp^{2} - 1}{2 \, {\left(\xrep{} + 1\right)} \yimp}, \\
    h_2(\xrep,\yimp) &= \frac{2 \, {\left(\xrep^{3} + \xrep{} \yimp^{2} + 3 \, \xrep^{2} + 3 \,
               \yimp^{2} - 4\right)}}{{\left(3 \, \xrep^{2} - \yimp^{2} + 12 \, \xrep
               + 12\right)} \yimp} \\
     &= \frac{2((\xrep^2+\yimp^2)(\xrep+3)-4)}{{\left(3(\xrep+2)^{2} - \yimp^2              \right)} \yimp}, \\
    h_3(\xrep,\yimp) &= \frac{3 \, \xrep^{4} + 2 \, \xrep^{2} \yimp^{2} - \yimp^{4} + 24 \, \xrep^{3} + 24 \, \xrep \yimp^{2} + 54 \, \xrep^{2} + 54 \, \yimp^{2} - 81}{4 \, {\left(\xrep + \yimp + 3\right)} {\left(\xrep - \yimp + 3\right)} {\left(\xrep + 3\right)} \yimp} \\           &= \frac{3(\xrep+5)(\xrep^2+\yimp^2)(\xrep+3)-(\yimp^2-9)(\xrep^2+\yimp^2)-81} {4((\xrep+3)^2-\yimp^2)(\xrep+3)\yimp}   .
  \end{align*}

  Using \eqref{eq:thm74recap}, we can now show 
  \begin{align}
  \label{eq:uppermars-by-rational-in-proof}
      \mars_m^+= \left\{\xrep{} + \iu \yimp{} \in \bbC \setsep \yimp>0, h_m(\xrep,\yimp)<0 \text{ and } \xrep{} > -m+ \yimp
  \cot\left(\frac{\pi}{m+1}\right)\right\}
  \end{align}
  as follows:
  If $\xrep{} + \iu \yimp{}\in \mars_m^+$, then $\yimp >0$ and $F(\xrep,\yimp,m)<F(\xrep,\yimp,0)$ and as $\cot$ is monotonically decreasing on $(0,\pi)$ we have $\cot\left((m+1)\arccot(\frac{\xrep+m}{\yimp})\right) >
  \frac{\xrep}{\yimp}$. 
  Therefore $h_m(\xrep,\yimp)<0$. The final condition in the set description on the right of \eqref{eq:uppermars-by-rational-in-proof} is satisfied due to \Cref{lem:quarter-plane}. Thus $\xrep{} + \iu \yimp{}$ is contained in the set on the right hand side of \eqref{eq:uppermars-by-rational-in-proof}.
  Conversely, if $\xrep{} + \iu \yimp{}$ is contained in the set on the right hand side of \eqref{eq:uppermars-by-rational-in-proof}, then the final condition implies $F(\xrep,\yimp,m)<\pi$ and then $h_m(\xrep,\yimp)<0$ implies \begin{align*}
    F(\xrep,\yimp,m)=\arccot(\cot(F(\xrep,\yimp,m)))<F(\xrep,\yimp,0). 
  \end{align*}
  This shows $\xrep{} + \iu \yimp{}\in \mars_m^+$.
  
  Using our explicit formulas for $h_1,h_2$ and $h_3$ we derive the
  explicit formulas for $\mars_1^+,\mars_2^+$ and $\mars_3^+$ as follows.
  Since we are interested in $\mars_m^+$ we always assume $\yimp>0$.
  
  $(m=1)$: 
  By \eqref{eq:uppermars-by-rational-in-proof}
  we have
  \begin{align*}
    \mars^+_1 = \{\xrep{} + \iu \yimp{} \in \bbC \setsep \xrep^2+\yimp^2<1,\,\xrep>-1,\,\yimp>0\},  \end{align*}
  where the condition $\xrep>-1$ is clearly redundant.
  
  $(m=2)$: Assume that $\xrep+\iu\yimp{} \in \mars_2^+$. Then $\xrep>-2+\frac{\sqrt{3}}{3}\yimp$,
  hence \[\left(3(\xrep+2)^2-\yimp^2\right)\yimp>0.\] Thus $h_2(\xrep,\yimp) <0$ implies
  $(\xrep^2+\yimp^2)(\xrep+3)<4$.
  On the other hand,
  assume $(\xrep^2+\yimp^2)(\xrep+3)<4$ and $\xrep>-2$.
  Then $\xrep<1$ and by \Cref{lem:R2-bound-1} we have
   \begin{align*}
    3(\xrep+2)^2>\frac{4}{\xrep+3}-\xrep^2>\yimp^2
  \end{align*}
  Therefore $h_2(\xrep,\yimp)<0$
  and $\xrep+\iu\yimp{} \in \mars_2^+$.

  $(m=3)$: Assume that $\xrep>-3+\yimp$.
  Then $(\xrep+3)^2-\yimp^2>0$ and $\xrep+3>0$, hence $h_3(\xrep,\yimp)<0$ if and only if
  \begin{align*}
    3(\xrep^2+\yimp^2)(\xrep+3)(\xrep+5)+(9-\yimp^2)(\xrep^2+\yimp^2)&<81. \qedhere
  \end{align*}
\end{proof}

\begin{prop}[The upper boundary of MARS]
  \label{lem:upper-boundary-g}
  There are continuous functions
  $g_m:[-m,1] \to [0,\infty)$, $m\in\bbN$, with $g_m(-m) = g_m(1) =0$, which are continuously differentiable
  and positive on $(-m,1)$, such that
     \begin{align}
         \mars_m^+ &= \{ \xrep+\iu\yimp{} \in \bbC \setsep \xrep{} \in
                 (-m,1),\; \yimp{} \in (0,g_m(\xrep{})) \} .
         \label{eq:Fset-char}
     \end{align} 
\end{prop}
\begin{proof}
  For fixed $\xrep\in \bbR$ and $m> 0$
  set $f_{m,\xrep}(\yimp):=F(\xrep,\yimp,m)-F(\xrep,\yimp,0)$, 
  see \Cref{fig:f3x}.
  The assertion of \eqref{eq:Fset-char} thus concerns the question for which $(\xrep,\yimp)\in \bbR\times (0,\infty)$ we have $f_{m,\xrep}(\yimp) <0$.
  
  Note first that for all $\xrep\in \bbR$ we have $\lim_{\yimp \to \infty} f_{m,\xrep}(\yimp)= m  \arccot(0)= \frac{m\pi}{2}$ and that for 
  $\lim_{\yimp \to 0^+} f_{m,\xrep}(\yimp)$
   we have
   \begin{equation}
     \label{eq:lim_fmx_y0}
    \lim_{y \to 0} f_{m,\xrep{} }(y) = \left\{ \quad \begin{matrix}
        m\pi &\;\;& \xrep{} \in (-\infty,-m)\\
        \frac{m-1}{2} \pi && x = -m\\
         - \pi && \xrep{}  \in (-m,0) \\
        -\frac{\pi}{2} && \xrep{}  = 0 \\
        0 && \xrep{}  \in (0, \infty)
    \end{matrix}\right.
  \end{equation}
\begin{figure}
\begin{center}
\begin{tikzpicture}
[declare function={F(\x,\y,\k)=%
  (\k+1)*rad(atan2(1,(\x+\k)/\y));
  f(\x,\y,\k)=F(\x,\y,\k)-F(\x,\y,0);}]
\begin{axis}[width=0.54\textwidth,
    height=6.5cm,
    axis lines=middle,
    xmin=0, xmax=3.2,
    ymin=-3.3, ymax=18,
    samples=400,
    legend style={legend cell align=left},
    extra y ticks = {0},
    xlabel={$\yimp$},
    ylabel={$f_{3,\xrep}$},
    ytick={-pi,0,pi,2*pi,3*pi,4*pi,5*pi,6*pi},
    yticklabels={
    $-\pi$,$0$,$\pi$,$2\pi$,$3\pi$,$4\pi$,$5\pi$,$6\pi$},
    x label style={at={(current axis.right of origin)},anchor=north},
    thick,
    domain=0.01:10.2,
    mark=none,
    cycle list/Dark2,
]
\addplot+[mark=none]{f(-5,x,3)};
\addplot+[mark=none]{f(-4,x,3)};
\addplot+[mark=none]{f(-3,x,3)};
\addplot+[mark=none]{f(-2,x,3)};
\legend{$\xrep=-5$,$\xrep=-4$,$\xrep=-3$,$\xrep=-2$}
\end{axis}
\end{tikzpicture}~
\begin{tikzpicture}
[declare function={F(\x,\y,\k)=%
  (\k+1)*rad(atan2(1,(\x+\k)/\y));
  f(\x,\y,\k)=F(\x,\y,\k)-F(\x,\y,0);}]
\begin{axis}[width=0.54\textwidth,
    height=6.5cm,
    axis lines=middle,
    xmin=0, xmax=3.2,
    ymin=-3.3, ymax=7,
    samples=400,
    legend style={legend cell align=left},
    extra y ticks = {0},
    xlabel={$\yimp$},
    ylabel={$f_{3,\xrep}$},
    ytick={-pi,0,pi,2*pi,3*pi},
    yticklabels={
    $-\pi$,$0$,$\pi$,$2\pi$,$3\pi$},
    x label style={at={(current axis.right of origin)},anchor=north},
    thick,
    domain=0.01:10.2,
    mark=none,
    cycle list/Dark2,
]
\addplot+[mark=none]{f(-1,x,3)};
\addplot+[mark=none]{f(0,x,3)};
\addplot+[mark=none]{f(0.1,x,3)};
\addplot+[mark=none]{f(1,x,3)};
\legend{$\xrep=-1$,$\xrep=0$,$\xrep=0.1$,
$\xrep=1$}
\end{axis}
\end{tikzpicture}
\caption{$f_{3,\xrep}$ for various $\xrep$}
\label{fig:f3x}
\end{center}
\end{figure}
  
  We already know from \Cref{lem:quarter-plane} that $\xrep+\iu\yimp{} \in \mars^+_m$ implies $\xrep{}  \in (-m,1)$.
  Within this interval, two phenomena happen.
  For $-m<\xrep{} \leq 0$ the derivative $f_{m,\xrep{} }'$
  is nonnegative. For $\xrep{} \in (0,1)$ the derivative $f_{m,\xrep{} }'$
  is initially negative with exactly one change in sign. In both cases we obtain the existence of a  value $g_m(\xrep{} )$ such that $f_{m,\xrep{} }(\yimp)<0$ precisely on the interval
  $(0,g_m(\xrep{} ))$.
  More precisely, consider
  \begin{align*}
    f_{m,\xrep{} }'(\yimp) = \frac{(m+1)(m+\xrep{} )}{\yimp^2+(m+\xrep{} )^2}-\frac{\xrep{} }{\yimp^2+\xrep{} ^2}.
  \end{align*}
  Now $f_{m,\xrep{} }'(\yimp) \leq 0$, if and only if
  \begin{align*}
    (m+1)(m+\xrep{} )(\yimp^2+\xrep{} ^2) \leq \xrep{} (\yimp^2+(m+\xrep{} )^2)
  \end{align*}
  or, equivalently, if and only if
  \begin{equation}
  \label{eq:moncond}
       (\xrep{} +m+1) \yimp^2 \leq \xrep{} (\xrep{} +m)(1-\xrep{} ).
  \end{equation}
  For $-m<\xrep{} \leq 0$ \eqref{eq:moncond} is not satisfied and so we
  have $f_{m,\xrep{} }' >0$.
  Since in this case $\lim_{\yimp \to 0^{+}} f_{m,\xrep{} }(\yimp)<0$
  and $\lim_{\yimp\to \infty} f_{m,\xrep{} }(\yimp)>0$
  there is a unique value $g_m(\xrep{} )$ such that
  $f_{m,\xrep{}}(\yimp)<0$ if and only if $\yimp < g_m(\xrep{} )$.

  For $\xrep{} \in (0,1)$, the right hand side of
  \eqref{eq:moncond} is positive, and the factor in front of $\yimp^2$ is positive
  as well. Thus there is again a unique constant $g_m(\xrep{} )$ such that the
  inequality holds if and only if $\yimp\leq g_m(\xrep{} )$. In both cases
  this implicitly defines a function $g_m$.

  The arguments so far show \eqref{eq:Fset-char}. It remains to prove
  the properties of the map $g_m:[-m,1]\to [0,\infty)$. For $\xrep{} \in (-m,1)$
  the function $g_m$ is implicitly defined by the equation
  \begin{equation}
      H_m(\xrep{} ,g_m(\xrep{} )) := F(\xrep{} ,g_m(\xrep{} ),m) - F(\xrep{} ,g_m(\xrep{} ),0) = 0.
  \end{equation}
  By the previous considerations we already know that for each $\xrep{} $ the
  solution of this equation is unique in the interval
    $(0,\infty)$. Also for $\xrep{}  \in (-m,1)$ we have
\begin{equation}
    \frac{\partial}{\partial \yimp} H_m(\xrep{} ,g_m(\xrep{} )) =
    f'_{m,\xrep{} }(g_m(\xrep{} )) >0,
    \label{eq:Hm}
\end{equation}
so by the implicit function theorem
the function
$g_m : (-m,1) \to (0,\infty)$ is continuously differentiable. 

Now we investigate
the behavior of $g_m$ at the boundary points of $(-m,1)$.
Note that
\begin{align}
  0<g_m(\xrep{} ) \leq \frac{\xrep{} +m}{\cot(\frac{\pi}{m+1})}
  \label{eq:bounds-for-g_m}
\end{align}
for $\xrep{}  \in (-m,1)$, see \Cref{lem:quarter-plane}.
Thus clearly $\lim_{\xrep{}  \to (-m)^+} g_m(\xrep{} ) =0$.

Now assume $\lim_{\xrep{}  \to 1^-} g_m(\xrep{} ) \neq 0$.
From \eqref{eq:bounds-for-g_m}
we can derive by compactness 
the existence of a sequence $(\xrep{} _n)_{n \in \bbN}$
in $(-m,1)$ converging to $1$
such that $g_m(\xrep{} _n) \to z>0$ for $n \to \infty$.
By the continuity of $g_m$ together with 
\eqref{eq:Hm} we would have $f_{m,1}(z)=F(1,z,m)-F(1,z,0)=0$.
On the other hand, \eqref{eq:lim_fmx_y0}
together with \eqref{eq:moncond} shows that
$f_{m,1}$ is positive on $(0,\infty)$,
contradiction.

Therefore $g_m$ extends continuously to $[-m,1]$ by setting $g_m(-m) = 0=g_m(1)$.
\end{proof}
Using a geometric argument we will now
  describe another useful outer approximation of $\mars_m$.
\begin{prop}[A circumscribed circle for MARS]
  \label{prop:bounding-circle}
  For $m \in \bbN$ we have
  \begin{align*}
    \mars^+_m \subseteq \left\{\xrep+\iu\yimp{} \in \bbC \setsep \left(\xrep-\frac{1-m}{2}\right)^2+\yimp^2 \leq \frac{(m+1)^2}{4}\right\} = B_{\tfrac{m+1}{2}}\left( \frac{1-m}{2}\right).
  \end{align*}
\end{prop}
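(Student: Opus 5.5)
The plan is to argue by contraposition, using the description of $\mars_m^+$ from \Cref{thm:explicit-formulas-for-mars}. First note that the closed disk in the statement is the disk with diameter $[-m,1]$. For $a=\xrep+\iu\yimp$, membership is equivalent to
\[
\Bigl(\xrep-\tfrac{1-m}{2}\Bigr)^2+\yimp^2\le\tfrac{(m+1)^2}{4}
\iff \yimp^2\le(\xrep+m)(1-\xrep).
\]
So it suffices to take $\yimp>0$ with $\yimp^2>(\xrep+m)(1-\xrep)$ and show that
\[
(m+1)\arccot\Bigl(\tfrac{\xrep+m}{\yimp}\Bigr)\ge\arccot\Bigl(\tfrac{\xrep}{\yimp}\Bigr).
\]

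Set $\varphi:=\arccot\bigl(\frac{\xrep+m}{\yimp}\bigr)\in(0,\pi)$ and $u:=\cot\varphi=\frac{\xrep+m}{\yimp}$. If $(m+1)\varphi\ge\pi$ we are done, since $\arccot<\pi$. Otherwise $(m+1)\varphi\in(0,\pi)$. Because $m\ge1$, this gives $\varphi<\pi/2$, hence $u>0$, and also $m\varphi\in(0,\pi)$.

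Next I bound $c:=\cot(m\varphi)$ from above by $u/m$.
\begin{itemize}
\item If $m\varphi\ge\pi/2$, then $c\le0<u/m$.
\item If $m\varphi<\pi/2$, then $\tan$ is convex on $[0,\pi/2)$ with $\tan0=0$, so $\tan(m\varphi)\ge m\tan\varphi$, which is again $c\le u/m$.
\end{itemize}

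Now apply the addition formula
\[
\cot((m+1)\varphi)=\frac{uc-1}{u+c}.
\]
Its denominator equals $\frac{\sin((m+1)\varphi)}{\sin\varphi\,\sin(m\varphi)}>0$. As a function of $c$ on the set where $u+c>0$, the right-hand side has derivative $\frac{u^2+1}{(u+c)^2}>0$, so it is increasing in $c$. Since $u+u/m>0$, substituting $c\le u/m$ gives
\[
\cot((m+1)\varphi)\le\frac{u^2/m-1}{u+u/m}=\frac{u^2-m}{(m+1)u}.
\]

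The main step is to check that this upper bound is at most $\xrep/\yimp$. Multiplying through by $(m+1)u\,\yimp^2>0$, the inequality $\frac{u^2-m}{(m+1)u}\le\frac{\xrep}{\yimp}$ becomes
\[
(\xrep+m)^2-m\yimp^2\le(m+1)\xrep(\xrep+m),
\]
that is, $m(\xrep+m)(1-\xrep)\le m\yimp^2$. This is exactly our hypothesis $\yimp^2>(\xrep+m)(1-\xrep)$, with room to spare.

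Hence $\cot((m+1)\varphi)\le\xrep/\yimp=\cot\bigl(\arccot(\xrep/\yimp)\bigr)$. Both angles lie in $(0,\pi)$, where $\cot$ is decreasing, so $(m+1)\varphi\ge\arccot(\xrep/\yimp)$. By \Cref{thm:explicit-formulas-for-mars} this means $\xrep+\iu\yimp\notin\mars_m^+$, which proves the inclusion. The point needing most care is the sign bookkeeping: it is why the case $(m+1)\varphi\ge\pi$ is split off first, and why positivity of $u+c$ and $u+u/m$ is checked before the monotonicity argument is used.
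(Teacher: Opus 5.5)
Your proof is correct, and it takes a different route from the paper's.

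The paper verifies $(m+1)\arccot\bigl(\frac{\xrep+m}{\yimp}\bigr)\ge\arccot\bigl(\frac{\xrep}{\yimp}\bigr)$ only for points $C=\xrep+\iu\yimp$ on the upper half of the circle with diameter $[-m,1]$, and it does so synthetically. The two arccotangents are the angles at $A=-m$ and $D=0$, Thales gives a right angle at $C$, and \Cref{lem:right-angle-split-m} (intercept theorem plus convexity of $\tan$) gives $\delta\le(m+1)\alpha$. To pass from the circle to the whole exterior, the paper then relies on \Cref{lem:upper-boundary-g}. That is, it uses the fact that for fixed $\xrep$ the admissible $\yimp$ form an interval $(0,g_m(\xrep))$, which needed the sign analysis of $f_{m,\xrep}'$ and the implicit function theorem.

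You use the same analytic core, the superadditivity $\tan(m\varphi)\ge m\tan\varphi$ coming from convexity. You combine it with the $\cot$ addition formula and monotonicity in $c$, so that everything reduces to the identity $(\xrep+m)^2-(m+1)\xrep(\xrep+m)=m(\xrep+m)(1-\xrep)$. This treats every point with $\yimp^2\ge(\xrep+m)(1-\xrep)$ directly, and your first case $(m+1)\varphi\ge\pi$ absorbs the region $\xrep\le -m$. So your argument needs only \Cref{thm:explicit-formulas-for-mars} and no information about $g_m$.

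The paper's version is more visual and matches the triangle picture in \Cref{rem:geometric-interpretation}; yours is shorter and self-contained. One small bonus: your final inequality holds with equality on the circle, so your argument also excludes the circle itself. It therefore yields containment in the open ball $B_{\frac{m+1}{2}}\bigl(\frac{1-m}{2}\bigr)$, which is what the right-hand side of the statement literally denotes.
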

\begin{proof}
  Let $A=-m\in \bbC$ and $B=1 \in \bbC$. Then the bounding circle $\mathcal{C}_m$ we
  are interested in has diameter $\overline{AB}$.
  By \Cref{thm:explicit-formulas-for-mars} and \Cref{lem:upper-boundary-g} 
  it is enough to show that
  $(m+1)\arccot\left(\frac{\xrep+m}{\yimp}\right) \geq \arccot\left(\frac{\xrep}{\yimp}\right)$ 
  for $C=\xrep+\iu\yimp{} \in \mathcal{C}_m$ with $\yimp>0$.
  Note that %${\color{magenta} weg?:\arctan(\frac{\xrep+m}{\yimp})=\angle CAB}$ \fabian{
  $\arccot(\frac{\xrep+m}{\yimp})=\angle CAB$
  and 
  $\arccot(\frac{\xrep}{\yimp})=\angle CDB$ where $D=0+0\iu \in \bbC$.
  By Thales's theorem the triangle $ABC$ has a right angle at $C$
  and $\abs{DA}=m\abs{BD}$. Therefore the assertion follows
  by \Cref{lem:right-angle-split-m}.
\end{proof}

\begin{rem}
    Since $\sigma(\calM_m(a))$
    depends continuously on $a$,
    we know that 
    on the boundary of $\mars_m$
    the polynomial $p_{m,a}$
    must have a root with 
    absolute value $1$.
    We can solve $p_{m,a}(s) = 0$ in \eqref{eq:pma}
    for $a$ and obtain
    \begin{align*}
    a = \frac{m(s^{m+1}-s^{m})}{s^m-1}.
    \end{align*}
    Using the standard parametrization of the unit circle,
    we obtain that the boundary of $\mars_m$,
    and in particular the graph of $g_m$ 
    considered as a subset of $\bbC = \bbR^2$, must
    be contained in the closure
    of the curves
    \begin{align}
        \alpha \mapsto \frac{m(e^{i\alpha(m+1)}-e^{i\alpha m})}{e^{i\alpha m}-1},
        \alpha \in \left(\frac{2\pi\ell}{m},\frac{2\pi(\ell+1)}{m}\right),\;
        \ell \in \{0,\dots,m-1\}.
        \label{eq:boundary-locus}
    \end{align}
    \begin{figure}
        \begin{center}
        \includegraphics[width=8cm]{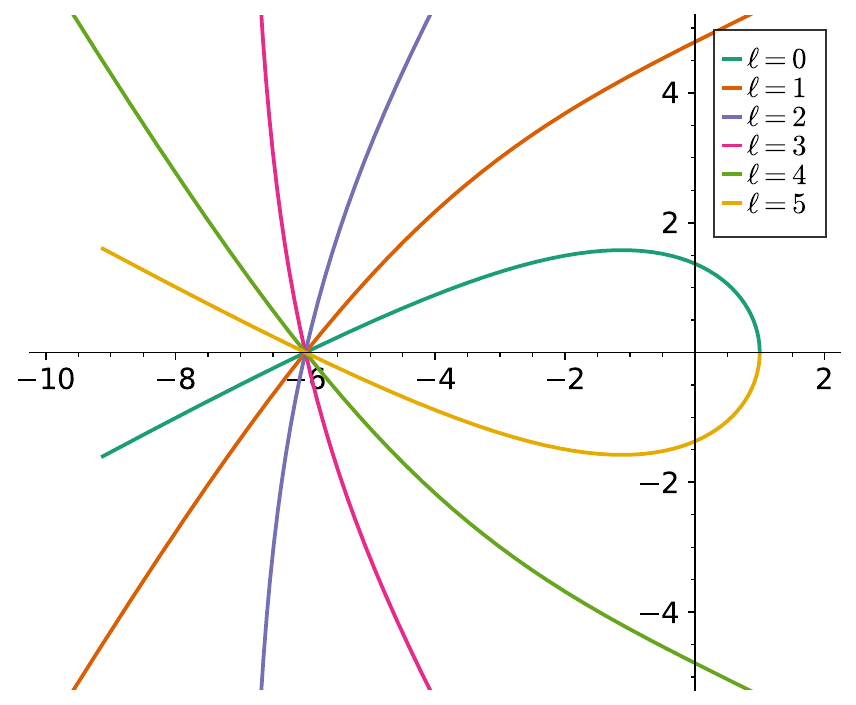}
        \end{center}
        \caption{The curves from \eqref{eq:boundary-locus}
        for $m=6$}
        \label{fig:spider}
    \end{figure}
    See \Cref{fig:spider} for an illustration. 
    This procedure for
    finding the boundary of $\mars_m$ 
    is known in the literature on the numerics of 
    differential equations as the 
    \emph{boundary locus method},
    see \cite[Section 21.4]{salgadoClassicalNumericalAnalysis2022}.
    While
    we know that $\mars_m$ must be a bounded region
    in the complement of these curves, the methods
    used in the remainder of this paper seem
    to be better suited to derive properties
    of $\mars_m$.
\end{rem}

\section{Monotonicity of MARS}
\label{sec:monotonicity}
In this section
we will show that the open sets $(\mars_m)_{m \in \bbN}$ form
an increasing sequence and we will describe the limit set $\mars$ explicitly.

We start by describing a uniform superset of $\mars_m^+$
for all windows sizes $m \in \bbN$.
\begin{lem}[Uniform Bounds for MARS] 
  \label{lem:y-coty}
  If $\xrep+\iu\yimp{} \in \mars_m^+$ for some $m \in \bbN$,
  then $\yimp{} <\pi$ and $\xrep{} < \yimp\cot(\yimp{})$.
\end{lem}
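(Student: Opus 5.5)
Starting from the characterization \eqref{eq:thm74recap}, I take $\xrep+\iu\yimp\in\mars_m^+$, so $\yimp>0$ and
\[
(m+1)\arccot\left(\frac{\xrep+m}{\yimp}\right)<\arccot\left(\frac{\xrep}{\yimp}\right)<\pi .
\]
Set $\theta:=\arccot\left(\frac{\xrep+m}{\yimp}\right)\in(0,\pi)$; by \Cref{lem:quarter-plane}, $\xrep+m>0$, so in fact $\theta\in(0,\pi/2)$ and $(m+1)\theta<\pi$. The plan is to compare $\theta$ with the quantity $\yimp/(\xrep+m)$, i.e. to use the elementary inequality $\arccot(r)\ge \ldots$ in the form $\tan\theta>\theta$ for $\theta\in(0,\pi/2)$. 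Since $\tan\theta=\yimp/(\xrep+m)$, this gives $\theta<\yimp/(\xrep+m)$, but that is the wrong direction for bounding $\yimp$. So instead I will work with the geometric meaning: $\theta$ is the angle at $-m$ of the point $a$, and $\arccot(\xrep/\yimp)=\arg a$.

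The key step is to rewrite the condition as $\arg a>(m+1)\theta$ and use $\arg a - \theta$ as the angle subtended at $a$ by the segment $[-m,0]$ (exterior angle in the triangle with vertices $-m,0,a$). Then $\arg a=\theta+\psi$ with $\psi$ the angle at $a$, and the condition becomes $\psi>m\theta$. Splitting the segment $[-m,0]$ into $m$ unit segments $[-j-1,-j]$, the angle at $a$ subtended by $[-m,0]$ is the sum of the angles $\psi_j$ subtended by the unit pieces, and each unit piece subtends an angle of at least... this is where I instead want to pass to the averaged/limit picture: define $\phi(t):=\arccot\left(\frac{\xrep+t}{\yimp}\right)$ for $t\in[0,m]$, a function with $\phi(m)=\theta$, $\phi(0)=\arg a$. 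The condition is $\phi(0)-\phi(m)>m\,\phi(m)$. I will show this forces $\phi(0)>\yimp$-dependent bound: namely, compare with the continuous iteration, claiming that $t\mapsto (t+1)\phi(t)$ or rather $\frac{\phi(0)-\phi(t)}{t}$ versus $\phi(t)$ yields, after letting the discrete step be controlled, the inequality $\arg a>\yimp$ in the form $\arccot(\xrep/\yimp)>\yimp$, which is exactly $\xrep<\yimp\cot\yimp$ together with $\yimp<\pi$ (since $\arccot<\pi$).

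Concretely, the expected mechanism: $\phi'(t)=-\yimp/(\yimp^2+(\xrep+t)^2)$, so $\phi(0)-\phi(m)=\int_0^m \yimp/(\yimp^2+(\xrep+t)^2)\,dt$, while $m\phi(m)=m\arccot((\xrep+m)/\yimp)$. Since $\arccot(r)\ge$ something like $\yimp\int_m^\infty dt/(\yimp^2+(\xrep+t)^2)$ (indeed equality: $\phi(m)=\int_m^\infty \yimp\,dt/(\yimp^2+(\xrep+t)^2)$), the condition reads $\phi(0)>(m+1)\phi(m)$. Then I would use concavity/decrease of the integrand for $t$ large to show $(m+1)\phi(m)\ge$ a quantity that dominates $\phi(0)-\yimp$ or directly that $\phi(0)>(m+1)\phi(m)\ge \yimp$ via $(m+1)\arccot(\frac{\xrep+m}{\yimp})\ge (m+1)\frac{\yimp}{\ldots}$; the cleanest target is the inequality $(m+1)\arccot\left(\frac{\xrep+m}{\yimp}\right)\ge\yimp$ whenever $\xrep>-m$, which I would try to prove using $\arccot(r)\ge 1/(r+\text{const})$-type bounds and $\xrep<1$ (from \Cref{lem:quarter-plane}), i.e. reduce to $\arctan(u)\ge u/(1+u/\ldots)$. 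This elementary estimate is the main obstacle; I expect it belongs to the auxiliary estimates of \Cref{sec:various-estimates}, and I would first try $\arccot(r)\ge \frac{1}{r+1}$-style bounds with $r=(\xrep+m)/\yimp\le(m+1)/\yimp$, giving $(m+1)\arccot(r)\ge (m+1)\arctan(\yimp/(m+1))$, which is close but slightly below $\yimp$, so refinement using the strictness and the $\arccot(\xrep/\yimp)$ side will be needed.
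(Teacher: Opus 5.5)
Your reduction is sound: if $\yimp\le(m+1)\arccot\bigl(\frac{\xrep+m}{\yimp}\bigr)$ held for every $\xrep+\iu\yimp\in\mars_m^+$, then the defining inequality \eqref{eq:thm74recap} would give $\yimp<\arccot(\xrep/\yimp)<\pi$. Since $\cot$ is decreasing on $(0,\pi)$, this is the same as $\xrep<\yimp\cot\yimp$. But that inequality is precisely what you leave unproved, and the route you sketch for it cannot succeed. Using only $\xrep<1$, i.e.\ $\frac{\xrep+m}{\yimp}<\frac{m+1}{\yimp}$, yields at best $(m+1)\arctan\bigl(\frac{\yimp}{m+1}\bigr)$, which is strictly smaller than $\yimp$ for every $\yimp>0$. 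The problem is not a lossy estimate: on the strip $-m<\xrep<1$ the inequality is simply false. For fixed $\yimp>0$ it fails for all $\xrep<1$ close enough to $1$, by continuity from $\xrep=1$. It also fails for every $\xrep>-m$ once $\yimp\ge(m+1)\frac{\pi}{2}$. So you need a joint constraint on $(\xrep,\yimp)$ that encodes membership in $\mars_m^+$. Neither the integral representation of $\phi$ nor the angle-splitting idea provides one as written, and ``refinement using the strictness and the $\arccot(\xrep/\yimp)$ side'' is not an argument.

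The missing ingredient is already in the paper: \Cref{prop:bounding-circle} gives $(\xrep+m)^2+\yimp^2\le(m+1)(\xrep+m)$, the form used in \Cref{lem:derivative-m}. With $u:=\yimp/(\xrep+m)>0$ (positive by \Cref{lem:quarter-plane}) this reads $(\xrep+m)(1+u^2)\le m+1$, hence
\[
\yimp=u(\xrep+m)\le\frac{(m+1)u}{1+u^2}<(m+1)\arctan u=(m+1)\arccot\Bigl(\frac{\xrep+m}{\yimp}\Bigr).
\]
The strict step holds because $\arctan u-\frac{u}{1+u^2}$ vanishes at $0$ and has derivative $\frac{2u^2}{(1+u^2)^2}>0$. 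With this step supplied, your proof is complete and genuinely different from the paper's. The paper shows via \Cref{lem:lower-bound-fm} that the curve $\xrep=\yimp\cot\yimp$ lies outside $\mars_m^+$. It then needs the vertical-slice description of \Cref{lem:upper-boundary-g} and the monotonicity of $\yimp\cot\yimp$ from \Cref{lem:ycoty-bijective}. Your route needs only the circumscribed circle and one elementary inequality, and it also gives the quantitative bound $\yimp<(m+1)\arccot\bigl(\frac{\xrep+m}{\yimp}\bigr)$ on $\mars_m^+$.
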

\begin{proof}
  We start by showing
  that $F(\yimp{}\cot(\yimp{}),\yimp{},m)\geq F(\yimp{}\cot(\yimp{}),\yimp{},0)$
  for $\yimp{} \in (0,\pi)$.
  Substituting the definition of $F$ we thus want to show
  \begin{align*}
    \yimp{} \in (0,\pi) \quad \Rightarrow \quad (m+1) \arccot\left(\cot(\yimp{}) + \frac{m}{\yimp{}}\right) \geq \yimp{}
  \end{align*}
  or equivalently
  \begin{align*}
    \yimp{} \in (0,\pi) \quad \Rightarrow \quad \cot(\yimp{}) + \frac{m}{\yimp{}} \leq \cot\left(\frac{\yimp{}}{m+1}\right).
  \end{align*}
  A further equivalent formulation of this statement is
  \begin{align*}
    \yimp{} \in (0,\pi) \quad \Rightarrow \quad m \leq \yimp{}\left(\cot\left(\frac{\yimp{}}{m+1}\right)-\cot(\yimp{})\right).
  \end{align*}
  This claim follows from
  \Cref{lem:lower-bound-fm}.

  Now let $\xrep+\iu\yimp{} \in \mars^+_m$.
  By \Cref{lem:quarter-plane} we know that $\xrep{} \in (-m,1)$. There is $\tilde{\yimp{}} \in (0,\pi)$
  with $\xrep=\tilde{\yimp{}} \cot(\tilde{\yimp{}})$ by \Cref{lem:ycoty-bijective}.
  By the previous argument 
  together with \Cref{thm:explicit-formulas-for-mars}, we have
  $\xrep+\iu\tilde{\yimp} \not \in \mars^+_m$. Together with
  \Cref{lem:upper-boundary-g}
  this shows that $\yimp{} < \tilde{\yimp{}}<\pi$
  and hence $\xrep{} =\tilde{\yimp{}}\cot(\tilde{\yimp{}}) < \yimp{} \cot(\yimp{})$.
\end{proof}

\begin{lem}[$F$ is monotonically decreasing in $m$]
  \label{lem:derivative-m}
  For every $m_0\geq 1$ and all $\xrep+\iu\yimp{} \in \mars_{m_0}^+$, we have
  $\frac{\partial}{\partial m} F(\xrep,\yimp,m)< 0$ for $m > m_0$.
\end{lem}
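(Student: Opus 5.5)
The plan is to compute $\partial F/\partial m$ explicitly and reduce the sign condition to membership in the circumscribed circle from \Cref{prop:bounding-circle}, taken for the larger window width $m$.

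\textbf{Step 1: the derivative.} Put $u := \frac{\xrep+m}{\yimp}$. Since $\frac{d}{du}\arccot(u) = -\frac{1}{1+u^2}$, we get
\begin{align*}
  \frac{\partial}{\partial m} F(\xrep,\yimp,m) = \arccot(u) - \frac{m+1}{\yimp}\,\frac{1}{1+u^2} = \arccot\left(\frac{\xrep+m}{\yimp}\right) - \frac{(m+1)\yimp}{\yimp^2+(\xrep+m)^2}.
\end{align*}
We therefore have to show $\arccot(u)\,(\yimp^2+(\xrep+m)^2) < (m+1)\yimp$.

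\textbf{Step 2: elementary bound on $\arccot$.} Let $\xrep+\iu\yimp \in \mars_{m_0}^+$ and $m>m_0$. By \Cref{lem:quarter-plane}, $\xrep > -m_0 > -m$, so $u>0$. For $u>0$ we have $\arccot(u) = \arctan(1/u) < 1/u = \frac{\yimp}{\xrep+m}$. It then suffices to show
\begin{align*}
  \frac{\yimp\,(\yimp^2+(\xrep+m)^2)}{\xrep+m} \leq (m+1)\yimp,
\end{align*}
which, after dividing by $\yimp>0$ and multiplying by $\xrep+m>0$, is equivalent to $\yimp^2 \leq (\xrep+m)(1-\xrep)$. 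This in turn is $(\xrep+m)(\xrep-1)+\yimp^2 \leq 0$, i.e.
\begin{align*}
  \left(\xrep-\frac{1-m}{2}\right)^2+\yimp^2 \leq \frac{(m+1)^2}{4}.
\end{align*}
So we need $\xrep+\iu\yimp$ to lie in the closed disk with diameter $[-m,1]$.

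\textbf{Step 3: nested disks.} By \Cref{prop:bounding-circle} applied with $m_0$, the point $\xrep+\iu\yimp$ lies in the closed disk with diameter $[-m_0,1]$, i.e. $(\xrep+m_0)(\xrep-1)+\yimp^2 \le 0$. Since $\xrep<1$ by \Cref{lem:quarter-plane}, we have $\xrep - 1 < 0$, and $m>m_0$ gives $\xrep + m > \xrep + m_0$. Hence $(\xrep+m)(\xrep-1) < (\xrep+m_0)(\xrep-1)$, and so $(\xrep+m)(\xrep-1)+\yimp^2 < 0$. This is exactly the disk condition from Step 2, in fact with strict inequality.

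\textbf{Conclusion.} Combining the strict inequality $\arccot(u)<1/u$ from Step 2 with Step 3 gives $\frac{\partial}{\partial m}F(\xrep,\yimp,m)<0$. No step appears to be a real obstacle. The only point needing care is the sign bookkeeping: $\xrep+m>0$ and $\xrep<1$, both of which come from \Cref{lem:quarter-plane}.
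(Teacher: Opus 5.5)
Your proof is correct and follows the paper's argument step for step. You use the same derivative, the bound $\arccot(u)<1/u$ for $u>0$, and the reduction of the sign condition to membership in the closed disk with diameter $[-m,1]$ via the circumscribed-circle proposition. Your Step 3 also spells out something the paper skips: the nesting of the disk for $m_0$ inside the disk for $m>m_0$, which uses $\xrep<1$. The paper instead asserts that the disk condition for window width $m$ \enquote{follows directly} from the proposition, although that proposition is stated for $m_0$.
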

\begin{proof}
Fix $m_0 \in \bbN$ and $\xrep+\iu\yimp{} \in \mars_{m_0}^+$.
  Define $f: (m_0,\infty) \to \bbR$
  by $f(m)=F(\xrep{},\yimp{},m)$.
  We have $f'(m) = \arccot(\frac{\xrep{}+m}{\yimp{}})-\frac{\yimp{}(m+1)}{\yimp{}^2+(\xrep{}+m)^2}$, $m\in (m_0,\infty)$.
  Hence $f'(m)< 0$
  if and only if \[\arccot\left(\frac{\xrep{}+m}{\yimp{}}\right) < \frac{\yimp(m+1)}{\yimp{}^2+(\xrep{}+m)^2}.\]

For $r>0$ we have $\arctan(r)< r$, hence
  $\arccot(r)=\arctan(\frac{1}{r})< \frac{1}{r}$.  Since $\frac{\xrep{}+m}{\yimp{}}>0$
  it is enough to show that for $m \geq m_0$ we have
\begin{align*}
  \frac{\yimp{}}{\xrep{}+m} \leq \frac{\yimp(m+1)}{\yimp{}^2+(\xrep{}+m)^2}.
\end{align*}
This holds if and only if
\begin{align*}
  \yimp{}^2+(\xrep{}+m)^2 \leq (m+1)(\xrep{}+m).
\end{align*}
This in turn is equivalent to
\begin{align*}
  \yimp{}^2+\left(\xrep{}-\frac{1-m}{2}\right)^2 \leq \frac{(1+m)^2}{4}
\end{align*}
which follows directly from \Cref{prop:bounding-circle}.
\end{proof}

\begin{thm}[MARS is rising]
  \label{thm:monotonicity-mars}
  The sets $\mars_m$, $m\in\bbN$, form an increasing sequence of open sets.
\end{thm}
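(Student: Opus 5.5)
Openness of every $\mars_m$ is \Cref{prop:simple-properties}~\itemref{item:mars-is-open}, so only the inclusion $\mars_m \subseteq \mars_{m+1}$ for all $m \in \bbN$ remains to be shown. By \Cref{cor:decomp-mars} we have $\mars_m = \mars_m^+ \cup (\mars_m^+)^* \cup (-m,1)$. Clearly $(-m,1) \subseteq (-m-1,1)$, and complex conjugation respects inclusions. Hence it suffices to prove $\mathcal{M}^+_m \subseteq \mars_{m+1}^+$, written in the paper's notation as $\mars_m^+ \subseteq \mars_{m+1}^+$.

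Let $\xrep+\iu\yimp \in \mars_m^+$. By \eqref{eq:thm74recap} this means $\yimp>0$ and $F(\xrep,\yimp,m) < F(\xrep,\yimp,0)$, and we must show $F(\xrep,\yimp,m+1) < F(\xrep,\yimp,0)$.

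The plan is to treat $F(\xrep,\yimp,\cdot)$ as a function of the real variable $\mu \in [m,m+1]$ and apply \Cref{lem:derivative-m} with $m_0 = m$. That lemma gives $\frac{\partial}{\partial \mu}F(\xrep,\yimp,\mu) < 0$ for every $\mu > m$. The function $\mu \mapsto F(\xrep,\yimp,\mu) = (\mu+1)\arccot\bigl(\frac{\xrep+\mu}{\yimp}\bigr)$ is continuous, indeed smooth, on $[0,\infty)$. Applying the mean value theorem on $[m,m+1]$ then yields
\begin{align*}
F(\xrep,\yimp,m+1) < F(\xrep,\yimp,m) < F(\xrep,\yimp,0),
\end{align*}
and therefore $\xrep+\iu\yimp \in \mars_{m+1}^+$.

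The only delicate point is that the derivative is only known to be negative on the open half-line $(m,\infty)$, not at $\mu=m$ itself. This does not matter, since the mean value theorem needs differentiability only in the interior and continuity on the closed interval. Continuity at $\mu=m$ is immediate from the formula for $F$.

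All the real work sits in \Cref{lem:derivative-m}, which itself relies on the circumscribed circle of \Cref{prop:bounding-circle}. Once that lemma is available, the present proof is essentially bookkeeping: reduce to the upper half-plane, then integrate the sign of the derivative from $m$ to $m+1$.
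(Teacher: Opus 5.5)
Your proof is correct and follows the paper's argument. The paper likewise takes a point of $\mars_m^+$ and uses the lemma that $F(\xrep,\yimp,\cdot)$ is strictly decreasing beyond $m$ to obtain $F(\xrep,\yimp,m+1)<F(\xrep,\yimp,m)<F(\xrep,\yimp,0)$, concluding $\mars_m^+\subseteq\mars_{m+1}^+$ and hence $\mars_m\subseteq\mars_{m+1}$. You only spell out steps the paper leaves implicit: the mean value theorem step, the reduction to the upper half-plane via the decomposition corollary, and the reference for openness.
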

\begin{proof}
  Let $m \in \bbN$ and $\xrep+\iu\yimp{} \in \mars_{m}^+$.
  By \Cref{lem:derivative-m} we know that
  \[F(\xrep{},\yimp,m+1) < F(\xrep{},\yimp,m) < F(\xrep{},\yimp,0),\] so
  $\xrep+\iu\yimp{} \in \mars_{m+1}^+$.
  Therefore $\mars_{m}^+ \subseteq \mars_{m+1}^+$
  and hence $\mars_{m} \subseteq \mars_{m+1}$.
\end{proof}

\begin{cor}[Explicit formulas for asymptotic MARS] 
\label{cor:explicitformulaasymptoticmars}
We have the
  following formulas for asymptotic MARS
  \begin{align*}
    \mars^+ := \bigcup_{ m \in \bbN} \mars_m^+ 
    &=\left\{ \xrep+\iu\yimp{} \in \bbC \setsep \yimp>0,\; \yimp <
              \arccot\left(\frac{\xrep}{\yimp}\right)\right\} \\
    &=\{a \in \bbC \setsep \imag(a) \in (0,\pi), \imag\left(e^a \overline{a}\right)<0 \}, \\
    \mars  &= \{\xrep{} + \iu \yimp{} \setsep  \xrep<\yimp \cot(\yimp),\; \xrep\in \bbR, \yimp \in 
          (-\pi,0) \cup (0,\pi)\}
          \cup (-\infty,1).
  \end{align*}
\end{cor}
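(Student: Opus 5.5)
The plan is to take the limit $m\to\infty$ in the description \eqref{eq:thm74recap} of $\mars_m^+$, using \Cref{thm:monotonicity-mars} to turn the union into a limit. Fix $\xrep\in\bbR$ and $\yimp>0$. As $m\to\infty$ we have $\frac{\xrep+m}{\yimp}\to\infty$, and $\arccot(r)=\frac1r+O(r^{-3})$ as $r\to\infty$. Hence
\[
F(\xrep,\yimp,m)=(m+1)\arccot\Bigl(\tfrac{\xrep+m}{\yimp}\Bigr)\longrightarrow \yimp .
\]
\textbf{Inclusion $\mars^+\subseteq\{\yimp<\arccot(\xrep/\yimp)\}$.} Let $\xrep+\iu\yimp\in\mars_{m_0}^+$. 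By \Cref{lem:derivative-m}, $m\mapsto F(\xrep,\yimp,m)$ is strictly decreasing on $(m_0,\infty)$. Therefore
\[
\yimp=\lim_{m\to\infty}F(\xrep,\yimp,m)<F(\xrep,\yimp,m_0)<F(\xrep,\yimp,0)=\arccot(\xrep/\yimp).
\]
\textbf{Reverse inclusion.} If $\yimp<\arccot(\xrep/\yimp)$, the limit above shows $F(\xrep,\yimp,m)<F(\xrep,\yimp,0)$ for all large $m$. By \eqref{eq:thm74recap}, $\xrep+\iu\yimp\in\mars_m^+$ for those $m$. So the first formula needs only the limit, and no monotonicity beyond what was used for the first inclusion.

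\textbf{Second formula.} The condition $\yimp<\arccot(\xrep/\yimp)$ forces $\yimp<\pi$, since $\arccot$ takes values in $(0,\pi)$. For $\yimp\in(0,\pi)$, the function $\arccot(\xrep/\yimp)$ equals $\arg a$ for $a=\xrep+\iu\yimp$. The condition $\yimp<\arg a$ says that $\yimp-\arg a\in(-\pi,0)$, because both $\yimp$ and $\arg a$ lie in $(0,\pi)$. This is exactly $\imag(e^{\iu\yimp}\overline a)<0$, and multiplying by $e^{\xrep}>0$ gives $\imag(e^a\overline a)<0$. Conversely, if $\imag a\in(0,\pi)$ and $\imag(e^a\overline a)<0$, the same angle bookkeeping gives $\yimp-\arg a\in(-\pi,0)$.

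For the $\cot$ form, apply the decreasing function $\cot$ on $(0,\pi)$ to $\yimp<\arccot(\xrep/\yimp)$. This gives $\cot\yimp>\xrep/\yimp$, that is, $\xrep<\yimp\cot\yimp$, and the steps reverse.

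\textbf{Full set.} By \Cref{cor:decomp-mars}, $\mars_m=\mars_m^+\cup(\mars_m^+)^*\cup(-m,1)$. Taking unions over $m$ and using the monotonicity of \Cref{thm:monotonicity-mars}, we get $\mars=\mars^+\cup(\mars^+)^*\cup(-\infty,1)$. Conjugation replaces $\yimp$ by $-\yimp$, and $\yimp\cot\yimp$ is even, which gives the stated formula for $\mars$.

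\textbf{Semi-analytic, not semi-algebraic.} This was deferred from \Cref{prop:simple-properties}. Semi-analyticity is clear, since the description uses the analytic inequality $\yimp\sin\yimp-\xrep\cos\yimp>0$ together with the polynomial bounds $0<\yimp<\pi$, the conjugate piece, and the real segment. For non-semi-algebraicity, I use that $\partial\mars$ contains the graph $\xrep=\yimp\cot\yimp$ for $\yimp\in(0,\pi)$, which is an analytic arc. If $\mars$ were semi-algebraic, its boundary would be too, and this arc would lie in a real algebraic curve. Then $\yimp\cot\yimp$ would be an algebraic function. That is impossible, because $\yimp\cot\yimp$ has infinitely many poles at $\yimp=k\pi$ after analytic continuation, while an algebraic function has finitely many.

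\textbf{Main obstacle.} I expect this last point to be the main obstacle. Justifying that the arc lies in an algebraic curve and then extends analytically requires some care, for instance via the identity theorem applied to $P(\yimp\cot\yimp,\yimp)=0$ continued to all $\yimp\notin\pi\mathbb{Z}$.
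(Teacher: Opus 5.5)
Your proposal is correct and follows essentially the paper's route: compute $\lim_{m\to\infty}F(\xrep,\yimp,m)=\yimp$ (the paper uses L'H\^opital's rule), combine this with the monotonicity in $m$, \Cref{cor:decomp-mars} and conjugation symmetry, then translate $\yimp<\arccot(\xrep/\yimp)$ into the $\cot$ and $e^a\overline{a}$ forms; your use of \Cref{lem:derivative-m} to get the strict inequality is actually tidier than the paper's intermediate limit condition, which is written with $\leq$. One small slip in your extra semi-analyticity remark, which is not part of this statement: for $\yimp\in(0,\pi)$ the condition $\xrep<\yimp\cot\yimp$ is equivalent to $\yimp\cos\yimp-\xrep\sin\yimp>0$, not to $\yimp\sin\yimp-\xrep\cos\yimp>0$.
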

\begin{proof}
  \Cref{cor:decomp-mars} implies that $\mars \cap \bbR = (-\infty,1)$.
  Now by \Cref{thm:monotonicity-mars} and \Cref{thm:explicit-formulas-for-mars}
  we see that 
  \[\mars^+ = \left\{ \xrep+\iu\yimp{} \in \bbC \setsep \yimp>0,\;
    \lim_{m \to \infty} (m+1)\arccot\left(\frac{\xrep+m}{\yimp}\right) \leq \arccot \frac{\xrep}{\yimp}\right\}.\]
   L'Hôpital's rule gives for $\yimp>0$
    \begin{align*}
      \lim_{m \to \infty} (m+1)\arccot\left(\frac{\xrep+m}{\yimp}\right)
      %&= \lim_{m \to \infty} \frac{\frac{1}{\yimp
      %  \left(\frac{(m+\xrep)^2}{\yimp^2}+1\right)}}{\frac{1}{(m+1)^2}}\\
      &= \lim_{m \to \infty} \frac{(m+1)^2}{\yimp
        \left(\frac{(m+\xrep)^2}{\yimp^2}+1\right)} \\
      &=\yimp.
    \end{align*}
    Therefore
    \begin{align*}
      \mars^+ &= \left\{ \xrep+\iu\yimp{} \in \bbC \setsep \yimp>0,\; \yimp <
      \arccot\left(\frac{\xrep}{\yimp}\right)\right\} \\
              &=\left\{ \xrep+\iu\yimp{} \in \bbC \setsep \yimp \in (0,\pi),\; \xrep<\yimp\cot(\yimp)\right\}.
    \end{align*}
    Finally, for $a=\xrep+\iu\yimp{}$ we obtain
    \begin{align*}
      \imag(e^a\overline{a})&=\imag (e^\xrep(\cos \yimp + \iu \sin \yimp)(\xrep-\iu \yimp)) \\
                            &= e^\xrep(-\yimp\cos(\yimp)+\xrep \sin(\yimp)), 
    \end{align*}
    which is negative if and only if $\xrep< \yimp\cot(\yimp)$ for $\yimp \in (0,\pi)$.
  \end{proof}

\section{Convexity of MARS}
\label{sec:convexity}

Our goal in this section is to show that $\mars_m$ is convex for every
$m \in \bbN$.
We start by deriving a representation of $m+\mars^+_m$ in polar
coordinates.
\begin{prop}[MARS in polar coordinates]
\label{prop:mars-in-polar}
  For every $m \in \bbN$ we have
  \begin{align*}
    \mars_m^+ = \left\{-m + r e^{\iu
    \alpha} \setsep \alpha \in \left(0,\frac{\pi}{m+1}\right),\; r \in \left(0,m \frac{\sin((m+1)\alpha)}{\sin(m\alpha)}\right)\right\}.
  \end{align*}
\end{prop}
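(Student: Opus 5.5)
The plan is to rewrite the description of $\mars_m^+$ from \Cref{thm:explicit-formulas-for-mars} in polar coordinates centred at $-m$. Every $a=\xrep+\iu\yimp$ with $\yimp>0$ can be written uniquely as $a=-m+re^{\iu\alpha}$ with $r>0$ and $\alpha\in(0,\pi)$. With the branch of $\arccot$ fixed in the paper, this gives $\arccot\bigl(\frac{\xrep+m}{\yimp}\bigr)=\alpha$ and $\arccot\bigl(\frac{\xrep}{\yimp}\bigr)=\arg a\in(0,\pi)$. The defining inequality of $\mars_m^+$ therefore becomes
\begin{align*}
(m+1)\alpha<\arg a .
\end{align*}
Since $\arg a<\pi$, this already forces $\alpha<\frac{\pi}{m+1}$. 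This is the angular restriction in the claimed set, and it matches \Cref{lem:quarter-plane}.

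Next I rewrite the inequality $(m+1)\alpha<\arg a$ as a sign condition that no longer involves arguments. Assume $\alpha\in(0,\frac{\pi}{m+1})$. Then both $\arg a$ and $(m+1)\alpha$ lie in $(0,\pi)$, so their difference $\arg a-(m+1)\alpha$ lies in $(-\pi,\pi)$. On that interval the difference is positive exactly when its sine is positive. Hence the condition is equivalent to
\begin{align*}
\imag\bigl(a\,e^{-\iu(m+1)\alpha}\bigr)>0 .
\end{align*}
Substituting $a=-m+re^{\iu\alpha}$ gives
\begin{align*}
\imag\bigl(-m e^{-\iu(m+1)\alpha}+r e^{-\iu m\alpha}\bigr)=m\sin((m+1)\alpha)-r\sin(m\alpha).
\end{align*}
Because $m\alpha\in(0,\pi)$, we have $\sin(m\alpha)>0$. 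So the condition reads
\begin{align*}
r<m\frac{\sin((m+1)\alpha)}{\sin(m\alpha)} .
\end{align*}
This bound is positive, since $(m+1)\alpha\in(0,\pi)$, so the interval of admissible $r$ is nonempty.

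Both inclusions then follow. For ``$\subseteq$'': a point of $\mars_m^+$ satisfies $(m+1)\alpha<\arg a<\pi$, which gives the angular bound, and then the sine computation gives the radial bound. For ``$\supseteq$'': a point $-m+re^{\iu\alpha}$ with $\alpha$ and $r$ in the stated ranges has $\yimp=r\sin\alpha>0$. Running the equivalences backwards gives $(m+1)\alpha<\arg a$, that is $F(\xrep,\yimp,m)<F(\xrep,\yimp,0)$, and \eqref{eq:thm74recap} then places the point in $\mars_m^+$.

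The only delicate step is the branch bookkeeping. One must check that $\arccot(\xrep/\yimp)$ really equals $\arg a$ in $(0,\pi)$ for the paper's continuous decreasing branch. One must also check that the angular restriction $\alpha<\frac{\pi}{m+1}$ is established before the step ``difference positive iff its sine is positive'' is used; otherwise the sine test would be invalid. The remaining computations are routine.
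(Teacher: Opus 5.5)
Your proof is correct and follows the paper's route: you parametrize $a+m$ in polar coordinates, identify $\alpha=\arccot\bigl(\frac{\xrep+m}{\yimp}\bigr)$, read off $\alpha<\frac{\pi}{m+1}$, and then turn $(m+1)\alpha<\arccot\bigl(\frac{\xrep}{\yimp}\bigr)$ into the radial bound. The only difference is in that last step: the paper takes cotangents and uses the identity $\cot(\alpha)-\cot((m+1)\alpha)=\frac{\sin(m\alpha)}{\sin(\alpha)\sin((m+1)\alpha)}$ of \Cref{lem:cotm-cotm}, whereas you take the sine of the angle difference and compute $\imag\bigl(a e^{-\iu(m+1)\alpha}\bigr)=m\sin((m+1)\alpha)-r\sin(m\alpha)$ directly, which avoids that lemma.
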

\begin{proof}
    Let $\xrep + \iu \yimp \in \mars_m^+$.
  Set $\alpha: = \arccot(\frac{\xrep + m}{\yimp}) <
  \frac{1}{m+1}\arccot(\frac{\xrep}{\yimp})$, which implies
  $\alpha \in (0,\frac{\pi}{m+1})$.
  By \Cref{lem:cotm-cotm} we have
  \begin{align*}
    \frac{m}{\yimp} = \frac{\xrep+m}{\yimp} - \frac{\xrep}{\yimp} > \cot(\alpha) - \cot((m+1)\alpha)
    &= \frac{\sin(m \alpha)}{\sin(\alpha) \sin((m+1)\alpha)}
  \end{align*}
  Hence
  \begin{align*}
    (\xrep+m)^2+\yimp^2 = \yimp^2(1+\cot^2(\alpha)) < m^2 \frac{\sin^2((m+1)\alpha)}{\sin^2(m\alpha)}.
  \end{align*}
  This shows
  $\xrep+m+\iu \yimp \in \{ re^{\iu \alpha} \setsep \alpha \in
  (0,\frac{\pi}{m+1}),\; r < m
  \frac{\sin((m+1)\alpha)}{\sin(m\alpha)}\}$.

  On the other hand, let $\alpha \in (0,\frac{\pi}{m+1})$, $r \in(0,m\frac{\sin((m+1)\alpha)}{\sin(m\alpha)})$.
  Set
  \begin{align*}
    \xrep &:= -m+r\cos(\alpha),\\
    \yimp &:=r\sin(\alpha).
  \end{align*}
  Again \Cref{lem:cotm-cotm} implies
  \begin{align*}
    \frac{\xrep}{\yimp} &= \cot(\alpha)-\frac{m}{r\sin(\alpha)}
    &<\cot(\alpha)-(\cot(\alpha)-\cot((m+1)\alpha)=\cot((m+1)\alpha)),
  \end{align*}
  and hence
  \begin{align*}
    \arccot\left(\frac{\xrep+m}{\yimp}\right) &= \alpha < \frac1{m+1} \arccot\left(\frac{\xrep}{\yimp}\right).
  \end{align*}
  In other words, $-m+re^{\iu \alpha} = \xrep+\iu \yimp \in \mars_m^+$.
\end{proof}

\begin{thm}[MARS is convex]
  \label{thm:mars-is-convex}
  For every $m \in \bbN$, the sets $\mars_m$ are convex. 
  The increasing union $\mars=\bigcup_{m \in \bbN} \mars_m$
  is therefore also convex.
\end{thm}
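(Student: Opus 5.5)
The plan is to use the polar description of \Cref{prop:mars-in-polar}. After translating by $m$, the upper half $m+\mars_m^+$ is the region $\{re^{\iu\alpha} \setsep 0<\alpha<\frac{\pi}{m+1},\ 0<r<\rho(\alpha)\}$ with
\[
\rho(\alpha):=m\frac{\sin((m+1)\alpha)}{\sin(m\alpha)},
\]
which is star-shaped with respect to the origin, i.e.\ the point $-m$. By \Cref{cor:decomp-mars} the whole set $\mars_m$ is this region together with its mirror image and the segment $(-m,1)$. Hence $\mars_m$ is the region bounded by the closed curve $\alpha\mapsto -m+\rho(\abs{\alpha})e^{\iu\alpha}$, $\alpha\in[-\frac{\pi}{m+1},\frac{\pi}{m+1}]$. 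Here $\rho(0)=m+1$ corresponds to the point $1$, and $\rho(\pm\frac{\pi}{m+1})=0$ corresponds to the point $-m$. Since $\rho$ is even and analytic near $0$, the curve is smooth at $\alpha=0$. The case $m=1$ gives the unit disk and is trivial, so we take $m\ge 2$.

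For a region $\{re^{\iu\alpha}\setsep r<\rho(\alpha)\}$ with $\rho>0$ smooth on an open angular interval of length at most $\pi$, convexity is equivalent to nonnegative curvature of the boundary. The standard polar criterion for this is
\[
\rho^2+2\rho'^2-\rho\rho''\ge 0,
\]
or equivalently, with $u:=1/\rho$, the condition $u''+u\ge 0$. The second form is attractive here because
\[
u(\alpha)=\frac{\sin(m\alpha)}{m\sin((m+1)\alpha)}.
\]
Writing $\beta=(m+1)\alpha\in(0,\pi)$ and $\sin(m\alpha)=\sin(\beta-\alpha)=\sin\beta\cos\alpha-\cos\beta\sin\alpha$, we get
\[
m\,u(\alpha)=\cos\alpha-\sin\alpha\cot((m+1)\alpha).
\]
Then $\cos\alpha$ satisfies $(\cos)''+\cos=0$, so it contributes nothing. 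This reduces the curvature condition to
\[
v''+v\le 0\quad\text{for}\quad v(\alpha):=\sin\alpha\,\cot((m+1)\alpha),\qquad \alpha\in\left(0,\tfrac{\pi}{m+1}\right).
\]
A direct computation with $c=\cot((m+1)\alpha)$ and $c'=-(m+1)(1+c^2)$ gives
\[
v''+v=2\cos\alpha\,c'+\sin\alpha\,c''.
\]
With $s=\sin((m+1)\alpha)$, this becomes
\[
v''+v=-\frac{2(m+1)}{s^2}\Bigl(\cos\alpha-(m+1)\sin\alpha\cot((m+1)\alpha)\Bigr).
\]
So it suffices to show
\[
\tan((m+1)\alpha)\ \text{ and } \ (m+1)\tan\alpha \ \text{ compare correctly, i.e. }\ \cos\alpha\,\sin\beta\ge (m+1)\sin\alpha\cos\beta .
\]
This is the inequality $\sin(\beta)\cos\alpha-(m+1)\sin\alpha\cos\beta\ge0$ for $\beta=(m+1)\alpha\in(0,\pi)$. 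For $\beta\ge\pi/2$ it is trivial, since the left term is nonnegative and the right term is nonpositive. For $\beta<\pi/2$ it reads $\tan((m+1)\alpha)\ge(m+1)\tan\alpha$, which follows from superadditivity of $\tan$ on $[0,\pi/2)$, itself a consequence of convexity with $\tan 0=0$. The signs must be checked carefully in the actual writeup. If instead the estimates of \Cref{sec:various-estimates}, such as \Cref{lem:cotm-cotm}, already encode this, I would cite them.

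It then remains to glue the pieces together. Local convexity of the smooth boundary arc in the upper half-plane, together with its reflection and smoothness at $1$, gives a closed curve of nonnegative curvature with total turning $2\pi$. The one delicate point is the corner at $-m$, where both arcs meet the vertex with tangent angles $\pm\frac{\pi}{m+1}$. This is an interior angle $\frac{2\pi}{m+1}<\pi$, so it is a convex corner. A simple closed curve that is locally convex everywhere, including the corner, bounds a convex set; star-shapedness makes the argument easy by considering supporting lines through boundary points. I expect the main obstacle to be making this global step rigorous. An alternative is to show directly that $\{re^{\iu\alpha}\setsep r<\rho(\alpha)\}$ is an intersection of half-planes, namely the polar dual of the convex condition $u''+u\ge0$, using the classical fact that $1/\rho$ being ``$\sin$-convex'' is equivalent to convexity of a region star-shaped around the origin.

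Finally, $\mars$ is an increasing union of convex sets by \Cref{thm:monotonicity-mars}, hence convex.
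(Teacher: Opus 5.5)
Your proposal is correct, and its analytic core is the paper's own argument. The paper also starts from \Cref{prop:mars-in-polar} and uses the criterion $u+u''>0$ for $u=1/r$. It rewrites $\frac{\sin(m\alpha)}{\sin((m+1)\alpha)}=\cos\alpha-\sin\alpha\cot((m+1)\alpha)$ and arrives at exactly your expression $\frac{2(m+1)\sin\alpha}{\sin^2((m+1)\alpha)}\bigl(\cot\alpha-(m+1)\cot((m+1)\alpha)\bigr)$. It concludes with convexity of $\tan$ when $(m+1)\alpha<\frac{\pi}{2}$ and the trivial sign case otherwise (\Cref{lem:curvature-polar}). Your signs are right, and the inequality is in fact strict.

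The real difference is the global step you flag as the main obstacle, which the paper sidesteps. By \Cref{lem:upper-boundary-g} and \Cref{cor:decomp-mars}, $\mars_m=\{\xi+\iu\eta \setsep -m<\xi<1,\ \abs{\eta}<g_m(\xi)\}$, so it suffices that $g_m$ be concave. The upper arc is a regular analytic curve and also the graph of the $C^1$ function $g_m$. Hence its real part has nonvanishing derivative, $g_m$ is $C^2$, and positive signed curvature when traversed from $1$ to $-m$ is exactly $g_m''<0$. The corner at $-m$ and the junction at $1$ then need no separate treatment, and no local-to-global convexity theorem for closed curves with corners is required.

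If you prefer to stay in the star-shaped picture, your second alternative is complete once written out. Chords along a single ray from $-m$ are handled by star-shapedness. Any other chord has polar equation $1/r=A\cos\alpha+B\sin\alpha$ about $-m$, over an angular interval of length $<\pi$. Then $w:=A\cos\alpha+B\sin\alpha-u$ satisfies $w''+w=-(u+u'')\leq 0$, with $w>0$ at the endpoints. Write $w=\phi h$ with $\phi=\cos(\alpha-c)>0$ on that interval. Then $(\phi^2h')'=\phi\,(w''+w)\leq 0$, so $h$ is nondecreasing and then nonincreasing. Its minimum is therefore attained at an endpoint, so $h>0$ and hence $w>0$ along the whole chord. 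Since $u$ is even and analytic on $(-\frac{\pi}{m+1},\frac{\pi}{m+1})$, evenness and continuity give $u+u''\geq 0$ on the whole interval. This handles both half-planes and the real segment at once, without the graph description.
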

\begin{proof}
    It is enough to show that the scaled and translated sets
    $\frac{1}{m}(m+\mars_m)$ are convex.
    By \Cref{lem:upper-boundary-g} we have
    to show that the curve bounding
    $\frac{1}{m}(m+\mars^+_m)$ from above
    has positive signed curvature when 
    traversed from $\frac{m+1}{m}$ to $0$.
    By \Cref{prop:mars-in-polar} this
    curve is given in polar coordinates
    by $\alpha \mapsto r_m(\alpha)e^{\iu\alpha},
    \alpha \in (0,\frac{\pi}{m+1})$
    with $r_m(\alpha) = \frac{\sin((m+1)\alpha)}{\sin(m\alpha)}$.
    If we set $\ufunc_m:=\frac{1}{r_m}$, then this
    curve has positive signed curvature if and only if $\ufunc_m+\ufunc_m''>0$,
    see e.g. \cite[p. 251]{mcmahonElementsDifferentialCalculus1898}.
    This inequality is established in \Cref{lem:curvature-polar}.
\end{proof}

\begin{rem}
    \label{rem:geometric-interpretation}
    There is a nice elementary geometric interpretation
    of the characterization of $\mars_m^+, m \in \bbN$ in \Cref{prop:mars-in-polar}.
    Namely, define points $A:=-m$, $B:=1$
    and $D:=0$ in $\bbC$.
    Then the curve bounding $\mars_m^+$ from above
    consists precisely of those points $C$ for which
    $0<\angle  CDB = (m+1) \angle CAB < \pi$. This
    follows from a simple application
    of the law of sines to the triangle ADC, see
    \Cref{fig:sine-triangle}.
\end{rem}

\begin{figure}
  \begin{center}
\begin{tikzpicture}[scale=0.9]

\coordinate (C) at (1,3);
\coordinate (A) at (-4,0);
\coordinate (B) at (7,0);

\def\m{2}
\coordinate (D) at ($(B)!{1/(1+\m)}!(A)$);

\draw[thick] (A)--(B)--(C)--cycle;
\draw[thick] (C)--(D);

\fill (A) circle (1.2pt);
\fill (B) circle (1.2pt);
\fill (C) circle (1.2pt);
\fill (D) circle (1.2pt);

\node[left] at (A) {$A$};
\node[right] at (B) {$B$};
\node[above] at (C) {$C$};
\node[below] at (D) {$D$};

\node[below] at (0,0) {$m$};
\node[left,xshift=-0.2cm] at ($(C)!{1/(1+\m)}!(A)$) {$m\frac{\sin((m+1)\alpha)}{\sin(m\alpha)}$};
\pic[
    draw,
    "$\alpha$",
    angle radius=6mm,
    angle eccentricity=1.4
] {angle=B--A--C};

\pic[
    draw,
    "$(m+1)\alpha$",
    angle radius=5mm,
    angle eccentricity=1.8
] {angle=B--D--C};

\pic[
    draw,
    "$m\alpha$",
    angle radius=6mm,
    angle eccentricity=1.4
] {angle=A--C--D};

\end{tikzpicture}
\end{center}
\caption{The triangle from \Cref{rem:geometric-interpretation}.}
\label{fig:sine-triangle}
\end{figure}

\section{Moving-Average Iteration in Banach Spaces}
\label{sec:Banach-spaces}

We conclude our paper by extending \Cref{thm:mas-via-mars} to
an infinite-dimensional setting.
Let $A$ be a bounded linear operator on a Banach space $X$.
Then $\calM_m(A)$ defined as in 
\eqref{eq:def-ma}
is a bounded linear operator on
$X^m$. The main result of this section
shows that containment of the spectrum of $A$ in $\mars_m$ characterizes uniform exponential stability of $\calM_m(A)$, which is equivalent to
$\lim_{n \to \infty} \norm{\calM_m(A)^n} = 0$ 
as shown for example in \cite[Chapter II, Proposition 1.3]{eisnerStabilityOperatorsOperator2010}

\begin{thm}[The MARS of operators in Banach space]
  \label{thm:mars-banach-space}
  Let $m \in \bbN$. Then $\lim_{n \to \infty} \norm{\calM_m(A)^n} =0$
  if and only if the spectrum $\sigma(A)$ is contained in $\mars_m$.
\end{thm}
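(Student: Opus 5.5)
By \cite[Chapter II, Proposition 1.3]{eisnerStabilityOperatorsOperator2010}, $\lim_{n\to\infty}\norm{\calM_m(A)^n}=0$ is equivalent to $\rho(\calM_m(A))<1$, i.e.\ $\sigma(\calM_m(A))\subseteq B_1(0)$, where $\sigma(\calM_m(A))$ is compact. The statement therefore reduces to proving the Banach-space analogue of the spectral identity from the proof of \Cref{thm:mas-via-mars},
\begin{align*}
  \sigma(\calM_m(A)) = \bigcup_{a\in\sigma(A)} \sigma(\calM_m(a)) = \{\lambda\in\bbC \setsep \exists a\in\sigma(A):\ p_{m,a}(\lambda)=0\}.
\end{align*}
Given this identity, the equivalence follows from the definition of $\mars_m$: the left side lies in $B_1(0)$ if and only if every $a\in\sigma(A)$ satisfies $\sigma(\calM_m(a))\subseteq B_1(0)$.

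Eigenvectors alone will not suffice in infinite dimensions, since approximate point and residual spectrum must also be handled. I will instead solve $(\lambda I-\calM_m(A))x=y$ directly for $x=(x_0,\dots,x_{m-1})\in X^m$. The first $m-1$ block rows read $\lambda x_j - x_{j+1}=y_j$. These give $x_{j}=\lambda^{j}x_0-\sum_{i<j}\lambda^{j-1-i}y_i$, so every $x_j$ is determined by $x_0$ plus a bounded expression in $y$. Substituting into the last row $\lambda x_{m-1}-\frac1m A\sum_j x_j=y_{m-1}$ produces an equation
\begin{align*}
  T(\lambda)\,x_0 = \tilde y, \qquad T(\lambda):=\lambda^m I-\frac{1}{m}\Bigl(\sum_{\ell=0}^{m-1}\lambda^\ell\Bigr) A,
\end{align*}
where $\tilde y$ is a bounded linear expression in $y$ and every $y$-dependent term is polynomial in $\lambda$ and $A$. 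This yields two conclusions. If $T(\lambda)$ is invertible, then $\lambda I-\calM_m(A)$ is bijective and its inverse is given explicitly, hence is bounded. Conversely, if $\lambda I-\calM_m(A)$ is invertible, then $T(\lambda)$ is invertible: for surjectivity, choose $y=(0,\dots,0,z)$; for injectivity, note that a kernel vector $x_0$ of $T(\lambda)$ produces the kernel vector $(\lambda^j x_0)_j$ of $\lambda I-\calM_m(A)$. Hence $\lambda\in\sigma(\calM_m(A))$ if and only if $0\in\sigma(T(\lambda))$. This Schur-complement step is where I expect most of the bookkeeping to lie. The main point to check carefully is that the map $y\mapsto\tilde y$ is onto $X$ and that no additional invertibility condition is hidden in it; the explicit triangular structure makes this straightforward.

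The final step uses the spectral mapping theorem for polynomials in the single operator $A$, which holds for bounded operators on Banach spaces. For fixed $\lambda$, $T(\lambda)=P_\lambda(A)$ with $P_\lambda(a)=\lambda^m-\frac{a}{m}\sum_{\ell}\lambda^\ell$, so $\sigma(T(\lambda))=P_\lambda(\sigma(A))$. Thus $0\in\sigma(T(\lambda))$ if and only if $p_{m,a}(\lambda)=0$ for some $a\in\sigma(A)$. If $\sum_\ell\lambda^\ell=0$, then $P_\lambda$ is a nonzero constant, both sides are false, and the argument remains consistent. This proves the displayed identity and hence the theorem.
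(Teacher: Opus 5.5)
Your proof is correct, and it takes a different route from the paper.

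\textbf{The paper's route.} The paper never proves the full identity $\sigma(\calM_m(A))=\bigcup_{a\in\sigma(A)}\sigma(\calM_m(a))$. Instead it proceeds in three steps.
\begin{enumerate}[(1)]
\item It reduces $\sigma(A)\subseteq\mars_m$ to $\partial\sigma(A)\subseteq\mars_m$. This uses the fact that every vertical line meets $\mars_m$ in an interval, which comes from the description of $\mars_m^+$ via $g_m$ in \Cref{lem:upper-boundary-g}.
\item It uses that boundary points of the spectrum are approximate eigenvalues. It then transfers approximate eigenvectors between $A$ and $\calM_m(A)$ in both directions (\Cref{lem:connection-spectra}).
\item For the converse direction, it uses that a spectral value of $\calM_m(A)$ of maximal modulus is an approximate eigenvalue.
\end{enumerate}

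\textbf{Your route.} Your block elimination reduces invertibility of $\lambda I-\calM_m(A)$ exactly to invertibility of $T(\lambda)$. Since $T(\lambda)$ is an affine function of $A$, you only need the trivial affine case of the spectral mapping theorem.

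\textbf{What each approach buys.} Your argument gives the exact Banach-space analogue of the spectral identity in the proof of \Cref{thm:mas-via-mars}. It covers the whole spectrum, including residual spectrum, not just its approximate point part. It is also entirely independent of the geometry of $\mars_m$, so it would work for any region defined pointwise through $\sigma(\calM_m(a))$. The paper's argument, by contrast, leans on the structural results of \Cref{sec:MARSstructure}. In return, the paper obtains slightly finer information: an explicit correspondence between approximate eigenvectors of $A$ and of $\calM_m(A)$.

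\textbf{One implicit detail.} You did not say why the bijective operator $T(\lambda)$ has a bounded inverse. This follows from the bounded inverse theorem. It also follows directly from $T(\lambda)^{-1}z=\bigl[(\lambda I-\calM_m(A))^{-1}(0,\dots,0,z)\bigr]_0$, which holds because $\tilde y=z$ for $y=(0,\dots,0,z)$.
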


The key result connecting the spectra of $A$ and $\calM_m(A)$
is the following

\begin{lem}
  \label{lem:connection-spectra}
  \begin{enumerate}[(a)]
  \item If $(v^{n})_{n \in \bbN}$ is an approximate eigenvector of $A$
    for the approximate eigenvalue $a$ and $\lambda \in \sigma(\calM_m(a))$, then
    the sequence $(\tilde{v}^{n})_{n \in \bbN}$ defined by
    $\tilde{v}^{n}:=(1,\lambda,\dots,\lambda^{m-1}) \otimes v^{n}$ is an
    approximate eigenvector of $\calM_m(A)$ for the approximate
    eigenvalue $\lambda$.
  \item If $(\tilde{v}^n)_{n \in \bbN}$ with
      $\tilde{v}^{n}=(\tilde{v}_1^{n},\dots,\tilde{v}_m^{n}) \in X^m$
    is an approximate eigenvector of $\calM_m(A)$ for the approximate
    eigenvalue $\lambda$, then $(\tilde{v}_1^{n})_{n \in \bbN}$ is an
    approximate eigenvector of $A$ for an approximate eigenvalue $a$
    with $\lambda \in \sigma(\calM_m(a))$.
  \end{enumerate}
\end{lem}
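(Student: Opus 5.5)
Both parts come down to computing $(\calM_m(A)-\lambda I)$ applied to vectors with the structure $(1,\lambda,\dots,\lambda^{m-1})\otimes v$, using the shift structure of \eqref{eq:def-ma}. We normalize approximate eigenvectors to $\norm{v^n}=1$, resp.\ $\norm{\tilde v^n}=1$. On $X^m$ we use the norm $\max_i\norm{x_i}$; all product norms are equivalent, so the choice does not matter.

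For (a), let $\tilde v^n=(1,\lambda,\dots,\lambda^{m-1})\otimes v^n$. The first $m-1$ block rows of $(\calM_m(A)-\lambda I)\tilde v^n$ are $\lambda^{i}v^n-\lambda\cdot\lambda^{i-1}v^n=0$. The last block row is
\[\frac1m\sum_{\ell=0}^{m-1}\lambda^\ell Av^n-\lambda^m v^n=\frac1m\sum_{\ell=0}^{m-1}\lambda^\ell (A-a)v^n+p_{m,a}(\lambda)v^n,\]
where we used $p_{m,a}(s)=s^m-\frac{a}{m}\sum_{\ell}s^\ell$ from \eqref{eq:pma}. Since $\lambda\in\sigma(\calM_m(a))$ means $p_{m,a}(\lambda)=0$, the right-hand side is bounded by $C\norm{(A-a)v^n}\to0$. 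Moreover $\norm{\tilde v^n}\ge\norm{v^n}=1$, so after normalizing, $\tilde v^n$ is an approximate eigenvector of $\calM_m(A)$ for $\lambda$.

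For (b), write $w^n:=(\calM_m(A)-\lambda I)\tilde v^n\to0$. The first $m-1$ block rows give $\tilde v^n_{i+1}=\lambda\tilde v^n_i+w^n_i$. Inductively, $\tilde v^n_i-\lambda^{i-1}\tilde v^n_1\to0$ for $i=1,\dots,m$. Substituting this into the last row gives
\[\Bigl(\tfrac1m\textstyle\sum_{\ell=0}^{m-1}\lambda^\ell\Bigr)A\tilde v_1^n-\lambda^m\tilde v_1^n\to0 .\]
Next we need $\tilde v^n_1$ to stay away from $0$. If $\norm{\tilde v^n_1}\to0$ along a subsequence, then all $\tilde v_i^n\to0$ along it, contradicting $\norm{\tilde v^n}=1$. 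So $\liminf\norm{\tilde v_1^n}>0$, and we may normalize $\tilde v_1^n$.

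Now set $c:=\frac1m\sum_\ell\lambda^\ell$. If $c\neq0$, the display says $(A-a)\tilde v_1^n\to0$ with $a:=\lambda^m/c$, and then $p_{m,a}(\lambda)=\lambda^m-ac=0$, so $\lambda\in\sigma(\calM_m(a))$. If $c=0$, the display gives $\lambda^m\tilde v_1^n\to0$, hence $\lambda=0$ (using the lower bound on $\tilde v_1^n$). But $c=0$ at $\lambda=0$ is impossible, since $c(0)=\frac1m$. So this case cannot occur.

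The only point needing care is this degenerate case $c=0$ together with the non-vanishing of $\tilde v_1^n$. Both are handled by the induction on the shift rows, which is the main (mild) obstacle. Note that in (b) the approximate eigenvalue $a$ is explicitly $\lambda^m/c$. This is consistent with solving $p_{m,a}(\lambda)=0$ for $a$, as in the boundary locus remark.
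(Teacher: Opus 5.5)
Your proof is correct and follows the paper's argument: the same block-row computation in (a), and in (b) the same back-substitution through the shift rows to reach $\bigl(\tfrac1m\sum_{\ell}\lambda^\ell\bigr)A\tilde v_1^n-\lambda^m\tilde v_1^n\to0$, after which the paper invokes spectral mapping for the approximate point spectrum where you instead divide explicitly by $c\neq0$. You also make explicit two points the paper leaves implicit, namely that $\norm{\tilde v_1^n}$ stays bounded away from $0$ and that $c=0$ cannot occur; the only blemish is a harmless sign slip in (a), where the term should read $-p_{m,a}(\lambda)v^n$ (it vanishes anyway).
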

\begin{proof}
  \begin{enumerate}[(a)]
  \item Let $(v^{n})_{n \in \bbN}$ be an approximate eigenvector of
    $A$ for the approximate eigenvalue $a$.
    Let $\lambda \in \sigma(\calM_m(a))$, in other words
    $p_{m,a}(\lambda)=0$ and therefore
    $\lambda^{m} = \frac{a}{m} \sum_{\ell=0}^{m-1}
    \lambda^\ell$.
    There is a sequence of vectors $w^{n} \in X$ converging to
    $0$ such that
    $Av^{n}=av^n+w^{n}$.
    If we define $\tilde{v}^n := (1,\lambda,\dots,\lambda^{m-1}) \otimes v^{n}$, $n\in\bbN$, then
    \begin{align*}
      \calM_m(A) \tilde{v}^{n} &= \mat{\lambda v^n \\ \vdots \\
      \lambda^{m-1}v^n \\ \sum_{\ell=0}^{m-1}
      \lambda^\ell(\frac{a}{m}v^n+w^n) } 
      =\lambda \tilde{v}^n + \mat{0\\\vdots\\0\\ \sum_{\ell=0}^{m-1}\lambda^\ell w^n} \to
      \lambda \tilde{v}^n.
    \end{align*}
    Therefore $(\tilde{v}^{n})_{n \in \bbN}$ is an approximate
    eigenvector of
    $\calM_m(A)$ for the approximate eigenvalue $\lambda$.
  \item Let $(\tilde{v}^{n})_{n \in \bbN}$ be an approximate eigenvector of
    $\calM_m(A)$ for the approximate eigenvalue $\lambda$. There are
    vectors $w^{n}\in X^m$ such that
    $\calM_m(A) \tilde{v}^n = \lambda \tilde{v}^n+w^{n}$.  The structure of $\calM_m(A)$ implies
    \begin{align*}
      \tilde{v}_2^{n} &= \lambda \tilde{v}_1^{n} +w_1^{n}\\
      \tilde{v}_3^{n} &= \lambda^2 \tilde{v}_1^{n}+\lambda w_1^{n}+w_2^{n} \\
      &\vdots \\
      \tilde{v}_m^{n} &= \lambda^{m-1} \tilde{v}_1^{n} + \sum_{\ell=1}^{m-1}
                  \lambda^{m-1-\ell} w_\ell^{n}\\
      \sum_{\ell=0}^{m-1} A \frac{\lambda^\ell}{m} \tilde{v}_1^{n} +
      \tilde{w}^{n} &= \lambda \tilde{v}_m^{n} + w_m^{n} 
    \end{align*}
    with $\tilde{w}^{n} \to 0$ for $n \to \infty$.
    Therefore
    \begin{align*}
      \sum_{\ell=0}^{m-1} \frac{\lambda^\ell}{m} A \tilde{v}_1^{n} -\lambda^m
      \tilde{v}_1^{n} \to 0.
    \end{align*}
    Hence $0 \in \sigma_{\text{approx}}(\sum_{\ell=0}^{m-1} A \frac{\lambda^\ell}{m} -
    \lambda^m)$, where $\sigma_{\text{approx}}$ denotes the
    approximate point spectrum,
    and $\lambda^m \in (\sum_{\ell=0}^{m-1}
    \frac{\lambda^\ell}{m}) \sigma_{\text{approx}}(A)$.
    Thus there is $a \in \sigma_{\text{approx}}(A)$
    with $\lambda^m = (\sum_{\ell=0}^{m-1}
    \frac{\lambda^\ell}{m})a$. This is equivalent to $\lambda \in \sigma(\calM_m(a))$. \qedhere
  \end{enumerate}
\end{proof}

\begin{proof}[Proof of {\Cref{thm:mars-banach-space}}]
  We first show that $\sigma(A) \subseteq \mars_m$ if and only if
  $\partial \sigma(A) \subseteq \mars_m$.
  Since $\sigma(A)$ is closed, $\partial \sigma(A) \subseteq
  \sigma(A)$,
  so one implication is clear.
  For the other implication assume that
  $\partial \sigma(A) \subseteq \mars_m$
  and that $\xrep{}+\iu \yimp{} \in \sigma(A)$.
  There are $\yimp^-, \yimp^+$ with $\yimp^-\leq \yimp{} \leq \yimp^+$
  and $\xrep{}+\iu\yimp{}^-, \xrep{}+\iu\yimp{}^+ \in \partial \sigma(A)$. 
  By \Cref{thm:monotonicity-mars} and \eqref{eq:Fset-char} the
  intersection of $\mars_m$ with any line parallel to the imaginary
  axis is an interval. Hence $\xrep{}+\iu \yimp{} \in \mars_m$.

  The topological boundary of the spectrum is contained
  in the approximate point spectrum, see e.g.
  \cite[Chapter VII, Proposition 6.7]{conwayCourseFunctionalAnalysis1990}.
  By \cite[Chapter II, Proposition 1.3]{eisnerStabilityOperatorsOperator2010} $\calM_m(A)$
  is uniformly exponentially stable if and only if
  $\rho(\calM_m(A))<1$.

  Assume that $\sigma(A)\not\subseteq \mars_m$.
  By the argument above there is an approximate eigenvalue
  $a \not \in \mars_m$ of $A$. By the definition
  of $\mars_m$ there must be $\lambda \in \sigma(\calM_m(a))$
  with $\abs{\lambda}\geq 1$.
  By \Cref{lem:connection-spectra} (a)
  this implies $\rho(\calM_m(A))\geq 1$
  and therefore $\calM_m(A)$ is not uniformly
  exponentially stable.

  On the other hand, assume that  
  $\rho(\calM_m(A))\geq 1$. There must be
  an approximate eigenvalue $\lambda$
  of $\calM_m(A)$ with absolute value at least $1$.
  By \Cref{lem:connection-spectra} (b)
  there is $a \in \sigma(A)$ with $\lambda \in \sigma(\calM_m(a))$.
  Hence $\sigma(A) \not \subseteq \mars_m$. 
\end{proof}

\section{Conclusion}
\label{sec:conclusion}

In this paper we have introduced the stability regions for linear moving-average iteration and have derived explicit formulas describing these sets. 
We have shown in
  \Cref{prop:simple-properties}
  that the stability of moving-average iteration
of a matrix $A$ is controlled by
the inclusion of the spectrum $\sigma(A)$ in an open
region of the complex plane. 
The same was shown for
Krasnoselskii iteration under reasonable assumptions
on the weight sequence in \Cref{thm:stability-Krasnoselskii}.
However, as \Cref{exam:non-open-stability-region} shows, some
assumptions on the weight sequence 
are indispensable
to obtain a characterization of stability of the iteration scheme
via an open stability region. This naturally raises the following
two questions.
\begin{quest}
  \label{qu:open-stability-set}
  For which Mann-weight sequences is there an open region
  $\calS \subseteq \bbC$ such that the Mann iteration
  of a square matrix $A$ is stable if and only if $\sigma(A) \subseteq
  \calS$?
\end{quest}

\begin{quest}
  \label{qu:regions}
  Which subsets of the complex plane can arise as stability
  sets of Mann iterations? Are these stability regions all convex?
\end{quest}

The classical Mann iteration,
Krasnoselskii iteration and
moving-average iteration
all correspond to
Mann-weight sequences $(\pi^k)_{k \in \bbN_0}$ of the form
\begin{align*}
  \pi^k_\ell = \frac{1}{\sum_{j=0}^k \zeta_j} \zeta_{k-\ell}
\end{align*}
% x_{k+1} = A \sum_{\ell=0}^k \xi_{k-\ell} x_\ell  
for a sequence $(\zeta_j)_{j \in \bbN_0}$ of non-negative real numbers
with $\zeta_0\neq 0$.
This class of Mann iterations might be the natural
class for which one should approach \Cref{qu:open-stability-set}
and \ref{qu:regions}.

Additionally, it would be interesting to derive conditions on a matrix
that imply the containment of its spectrum in $\mars_m$
or $\mars$ e.g. in the spirit of the
Schur-Cohn theorem.
A good starting point for this question might be
\cite{lasserreCharacterizingPolynomialsRoots2004}.

Finally, iteration schemes as discussed in this paper become
very relevant when they are applied to noisy dynamics.
Krasnoselskii iteration is well studied in this setting under the name
\enquote{stochastic approximation}, see e.g.
\cite{borkarStochasticApproximationDynamical2023}
and \cite{kushnerStochasticApproximationRecursive2003}, but
the behavior of moving-average iteration,
in particular in the limit of large window sizes,
seems to be much less explored.

\printbibliography

\appendix

\section{Various Estimates}
\label{sec:various-estimates}

The following lemma can be seen as a generalization of
the inscribed angle theorem, which corresponds to $m=1$,
in which case the inequality in the assertion actually
becomes an equality.
\begin{lem}
  \label{lem:right-angle-split-m}
  Let $m\geq 1$.
  Let $ABC$ be a right-angled triangle with the right angle at $C$.
  Let $D$ be a point on the side $AB$ with $m \abs{BD}=\abs{DA}$.
  Let $\alpha$ be the interior angle of $ABC$ at $A$
  and $\delta$ the interior angle of $CDB$ at $D$.
  Then $\delta\leq (m+1) \alpha$.
\end{lem}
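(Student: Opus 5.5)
The plan is to put the triangle in the same coordinates as in the proof of \Cref{prop:bounding-circle} and reduce the claim to a one-variable trigonometric inequality in $\alpha$. Scale so that $A=-m$, $D=0$, $B=1$; then $\abs{DA}=m\abs{BD}$ holds. By Thales's theorem $C$ lies on the circle with diameter $\overline{AB}$, say $C=\xrep+\iu\yimp$ with $\yimp>0$ (the case $\yimp=0$ is degenerate). Since $\abs{AB}=m+1$ and the angle at $C$ is right, we have $\abs{AC}=(m+1)\cos\alpha$ with $\alpha\in(0,\pi/2)$. Hence
\[
C=-m+(m+1)\cos\alpha\,e^{\iu\alpha},\qquad \delta=\arccot\left(\frac{\xrep}{\yimp}\right)\in(0,\pi).
\]
A direct computation gives
\[
\frac{\xrep}{\yimp}=\frac{(m+1)\cos^2\alpha-m}{(m+1)\cos\alpha\sin\alpha}=\cot\alpha-\frac{m}{(m+1)\sin\alpha\cos\alpha}.
\]

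First I dispose of the trivial case. If $(m+1)\alpha\geq\pi$, the claim is immediate because $\delta<\pi$. So assume $(m+1)\alpha<\pi$. Since $\cot$ is decreasing on $(0,\pi)$, the claim $\delta\le(m+1)\alpha$ is then equivalent to $\cot\delta\geq\cot((m+1)\alpha)$.

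Next I use the identity $\cot\alpha-\cot((m+1)\alpha)=\frac{\sin(m\alpha)}{\sin\alpha\,\sin((m+1)\alpha)}$ (the content of \Cref{lem:cotm-cotm}, or the sine subtraction formula). With it, the inequality becomes
\[
\frac{\sin(m\alpha)}{\sin((m+1)\alpha)}\geq\frac{m}{(m+1)\cos\alpha}.
\]
Because $\sin((m+1)\alpha)>0$ and $\cos\alpha>0$, we may clear denominators to get $(m+1)\sin(m\alpha)\cos\alpha\geq m\sin((m+1)\alpha)$. Expanding $\sin((m+1)\alpha)=\sin(m\alpha)\cos\alpha+\cos(m\alpha)\sin\alpha$, this reduces to
\[
h(\alpha):=\sin(m\alpha)\cos\alpha-m\cos(m\alpha)\sin\alpha\geq 0 .
\]

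The main step is this last inequality, and I would prove it by differentiating. We have $h(0)=0$, and
\[
h'(\alpha)=(m^2-1)\sin(m\alpha)\sin\alpha .
\]
For $\alpha\in(0,\pi/(m+1))$ both $m\alpha<\pi$ and $\alpha<\pi$, so both sines are nonnegative. With $m\geq1$ this gives $h'\geq0$, hence $h\geq0$ on the whole relevant interval. For $m=1$ we get $h\equiv0$, which recovers the equality case of the inscribed angle theorem mentioned before the lemma.

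The only points needing care are the sign bookkeeping when passing between angles and cotangents, which requires the case split $(m+1)\alpha<\pi$, and the positivity of $\sin((m+1)\alpha)$ when clearing denominators. Both are guaranteed on $(0,\pi/(m+1))$.
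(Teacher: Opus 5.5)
Your proof is correct, but it takes a different route from the paper's. The paper argues synthetically. It drops the perpendicular from $D$ to $BC$ with foot $E$ and uses $ED\parallel AC$ to split $\delta=\alpha+\angle EDC$. The intercept theorem then gives $\tan(\delta-\alpha)=m\tan\alpha$. Finally it concludes $\delta-\alpha\le m\alpha$ from the convexity of $\tan$ on $(0,\frac{\pi}{2})$, i.e.\ from $\tan x/x$ being increasing. Both angles involved lie automatically in $(0,\frac{\pi}{2})$, so no case distinction is needed.

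You instead place the triangle in the coordinates $A=-m$, $D=0$, $B=1$ that the paper uses in \Cref{prop:bounding-circle}. You reduce the claim, via the identity $\cot\alpha-\cot((m+1)\alpha)=\frac{\sin(m\alpha)}{\sin\alpha\sin((m+1)\alpha)}$, to $h(\alpha)=\sin(m\alpha)\cos\alpha-m\cos(m\alpha)\sin\alpha\ge 0$. You prove this from $h(0)=0$ and $h'=(m^2-1)\sin(m\alpha)\sin\alpha$. The sign bookkeeping is sound: $\cos\alpha>0$, and $\sin((m+1)\alpha)>0$ and $\sin(m\alpha)>0$ on $(0,\frac{\pi}{m+1})$. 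The split-off case $(m+1)\alpha\ge\pi$ is indeed trivial.

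The underlying analytic fact is the same as the paper's. For $m\alpha<\frac{\pi}{2}$, the inequality $h\ge 0$ is just $\tan(m\alpha)\ge m\tan\alpha$. Your derivative form has some advantages, though. It covers $m\alpha\ge\frac{\pi}{2}$ without dividing by $\cos(m\alpha)$. It exhibits the equality case $m=1$ and strictness for $m>1$ directly. It also works verbatim for real $m\ge 1$ if you use the sine subtraction formula rather than the lemma stated for $m\in\bbN$.

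Moreover, your final inequality $(m+1)\cos\alpha\ge m\frac{\sin((m+1)\alpha)}{\sin(m\alpha)}$ says exactly that, in the polar description of \Cref{prop:mars-in-polar} centered at $-m$, the Thales circle lies outside the boundary curve of $\mars_m^+$. That proposition does not depend on the bounding circle, so your computation could replace the geometric lemma in the proof of \Cref{prop:bounding-circle} altogether. The paper's version is shorter and coordinate-free, at the cost of an auxiliary construction.
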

\begin{proof}
  Let $E$ be the point on the segment $\overline{BC}$ such that
  $DE$ is perpendicular to $BC$, see \Cref{fig:right-triangle}.
  Since $ED$ is parallel to $AC$ we
  have $\angle  BDE = \angle BAC = \alpha$
  and $\angle EDC = \delta-\alpha$.
  By the intercept theorem
  we have $\frac{\abs{EC}}{\abs{BE}} = \frac{\abs{DA}}{\abs{BD}}=m$.
  Furthermore
  \begin{align*}
    \tan(\delta-\alpha) &= \tan{\angle EDC} = \frac{\abs{EC}}{\abs{ED}}, \\
    \tan(\alpha) &=\tan{\angle BDE} = \frac{\abs{BE}}{\abs{ED}}, \\
    \frac{\tan(\delta-\alpha)}{\tan(\alpha)} &=
                                               \frac{\abs{EC}}{\abs{BE}}=m
                                               \geq 1.
  \end{align*}
  Now $\alpha$ and $\delta-\alpha$
  both lie in $(0,\frac{\pi}{2})$ since they appear
  as interior angles of right-angled triangles.
  Since $\tan$ is monotonically increasing on
  $(0,\frac{\pi}{2})$, we have $\delta-\alpha \geq \alpha$.
  Since $\tan$ is also convex on $(0,\frac{\pi}{2})$
  we have
  \begin{align*}
    \frac{\delta-\alpha}{\alpha} \leq \frac{\tan(\delta-\alpha)}{\tan(\alpha)}=m,
  \end{align*}
  which immediately implies the assertion.  
\end{proof}
\begin{figure}
  \begin{center}
\begin{tikzpicture}[scale=1.1]
\usetikzlibrary{angles,quotes,calc}

\coordinate (C) at (4,2);
\coordinate (A) at (0,0);
\coordinate (B) at (5,0);

\def\m{2}
\coordinate (D) at ($(B)!{1/(1+\m)}!(A)$);
\coordinate (E) at ($(B)!{1/(1+\m)}!(C)$);

\draw[thick] (A)--(B)--(C)--cycle;
\draw[thick] (C)--(D);
\draw[dotted,thick] (D)--(E);
\pic[draw,
    angle radius=3mm] {right angle=A--C--B};

\fill (A) circle (1.2pt);
\fill (B) circle (1.2pt);
\fill (C) circle (1.2pt);
\fill (D) circle (1.2pt);
\fill (E) circle (1.2pt);

\node[left] at (A) {$A$};
\node[right] at (B) {$B$};
\node[above] at (C) {$C$};
\node[below] at (D) {$D$};
\node[above right] at (E) {$E$};

\pic[
    draw,
    "$\alpha$",
    angle radius=6mm,
    angle eccentricity=1.4
] {angle=B--A--C};

\pic[
    draw,
    "$\delta$",
    angle radius=6mm,
    angle eccentricity=1.4
] {angle=B--D--C};
\end{tikzpicture}
\end{center}
\caption{The triangle from \Cref{lem:right-angle-split-m}.}
\label{fig:right-triangle}
\end{figure}

The next two lemmas prove the facts about $r\cot(r)$
used in \Cref{lem:y-coty}.
\begin{lem}
  \label{lem:ycoty-bijective}
  The function
\begin{align*}
    \varphi: (0,\pi) \to (-\infty,1), \;\yvar{} \mapsto \yvar{}\cot(\yvar{})
\end{align*}
is bijective.
\end{lem}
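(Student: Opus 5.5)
The plan is to show that $\varphi$ is continuous and strictly decreasing on $(0,\pi)$, and that its limits at the endpoints are $1$ and $-\infty$. Bijectivity onto $(-\infty,1)$ then follows from the intermediate value theorem.

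For the limits: as $\yvar\to 0^+$ we have $\yvar\cot(\yvar)=\cos(\yvar)\cdot\frac{\yvar}{\sin(\yvar)}\to 1$. As $\yvar\to\pi^-$ we have $\cos(\yvar)\to -1$, $\sin(\yvar)\to 0^+$ and $\yvar\to\pi$, so $\varphi(\yvar)\to-\infty$. Continuity on $(0,\pi)$ is clear, since $\sin$ does not vanish there.

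For strict monotonicity, I differentiate:
\begin{align*}
  \varphi'(\yvar)=\cot(\yvar)-\frac{\yvar}{\sin^2(\yvar)}=\frac{\sin(\yvar)\cos(\yvar)-\yvar}{\sin^2(\yvar)}=\frac{\tfrac12\sin(2\yvar)-\yvar}{\sin^2(\yvar)}.
\end{align*}
The numerator is negative for $\yvar>0$ by the elementary inequality $\sin(t)<t$ for $t>0$, applied with $t=2\yvar$. Hence $\varphi'<0$ on $(0,\pi)$, so $\varphi$ is injective.

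Surjectivity follows because a continuous strictly decreasing function on an open interval maps it onto the open interval between its endpoint limits, which here is $(-\infty,1)$. In particular no value of $\varphi$ reaches $1$, since $\varphi$ is strictly below its supremum. There is no real obstacle in this argument; the only point needing care is the sign of the derivative, and it reduces to $\sin(t)<t$.
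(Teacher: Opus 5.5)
Your proof is correct and matches the paper's argument: the same derivative $\varphi'(r)=\cot(r)-r/\sin^2(r)$, its sign reduced to $\sin(2r)<2r$, and surjectivity from continuity plus the endpoint limits $1$ and $-\infty$. You also note explicitly that strict monotonicity keeps every value strictly below $1$, so the codomain $(-\infty,1)$ is correct; the paper leaves this implicit.
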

\begin{proof}
    Since
    \begin{align}
        \cot'(\yvar{})=-\frac{1}{\sin^2(\yvar{})}=-(1+\cot^2(\yvar{}))
    \end{align}
    we have
    \begin{align*}
      \varphi'(\yvar{})=\cot(\yvar{})-\frac{\yvar{}}{\sin^2(\yvar{})}.
    \end{align*}

    From $2\cos(\yvar{})\sin(\yvar{})=\sin(2\yvar{})<2\yvar{}$ for $\yvar{}>0$
    we obtain 
    \begin{align*}
      \cot(\yvar{})<\frac{\yvar{}}{\sin^2(\yvar{})}
    \end{align*}
    and hence $\varphi'(\yvar{})<0$ for $\yvar{} \in (0,\pi)$.
    This implies injectivity of $\varphi$.
    Surjectivity
    follows from the continuity of $\varphi$ together with $\lim_{\yvar{}
      \to \pi} \varphi(\yvar{})=-\infty$
    and 
    \begin{align*}
      \lim_{\yvar{} \to 0} \yvar{}\cot(\yvar{}) = \lim_{\yvar{} \to 0} \frac{\cos(\yvar{})-\yvar{}\sin(\yvar{})}{\cos(\yvar{})}=1.
    \end{align*}
  \end{proof}

\begin{lem}
  \label{lem:lower-bound-fm}
  For every $m\geq 1$ and $\yvar{} \in (0,\pi)$ we have
  \begin{align}
  \label{eq:lower-bound-fm}
    \xvar{} \cot\left(\frac{\xvar{}}{m}\right) - \xvar{} \cot(\xvar{}) \geq m-1.
  \end{align}
\end{lem}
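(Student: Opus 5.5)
The plan is to use the Mittag-Leffler (partial fraction) expansion of the cotangent,
\begin{align*}
  \cot z = \frac{1}{z} + \sum_{k=1}^\infty \frac{2z}{z^2-k^2\pi^2}, \qquad z \in \bbC\setminus \pi\mathbb{Z},
\end{align*}
which converges absolutely and locally uniformly. This reduces the inequality to a termwise comparison.

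First, for $\xvar \in (0,\pi)$ and $m \geq 1$, both $\xvar$ and $\xvar/m$ lie in $(0,\pi)$, so the expansion applies to each. Multiplying by $\xvar$ gives
\begin{align*}
  \xvar\cot(\xvar) &= 1 - \sum_{k=1}^\infty \frac{2\xvar^2}{k^2\pi^2-\xvar^2},\\
  \xvar\cot\left(\frac{\xvar}{m}\right) &= m - \sum_{k=1}^\infty \frac{2\xvar^2 m}{m^2k^2\pi^2-\xvar^2}.
\end{align*}
The second line comes from $\xvar\cdot\frac{m}{\xvar} = m$ and $\xvar\cdot \frac{2\xvar/m}{\xvar^2/m^2-k^2\pi^2} = -\frac{2\xvar^2 m}{m^2k^2\pi^2-\xvar^2}$. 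Subtracting these, which is legitimate since both series converge absolutely, yields
\begin{align*}
  \xvar\cot\left(\frac{\xvar}{m}\right) - \xvar\cot(\xvar) = m-1 + \sum_{k=1}^\infty 2\xvar^2\left(\frac{1}{k^2\pi^2-\xvar^2} - \frac{m}{m^2k^2\pi^2-\xvar^2}\right).
\end{align*}

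It therefore suffices to show that every bracket is nonnegative. Because $\xvar<\pi\le k\pi$ and $m\geq 1$, both denominators are positive. Clearing them, the bracket is nonnegative if and only if $m^2k^2\pi^2-\xvar^2 \geq m(k^2\pi^2-\xvar^2)$. This is equivalent to $m(m-1)k^2\pi^2 \geq (1-m)\xvar^2$, which holds because the left side is $\geq 0$ and the right side is $\leq 0$. Summing over $k$ gives \eqref{eq:lower-bound-fm}, with equality exactly when $m=1$, consistent with the inscribed angle case.

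The only point needing care is justifying the expansion and the termwise subtraction; I expect this to be the main, though mild, obstacle. Since the expansion is classical and absolutely convergent on $(0,\pi)$, I would simply cite it. If a self-contained argument were preferred, the fallback would be to fix $\xvar$ and study $t\mapsto \xvar\cot(\xvar t)$ as a function of $t=1/m\in(0,1]$. This route is less direct, so I would use the series argument.
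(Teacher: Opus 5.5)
Your proof is correct; it reaches the result by a different route from the paper. The paper sets $\psi(r)=\cot(r)-\frac{1}{r}$ on $(0,\pi)$ and uses $\sin r<r$ to get $\psi'(r)=\frac{1}{r^2}-\frac{1}{\sin^2(r)}<0$. Since $\frac{r}{m}\le r$, this gives $\psi(\frac{r}{m})\ge\psi(r)$, and multiplying by $r$ yields the claim.

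Your termwise inequality is the series form of that same monotonicity. In the Mittag-Leffler expansion, $\psi(t)=-\sum_{k\ge1}\frac{2t}{k^2\pi^2-t^2}$. Multiplying your bracket by $2r$ turns its nonnegativity into exactly $\frac{2(r/m)}{k^2\pi^2-(r/m)^2}\le\frac{2r}{k^2\pi^2-r^2}$, which says each summand is increasing in $t$ on $(0,k\pi)$.

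The paper obtains the monotonicity of $\psi$ from a one-line derivative computation that uses only $\sin r<r$, so the appendix stays self-contained. You instead import the partial fraction expansion of $\cot$, which is classical but heavier. In exchange you get an explicit nonnegative series for the defect $r\cot(\frac{r}{m})-r\cot(r)-(m-1)$. That gives strict inequality for $m>1$ and equality exactly at $m=1$ for free. Both arguments work verbatim for real $m\ge1$, and your fallback via $t=1/m$ is not needed.
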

\begin{proof}
   Consider the map $\psi: (0,\pi) \to \bbR$
   defined by $\psi(\xvar):= \cot(\xvar) - \frac{1}{\xvar}$.
   Let $m\geq 1$, $\xvar \in (0,\pi)$.
   The well-known estimate
   $\sin(\xvar)<\xvar$ 
   implies $\psi'(\xvar)=-\frac{1}{\sin^{2}(\xvar)}+\frac{1}{\xvar^2}<0$.
   Therefore $\psi$ is decreasing
   and we get $0\leq \psi(\frac{r}{m})-\psi(r)=\cot(\frac{r}{m})-\frac{m}{r}-\cot(r)+\frac{1}{r}$.
   Multiplying this inequality by $r$ and rearranging gives \eqref{eq:lower-bound-fm}.
\end{proof}
Next we prove the quadratic upper bound for
$\frac{4}{\xvar{}+3}$ used in the proof of \Cref{thm:rational-func-description}.
\begin{lem}
  \label{lem:R2-bound-1}
  $\frac{4}{\xvar{}+3}<\xvar{}^2+3(\xvar{}+2)^2$ for $\xvar{} \in (-2,1)$.
\end{lem}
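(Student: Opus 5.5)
Since $\xvar+3>0$ on $(-2,1)$, I will multiply through by $\xvar+3$ and reduce the claim to positivity of a single cubic on that interval, namely
\begin{align*}
  P(\xvar) := (\xvar+3)\left(\xvar^2+3(\xvar+2)^2\right) - 4 > 0 \quad \text{for } \xvar \in (-2,1).
\end{align*}
Expanding, $\xvar^2+3(\xvar+2)^2 = 4\xvar^2+12\xvar+12$. Hence
\begin{align*}
  P(\xvar) = 4\xvar^3+24\xvar^2+48\xvar+32 = 4(\xvar+2)^3 .
\end{align*}

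The cubic is a perfect cube, so the inequality follows at once: for $\xvar>-2$ we have $(\xvar+2)^3>0$. Dividing back by $\xvar+3>0$ gives the assertion.

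The only point needing care is the expansion $36+12+\dots$. I check it directly: $(\xvar+3)(4\xvar^2+12\xvar+12) = 4\xvar^3+24\xvar^2+48\xvar+36$, and subtracting $4$ gives $4(\xvar^3+6\xvar^2+12\xvar+8)=4(\xvar+2)^3$. So there is no real obstacle. The inequality in fact holds on all of $(-2,\infty)$, and the upper bound $\xvar<1$ is not needed.
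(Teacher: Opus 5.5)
Your proof is correct, but it takes a different route from the paper. The paper sets $f(\xvar)=\xvar^2+3(\xvar+2)^2-\frac{4}{\xvar+3}$ and argues by calculus. It shows that $f''(\xvar)=8-\frac{8}{(\xvar+3)^3}$ is strictly increasing with $f''(-2)=0$. Hence $f'$ is strictly increasing with $f'(-2)=0$, and therefore $f$ is strictly increasing with $f(-2)=0$.

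Your factorization explains why this chain works. Indeed $f(\xvar)=\frac{4(\xvar+2)^3}{\xvar+3}$, so $-2$ is a triple zero, which is why $f$, $f'$ and $f''$ all vanish there.

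Your argument avoids differentiation entirely and gives the exact size of the gap. It also shows directly that the bound holds on all of $(-2,\infty)$, so the restriction $\xvar<1$ is not needed. The paper's approach does not require spotting the perfect cube, at the cost of several derivative computations.
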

\begin{proof}
  Set $f(\xvar{}) = \xvar{}^2+3(\xvar{}+2)^2-\frac{4}{\xvar{}+3}, \xvar{} \in [-2,1]$.  Then
  \begin{align*}
    f'(\xvar{})&=8\xvar{}+12+\frac{4}{(\xvar{}+3)^2}\\
    f''(\xvar{})&=8-\frac{8}{(\xvar{}+3)^3}
  \end{align*}
  $f''$ is strictly increasing on $(-2,1)$
  and $f''(-2)=0$.
  Therefore $f'$ is strictly increasing in $(-2,1)$
  and $f'(-2)=0$.
  This in turn shows that $f$ is strictly
  increasing on $(-2,1)$
  and together with $f(-2)=0$ this proves our claim.
\end{proof}

We finish with two trigonometric calculations used
in \Cref{sec:convexity}.
\begin{lem}
\label{lem:cotm-cotm}
For $m \in \bbN$ and $\alpha \in (0,\frac{\pi}{m+1})$
we have
\begin{align*}
    \cot(\alpha)-\cot((m+1)\alpha) &=\frac{\sin(m\alpha)}{\sin(\alpha)\sin((m+1)\alpha)}.
  \end{align*} 
\end{lem}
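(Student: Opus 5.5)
The identity follows from the addition theorem for the sine. Writing both cotangents over the common denominator $\sin(\alpha)\sin((m+1)\alpha)$ gives
\begin{align*}
  \cot(\alpha)-\cot((m+1)\alpha)
  = \frac{\cos(\alpha)\sin((m+1)\alpha)-\cos((m+1)\alpha)\sin(\alpha)}{\sin(\alpha)\sin((m+1)\alpha)}.
\end{align*}
The numerator is exactly $\sin((m+1)\alpha-\alpha)=\sin(m\alpha)$, by the formula $\sin(u-v)=\sin u\cos v-\cos u\sin v$ with $u=(m+1)\alpha$ and $v=\alpha$. This yields the asserted expression.

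The only remaining point is that every quantity appearing is well defined. For $\alpha\in(0,\frac{\pi}{m+1})$ we have $0<\alpha<(m+1)\alpha<\pi$. Hence $\sin(\alpha)>0$ and $\sin((m+1)\alpha)>0$, so both cotangents and the quotient on the right-hand side make sense. As a byproduct, $\sin(m\alpha)>0$ as well, since $0<m\alpha<\pi$. This positivity is what makes the manipulations in \Cref{prop:mars-in-polar} valid, in particular taking reciprocals and squaring the inequality there.

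I do not expect any genuine obstacle. The whole argument is a one-line trigonometric computation, and the domain restriction on $\alpha$ serves only to exclude zeros of the sines.
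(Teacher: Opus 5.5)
Your proof is correct and is essentially the paper's computation. The paper starts from the cotangent addition formula for $\cot((m+1)\alpha)$, rewrites the difference as $1/\bigl(\sin^2(\alpha)(\cot(m\alpha)+\cot(\alpha))\bigr)$, and finishes with $\sin((m+1)\alpha)=\sin(\alpha)\cos(m\alpha)+\cos(\alpha)\sin(m\alpha)$, whereas you apply the sine subtraction formula directly to the numerator over the common denominator; your version is shorter, avoids $\cot(m\alpha)$ and the division by $\cot(m\alpha)+\cot(\alpha)$, and needs only $\sin(\alpha)\sin((m+1)\alpha)\neq 0$, which you verify.
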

\begin{proof}
The trigonometric identity
  \begin{align*}
    \cot((m+1)\alpha) = \frac{\cot(m\alpha)\cot(\alpha)-1}{\cot(m\alpha)+\cot(\alpha)}
  \end{align*}
  implies
  \begin{align*}
    \cot(\alpha)-\cot((m+1)\alpha) &= \frac{\cot^2(\alpha)+1}{\cot(m\alpha)+\cot
    \alpha}=  \frac{1}{\sin^2(\alpha)(\cot(m\alpha)+\cot
                                     \alpha)} \\
    &=\frac{\sin(m\alpha)}{ \sin(\alpha)\left(\sin(\alpha)\cos(m\alpha) + \cos(\alpha)
      \sin(m\alpha)\right)} \\
                                   &=\frac{\sin(m\alpha)}{\sin(\alpha)\sin((m+1)\alpha)}.
                                   \qedhere
  \end{align*} 
\end{proof}

\begin{lem}
\label{lem:curvature-polar}
  For $m \in \bbN$ define  $\ufunc_m:
  (0,\frac{\pi}{m+1})\to \bbR, \ufunc_m(\alpha) =
  \frac{\sin(m\alpha)}{\sin((m+1)\alpha)}$.
  Then $\ufunc_m''+\ufunc_m>0$.
\end{lem}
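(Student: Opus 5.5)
The plan is to obtain a closed form for $\ufunc_m''+\ufunc_m$ by rewriting $\ufunc_m$ in terms of a single cotangent, and then to show that this closed form is positive.

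Set $n:=m+1\geq 2$ and $c(\alpha):=\cot(n\alpha)$. The addition theorem applied to $\sin(m\alpha)=\sin(n\alpha-\alpha)$ gives
\begin{align*}
\ufunc_m(\alpha)=\frac{\sin(n\alpha)\cos(\alpha)-\cos(n\alpha)\sin(\alpha)}{\sin(n\alpha)}=\cos(\alpha)-\sin(\alpha)\,c(\alpha).
\end{align*}
Differentiating twice, the terms $\pm\cos(\alpha)$ and $\pm\sin(\alpha)c$ cancel, and one obtains
\begin{align*}
\ufunc_m''+\ufunc_m=-2\cos(\alpha)\,c'-\sin(\alpha)\,c''.
\end{align*}
The derivatives of $c$ follow from $\cot'=-(1+\cot^2)$:
\begin{align*}
c'=-n(1+c^2), \qquad c''=2n^2c(1+c^2).
\end{align*}
Substituting these yields
\begin{align*}
\ufunc_m''+\ufunc_m=2n(1+c^2)\bigl(\cos(\alpha)-n\sin(\alpha)\cot(n\alpha)\bigr).
\end{align*}
So the claim reduces to showing $\cos(\alpha)-n\sin(\alpha)\cot(n\alpha)>0$ for $\alpha\in(0,\frac{\pi}{n})$.

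For the positivity, note first that $n\geq 2$ forces $\alpha<\frac{\pi}{2}$, so $\sin(\alpha)>0$ and $\cos(\alpha)>0$. I split into two cases according to the position of $n\alpha$.
\begin{enumerate}[(i)]
\item If $n\alpha\in[\frac{\pi}{2},\pi)$, then $\cot(n\alpha)\leq 0$. The subtracted term is then nonpositive, and the expression is at least $\cos(\alpha)>0$.
\item If $n\alpha\in(0,\frac{\pi}{2})$, multiply by $\sin(n\alpha)>0$ and divide by $\cos(\alpha)\cos(n\alpha)>0$. The condition becomes
\begin{align*}
\tan(n\alpha)>n\tan(\alpha).
\end{align*}
This holds because $\tan$ is strictly convex on $[0,\frac{\pi}{2})$ with $\tan(0)=0$. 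Hence $t\mapsto \tan(t)/t$ is strictly increasing there, which gives $\frac{\tan(n\alpha)}{n\alpha}>\frac{\tan(\alpha)}{\alpha}$.
\end{enumerate}

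I do not expect a real obstacle. The only step requiring care is the differentiation bookkeeping, specifically the cancellation that produces the factorised form above. The factor $2n(1+c^2)$ is evidently positive, so once the factorisation is confirmed the rest is elementary.
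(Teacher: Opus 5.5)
Your proposal is correct and follows essentially the same route as the paper: it also writes $\ufunc_m(\alpha)=\cos(\alpha)-\sin(\alpha)\cot((m+1)\alpha)$, though it gets there via \Cref{lem:cotm-cotm} rather than the subtraction formula. It then reaches the same factorisation $\ufunc_m+\ufunc_m''=2(m+1)\frac{\sin(\alpha)}{\sin^2((m+1)\alpha)}\bigl(\cot(\alpha)-(m+1)\cot((m+1)\alpha)\bigr)$, which equals your $2n(1+c^2)\bigl(\cos(\alpha)-n\sin(\alpha)c\bigr)$, and finishes with the identical case split: convexity of $\tan$ on $(0,\frac{\pi}{2})$ when $n\alpha<\frac{\pi}{2}$, and the sign of $\cot(n\alpha)$ otherwise.
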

\begin{proof}
  Let $m \in \bbN$ and set $\tilde{m}:=m+1$ to simplify notation.
  By \Cref{lem:cotm-cotm} we have
  \begin{align*}
    \ufunc_m(\alpha) &= \cos(\alpha)-\sin(\alpha) \cot(\tilde{m}\alpha), \\
    \ufunc'_m(\alpha) &= -\sin(\alpha)
    -\cos(\alpha)\cot(\tilde{m}\alpha)
    +\tilde{m}\sin(\alpha)(1+\cot^2(\tilde{m}\alpha)),
    \\
    \ufunc''_m(\alpha) &= -\cos(\alpha)+\sin(\alpha)\cot(\tilde{m}\alpha)
                         +\tilde{m}\cos(\alpha)(1+\cot^2(\tilde{m}\alpha))\\
                     &\phantom{=}
                       +\tilde{m}\cos(\alpha)(1+\cot^2(\tilde{m}\alpha))\\
                     &\phantom{=}
                       -2\tilde{m}^2\sin(\alpha)\cot(\tilde{m}\alpha)(1+\cot^2\tilde({m}\alpha)),\\
    \ufunc_m(\alpha)+\ufunc''_m(\alpha) &=
                       2\tilde{m}\frac{\sin(\alpha)}{\sin^2(\tilde{m}\alpha)}
                       (\cot(\alpha)-\tilde{m}\cot(\tilde{m}\alpha)).
  \end{align*}
  Now $\tan$ is convex on $(0,\frac{\pi}{2})$,
  hence for $\alpha \in (0,\frac{\pi}{2\tilde{m}})$ we have
  \begin{align*}
    \frac{1}{\tilde{m}}\tan(\tilde{m}\alpha) > \tan(\alpha), \\
    \tilde{m}\cot(\tilde{m}\alpha) < \cot(\alpha).
  \end{align*}
  For $\alpha \in [\frac{\pi}{2\tilde{m}},\frac{\pi}{\tilde{m}})$
  on the other hand, $\tilde{m}\cot(\tilde{m}\alpha) < \cot(\alpha)$
  holds trivially since the right hand side is positive while the left hand side is nonpositive.
  This shows $u_m(\alpha)+u''_m(\alpha)>0$.
\end{proof}

\end{document}